\newcommand{\sbull}{{\scriptscriptstyle \bullet}}
\DeclareMathOperator{\id}{id}
\DeclareMathOperator{\im}{im}
\DeclareMathOperator{\Image}{im}
\DeclareMathOperator{\GL}{GL}
\DeclareMathOperator{\pr}{pr}
\newcommand{\bl}{{\scriptscriptstyle \bullet}}
\theoremstyle{plain}
\newtheorem{satz}{Satz}[section]		
\newtheorem{theorem}[satz]{Theorem}
\newtheorem{lemma}[satz]{Lemma}
\newtheorem{remark}[satz]{Remark}
\newtheorem{satz/definition}[satz]{Satz/Definition}
\newtheorem{lemma/definition}[satz]{Lemma/Definition}
\newtheorem{theoo}{Theorem}
\theoremstyle{definition}
\newtheorem{definition}[satz]{Definition}
\newtheorem{definiti}[theoo]{Definition}
\newtheorem{probleme}[theoo]{Problem}
\newtheorem*{convention}{Convention}
\newtheorem*{problem2}{Problem}
\setdefaultitem{\textbullet}{-}{}{}		
\title{The Frobenius theorem for Banach distributions on infinite-dimensional manifolds and applications in infinite-dimensional Lie theory}
\author{Jan Milan Eyni}
\date{}
\setdefaultitem{\textbullet}{-}{}{}		
\begin{document}
\maketitle

\thispagestyle{empty}

\begin{abstract}
We prove a Frobenius theorem for Banach distributions on manifolds that are modelled over locally convex spaces. Moreover, we recall how Frobenius theorems can be applied to infinite-dimensional Lie groups and obtain, that given a Lie subalgebra of the Lie algebra of a Lie group that is modelled over a locally convex space that admits an exponential map, the Lie subalgebra is integrable if it is complemented as a topological vector space and a Banach space with the induced topology.
\end{abstract}

{\footnotesize
{\bf Jan Milan Eyni},
Universit\"{a}t Paderborn,
Institut f\"{u}r Mathematik,
Warburger Str.\ 100,
33098 Paderborn, Germany;
\,{\tt janme@math.upb.de}\\[2mm]

\section*{Introduction}
The Frobenius theorem for finite-dimensional manifolds is a classical result of differential geometry. Furthermore the case of Banach manifolds and its applications to Banach Lie groups are well known, and presented for instance in \cite[Chapter 1]{Lang}. However, there are interesting groups that are not Banach Lie groups. For example, it is known that the diffeomorphism group of a compact finite-dimensional manifold is a Lie group modelled over a locally convex space but not a Banach Lie group. To explain what a manifold modelled over a locally convex space is one has to define a differential calculus on locally convex spaces. There are different approaches for such a differential calculus. One popular approach is the convenient calculus of Fr{\"o}licher, Kriegl and Michor (\cite{Kriegel u. Michor}). In this context a global function $f\colon E\rightarrow F$ is called convenient smooth if for all smooth curves $\gamma \colon \mathbb{R} \rightarrow E$ the curve $f\circ \gamma \colon \mathbb{R}\rightarrow F$ is smooth. Another popular approach to $C^k$-maps used in this article, is Keller's $C^k_c$-theory, as popularized in \cite{Milnor}.
\begin{definiti}
If $E$ and $F$ are locally convex spaces and $U\subset E$ is open, then a continuous map $f\colon U \rightarrow F$ is called a $C^1$-map (respectively of class $C^1$) if the directional derivative
\begin{align*}
\lim\limits_{h\rightarrow 0} \frac{f(p+hv)-f(p)}{h} =: df(p,v)
\end{align*}
exists for all $p\in U$ and $v\in E$ and the function $df\colon U\times E \rightarrow F,~(p,v)\mapsto df(p,v)$ is continuous. Moreover, given $r\in \mathbb{N}$ the function $f$ is called a $C^r$-map (respectively of class $C^r$) if $f$ is a $C^1$-map and $df$ is a $C^{r-1}$-map.
\end{definiti}
Now it is clear what a manifold modelled over a locally convex space should be. We simply use the finite-dimensional definition with locally convex spaces instead of finite-dimensional spaces and the $C^k$-differential calculus just described instead of the finite-dimensional one. A Lie group modelled over a locally convex space is a group that is a manifold modelled over a locally convex space such that the group operations are smooth. For more details the reader may consider for example \cite{Milnor} or \cite{Neeb}. There is also a comprehensive book about infinite-dimensional Lie groups in preparation with \cite{Gloeckner u. Neeb}.

In 2006 Neeb stated the following problem in \cite{Neeb}:
\begin{probleme}\label{EinlProb}
Let $G$ be a regular Lie group. Is every closed Lie subalgebra $\mathfrak{h}\subseteq L(G)$  with finite codimension integrable?  
\end{probleme}
Gl{\"o}ckner proposed in \cite{Gloeckner} that generalisations of the Frobenius theorems to manifolds that are modelled over locally convex spaces can be used to solve this problem. 

Hiltunen showed in \cite{Hiltunen} that certain co-Banach distributions for manifolds that are modelled over locally convex spaces in the sense of Fr{\"o}hlicher-Bucher differential calculus, are locally integrable. This means that given a point in the manifold we find an immersed submanifold containing the point whose tangent spaces are exactly the fibres of the given distribution. This immersed submanifolds are called integral manifolds of the distribution. In \cite{Eyni}, we not only used the methods from \cite{Hiltunen} to obtain a similar result in the setting of Keller's $C^k_c$-theory, but it we also constructed a foliation of the manifold consisting of maximal integral manifolds. These maximal integral manifolds and their local parametrisations by certain charts are actually necessary to solve Problem \ref{EinlProb}. This was done in \cite[Chapter 4]{Eyni}. There, methods from \cite[Chapter 1]{Lang} for the case of Banach Lie groups were generalised to the case of Lie groups modelled over locally convex spaces. Also it was possible to correct a certain inaccuracy of Lang in \cite[7, Chapter VI. Lemma 5.3]{Lang} with the help of Gl{\"o}ckner.

In 2001, Teichmann showed a Frobenius theorem for finite-dimensional distributions on manifolds that are modelled on locally convex spaces in the convenient sense. It was possible to obtain the analogous result in the author's master's thesis \cite{Eyni} in the context of manifolds that are modelled over locally convex spaces in the sense of Keller's $C_c^k$-Theory. Moreover, by using the methods from \cite[Chapter 4]{Eyni} we showed that if the Lie group $G$ in question has an exponential map, then  every finite-dimensional Lie subalgebra $\mathfrak{h}\subseteq L(G)$ is integrable.

Now, it is a natural question to ask if every Lie subalgebra $\mathfrak{h}\subseteq L(G)$ that is complemented as a topological vector subspace and is a Banach space with the induced topology is integrable as well. The answer is yes. In this article we will prove the following theorem.
\begin{theoo}
Let $G$ be a Lie group modelled over a locally convex space and $\mathfrak{h}\subseteq L(G)$ be a Lie subalgebra that is complemented as a topological vector subspace and is a Banach space. If $G$ admits an exponential map, then we can find a Lie group $H$ that is a subgroup of $G$ and an immersed submanifold of $G$ such that $L(H)=\mathfrak{h}$.
\end{theoo}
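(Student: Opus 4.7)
The plan is to construct a left-invariant Banach distribution on $G$ whose fibre at the identity is $\mathfrak{h}$, verify that it is involutive, apply the Frobenius theorem proved earlier in this paper to obtain a foliation by maximal integral manifolds, and then exhibit the leaf through the identity as the desired Lie subgroup $H$. Concretely, for $g \in G$ I would set $D_g := T_e L_g(\mathfrak{h})$, where $L_g$ denotes left translation. Since $\mathfrak{h}$ is complemented in $L(G)$ as a topological vector subspace and is a Banach space with the induced topology, and since each $T_e L_g$ is a topological linear isomorphism on $L(G) = T_e G$, the subbundle $D$ is a Banach distribution in the sense required by the Frobenius theorem. Local trivialisations can be obtained by transporting the splitting $L(G) = \mathfrak{h} \oplus \mathfrak{m}$ along left translation, combined with any chart of $G$ around $g$.

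Next I would verify involutivity. Writing $X^{\ell}$ for the left-invariant vector field generated by $X \in L(G)$, the sections $X^{\ell}$ with $X \in \mathfrak{h}$ span $D$ pointwise, and the standard identity $[X^{\ell},Y^{\ell}] = [X,Y]^{\ell}$ together with $[X,Y] \in \mathfrak{h}$ (because $\mathfrak{h}$ is a Lie subalgebra) shows that the bracket of any two sections of $D$ of this form is again a section of $D$. This is precisely the involutivity hypothesis of the Frobenius theorem, so one obtains, through each point $g \in G$, a maximal integral manifold of $D$.

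I would then take $H$ to be the maximal integral manifold through $e$ with its canonical immersed submanifold structure. To see that $H$ is a subgroup, I would argue as in \cite[Chapter~4]{Eyni} and \cite[Chapter~1]{Lang}: for $h \in H$, left translation $L_h$ preserves $D$, hence maps maximal integral manifolds to maximal integral manifolds; since $L_h(e) = h \in H$, uniqueness of the leaf through $h$ forces $L_h(H) = H$, giving $h_1 h_2 \in H$. An analogous argument with inversion (or with the involution obtained by combining $L_h$ with $g \mapsto g^{-1}$) yields closure under inverses. To upgrade $H$ to a Lie group with $L(H) = \mathfrak{h}$, I would use the exponential map: for $X \in \mathfrak{h}$ the curve $t \mapsto \exp(tX)$ is everywhere tangent to $D$, so by the uniqueness statement for integral manifolds it lies in $H$. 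This provides $\exp|_{\mathfrak{h}} \colon \mathfrak{h} \to H$, which in combination with the local parametrisations of maximal integral manifolds from \cite{Eyni} yields a chart around $e$ modelled on the Banach space $\mathfrak{h}$. Transporting this chart by left translation and using smoothness of multiplication in $G$ produces a Lie group structure on $H$ with Lie algebra $\mathfrak{h}$.

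The hard part will be Step~5: making the transition from the purely geometric integral-manifold structure of $H$ to a Lie group structure, and identifying its Lie algebra as $\mathfrak{h}$. In the Banach case, Lang handles this by an inverse function theorem argument that uses the Banach structure of the ambient group. Here $G$ itself need not be Banach, so the inverse function theorem is not directly available; one has to exploit that $\mathfrak{h}$ alone is Banach and that the exponential map of $G$ restricts to a map into $H$. The technical content is to check that $\exp|_{\mathfrak{h}}$ is a local homeomorphism onto a neighbourhood of $e$ in $H$ (in the intrinsic topology of the leaf) and that multiplication becomes smooth in the resulting chart. Both steps are modelled on \cite[Chapter~4]{Eyni}, where the same argument was carried out for finite-dimensional $\mathfrak{h}$; replacing finite dimensions by the Banach hypothesis on $\mathfrak{h}$ and invoking the new Frobenius theorem of the present paper is where the work lies.
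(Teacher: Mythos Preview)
Your overall architecture is right, but there is a genuine gap in how you invoke the Frobenius theorem. The Frobenius theorem proved in this paper (Theorem~\ref{BanachBanach}) does \emph{not} conclude integrability from involutivity alone: in addition to $D$ being an involutive subbundle with Banach typical fibre, it requires, at each point $p_0$, a parametrised vector field $X\colon U\times F\to TU$ satisfying conditions (\ref{BanachBanach1})--(\ref{BanachBanach4}), and in particular condition (\ref{BanachBanach4}) that $X$ \emph{provides a local flow with parameters}. In a locally convex setting flows are not automatic, and this hypothesis cannot be dropped. You verify only involutivity and then claim ``This is precisely the involutivity hypothesis of the Frobenius theorem,'' which is not sufficient.

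This is exactly where the exponential map enters in the paper, and it enters earlier than you think. One takes $X(g,v)=T\lambda_g(v)$; conditions (\ref{BanachBanach1})--(\ref{BanachBanach3}) are clear, and condition (\ref{BanachBanach4}) is fulfilled by the explicit flow $\Phi(t,g,v)=g\cdot\exp_G(tv)$, since $\tfrac{d}{dt}\big|_{t=0}\Phi(t,g,v)=T\lambda_g(v)$. With this in hand Theorem~\ref{BanachBanach} applies and $D$ is a Frobenius distribution. Consequently your ``hard part'' (Step~5) is not hard at all and does not need the exponential map, an inverse function theorem, or any special Banach argument: once $D$ is a Frobenius distribution, Lemma~\ref{Anwendung6} produces the Lie group $H$ purely from the Frobenius charts, exactly as in \cite[Chapter~4]{Eyni}, and that lemma is insensitive to whether the fibre is finite-dimensional or Banach. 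So the exponential map is used once, to supply the flow hypothesis of the Frobenius theorem, and not again.
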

The main work will be to show a Frobenius theorem for Banach distributions for manifolds that are modelled over locally convex spaces.

The structure of the article is as follow. In Section \ref{Vector distributions of infinite-dimensional manifolds} we recall constructions for distributions on infinite-dimensional manifolds. These results were already stated in \cite{Eyni}. They are mainly easy generalisations of the well known finite-dimensional case from \cite{Warner}. Nevertheless, these constructions are recalled for the convenience of the reader.

The heart of the article, which contains the new results, is Section \ref{Frobenius theorem}. Here, we show a Frobenius theorem for certain Banach distributions on manifolds that are modelled over locally convex spaces. Besides new arguments we use methods from the case where the distribution in question is finite-dimensional (\cite{Teichmann} respectively \cite{Eyni}). Also, we use methods developed in \cite{Stefan}, where Chillingwoth and Stefan work with singular distributions on Banach manifolds. The following theorem (details of which will be explained later) will be obtained there.
\begin{theoo}\label{BanachBanachEinl}
Let $M$ be a $C^r$-manifold modelled over a locally convex space $E$ with $r\geq 2$ and $F$ be a complemented subspace of $E$ such that $F$ is a Banach space and $D$ be an involutiv subbundle of $TM$ with typical fibre $F$. If for $p_0\in M$ there exists an open $p_0$-neighbourhood $U\subseteq M$ and a $C^{r-1}$-vector field $X \colon U\times F \rightarrow TU$  with parameters in $F$ such that
\begin{compactenum}
\item the map $\check{X}\colon F \rightarrow \mathcal{V}(U),~ v\mapsto X(\bl,v)$ is linear,\label{BanachBanach1}
\item we have $\im(X) \subseteq D$,\label{BanachBanach2}
\item the map $X(p_0,\sbull)|^{D_{p_0}} \colon F \rightarrow D_{p_0}$ is an isomorphism of topological vector spaces and\label{BanachBanach3}
\item the vector field $X$ provides a local flow with parameters,\label{BanachBanach4}
\end{compactenum}
then $D$ is a Frobenius distribution. 
\end{theoo}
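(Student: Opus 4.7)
The plan is to produce integral manifolds of $D$ through $p_0$ by parametrising them via the flow of $X$, and then to use involutivity to verify that the candidate leaves are tangent to $D$ everywhere.

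\textbf{Construction of candidate leaves.} Choose a chart at $p_0$ realising the splitting $E = F \oplus G$ in which $D_{p_0}$ corresponds to $F \times \{0\}$; after precomposing $\check X$ with the inverse of the isomorphism of hypothesis (\ref{BanachBanach3}), I may assume $X(p_0,v) = (v,0)$. Let $N := \{0\} \times G_0$ be a transversal slice for a small open $0$-neighbourhood $G_0 \subseteq G$, and for $(v,q)$ in a sufficiently small neighbourhood of $(0, p_0) \in V_F \times N$ put
\begin{align*}
\Phi(v, q) := \phi_1^{X(\sbull, v)}(q),
\end{align*}
using the parametric flow of hypothesis (\ref{BanachBanach4}). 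Linearity of $\check X$ yields the time-rescaling identity $\Phi(tv, q) = \phi_t^{X(\sbull, v)}(q)$, giving $\partial_v\Phi(0, p_0)(w) = X(p_0, w) = (w,0)$ and $\partial_q\Phi(0, p_0) = \id_N$; hence $v \mapsto \Phi(v, q)$ is a $C^{r-1}$-immersion for $q$ close to $p_0$, and $L_q := \Phi(V_F \times \{q\})$ is an immersed $C^{r-1}$-submanifold parameterised by $F$.

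\textbf{Tangent spaces and involutivity.} To show $L_q$ is an integral manifold, one must verify $\partial_v\Phi(v, q)(F) = D_{\Phi(v, q)}$. Writing $Y_v := X(\sbull, v)$, the rescaling identity already handles the radial direction: $\partial_v\Phi(v, q)(v) = Y_v(\Phi(v, q)) \in D$ by hypothesis (\ref{BanachBanach2}). For a general $w \in F$, differentiating the flow of $Y_{v+sw} = Y_v + s Y_w$ with respect to $s$ at $s = 0$ gives, via variation of parameters, the formula
\begin{align*}
\partial_v\Phi(v, q)(w) = \int_0^1 T\phi_{1-s}^{Y_v}\bigl(Y_w(\phi_s^{Y_v}(q))\bigr)\, ds,
\end{align*}
which is interpreted as a Bochner integral into the Banach fibre $D_{\Phi(v,q)}$. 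The desired inclusion $\partial_v\Phi(v, q)(F) \subseteq D_{\Phi(v,q)}$ therefore reduces to the assertion that the flow $\phi_t^{Y_v}$ maps fibres of $D$ into fibres of $D$; equality then follows because both sides are topologically isomorphic copies of $F$.

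\textbf{Main obstacle: flow-invariance of $D$.} The crux is thus the lemma that an involutive Banach subbundle is preserved by the flows of its own sections. Following the strategy of \cite{Stefan} and \cite[Chapter~1]{Lang}, I set $\tilde D_t := T\phi_{-t}^{Y_v}\bigl(D_{\phi_t^{Y_v}(q)}\bigr) \subseteq T_q M$ and express $\tilde D_t$ in graph form $D_q \to G$ (using that $D_q$ is Banach-complemented in $T_qM$ with complement isomorphic to $G$); differentiating in $t$ translates into a linear inhomogeneous ODE on the Banach space $\mathcal{L}(D_q, G)$ for the graphing operators. The driving term of this ODE is the Lie derivative $\mathcal{L}_{Y_v}D$, which lies in $\Gamma(D)$ by involutivity, so the zero graph is a solution; uniqueness of solutions to Banach-space linear ODEs forces $\tilde D_t \equiv D_q$. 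Crucially, although $M$ is modelled on a locally convex space, this ODE lives in a genuine Banach space, which is precisely why the Banach-space hypothesis on $F$ can be leveraged and the classical Banach Frobenius machinery imported. Once flow-invariance is established, the integrand in the formula above is $D$-valued, the leaves $L_q$ are integral manifolds, and $\Phi$ exhibits $D$ as a Frobenius distribution.
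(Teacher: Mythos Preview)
Your overall architecture matches the paper's: reduce to a chart, prove flow-invariance of $D$ under the flows $\phi_t^{Y_v}$, derive a variation-of-parameters formula for $\partial_v\Phi$, and conclude the leaves are integral manifolds. But the argument breaks at the step you flag as the crux.

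\textbf{The ODE does not live in a Banach space.} You write that the graphing operators satisfy an ODE in $\mathcal{L}(D_q,G)$ and assert this is ``a genuine Banach space.'' It is not: $G$ is the topological complement of $F$ in the locally convex space $E$, and there is no reason for $G$ to be Banach (indeed, if $E$ is not Banach and $F$ is finite-dimensional, $G$ certainly is not). So you cannot invoke Banach ODE uniqueness here, and this is exactly the difficulty that separates the present theorem from the classical Banach Frobenius theorem. The paper handles this by setting up the ODE differently: it tracks $A(t):=\big(w\mapsto (\Phi_t^v)^\ast X_w(x)\big)\in \mathcal{L}(F,E)$ and computes, using involutivity, that $A'(t)=A(t)\circ B(t)$ with $B(t)\in\mathcal{L}(F)$. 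The space $\mathcal{L}(F,E)$ is still not Banach, but the special right-composition structure allows one to push the equation forward along $\pi_q\colon E\to E_q:=E/q^{-1}(0)$ for every continuous seminorm $q$; each $\mathcal{L}(F,E_q)$ \emph{is} Banach (both $F$ and $E_q$ are normed and complete), uniqueness holds there, and since the seminorms separate points one gets uniqueness in $\mathcal{L}(F,E)$ (this is the paper's Lemma~\ref{Eindeutigkeit1}). Because the ODE visibly has a solution staying in $\mathcal{L}(F,D_x)\cong\mathcal{L}(F)$, uniqueness forces $A$ to stay there, which is flow-invariance. Your graph-map formulation does not have this right-composition structure and cannot be projected in the same way.

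\textbf{A second gap: producing the Frobenius chart.} Even granting flow-invariance, you only argue that each $v\mapsto\Phi(v,q)$ is an immersion. A Frobenius chart requires $\Phi\colon V_F\times N\to M$ to be a $C^r$-diffeomorphism onto an open set, and you compute $d\Phi(0,p_0)=\id_E$ as if that sufficed. In a locally convex space there is no general inverse function theorem; one needs the implicit function theorem with Banach parameters (the paper's Theorem~\ref{SatzUeberUKF}, from \cite{Gloeckner3}), applied so that the direction being inverted is the $F$-direction. The paper does this carefully via the auxiliary map $b(z,w,v)=\Phi(t,(z,v),w)$, checking that $d_2 b_2(0,0,0;\sbull)\in\GL(F)$ and invoking Theorem~\ref{SatzUeberUKF} to obtain the chart; a nontrivial estimate on $d_3\Phi(t,0,0;\sbull)$ is needed to verify this hypothesis. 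Your write-up skips this entirely.
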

In Theorem \ref{BanachBanachEinl}, we have to assume that the vector field admits a local flow, because this is not automatic for initial value problems in locally convex spaces. Indeed, it is possible to find linear initial value problems in locally convex spaces that have several solutions, or no solution at all.

In Section \ref{LieTheory}, we recall the constructions from \cite{Eyni} that we used to solve the Problem~\ref{EinlProb}. The only thing that is new in this section is Theorem \ref{satzBanach} and its proof. In \cite{Eyni}, we generalised the results of \cite[Chapter 1]{Lang} from the case of Banach Lie groups to Lie groups that are modelled over locally convex spaces. We recall these results and constructions for the convenience of  the reader.

In the appendix we describe the theory of flows of vector fields on infinite-dimensional manifolds. As mentioned above, we do not have a solution theory for initial value problems, so that it is necessary  to assume the existence of local flows, to construct the global one. All results except of the Lemmas \ref{W} and \ref{EinD}, that come from \cite{Gloeckner u. Neeb}, are straightforward generalisations of the case of Banach manifolds, as presented in \cite[Chapter 1]{Lang}.

\section{Vector distributions of infinite-dimensional manifolds}\label{Vector distributions of infinite-dimensional manifolds}

The definitions, theorems and proofs of this section were already stated in \cite[Chapter 1]{Eyni}. So there are no new results in this section. We only present them for the convenience of the reader. Also, we mention that the results from \cite[Chapter 1]{Eyni} are mainly based on \cite[Chapter 1]{Warner}. But where Warner only discussed finite-dimensional manifolds, we generalised the arguments to infinite-dimensional manifolds modelled over locally convex spaces in the straightforward way.

\begin{convention}
Throughout this section $E$ will be a locally convex space, $r \in \mathbb{N}$ and $M$ a $C^r$-manifold modelled over $E$.
\end{convention}

\begin{definition}
A subset $D\subseteq TM$ is called \emph{vector distribution} or just {\it distribution} of $M$, if for every point $p\in M$ the set $D_p:=D\cap T_pM$ is a subspace of $T_pM$. Important examples for vector distributions are subbundles of $TM$. 
\end{definition}

\begin{definition}
A subset $N\subseteq M$ is called \emph{immersed submanifold} of $M$, if it is a $C^r$-manifold modelled over a closed subspace $F$ of $E$ such that the inclusion $\iota_N^M\colon N\rightarrow M$, $p\mapsto p$ is continuous and given $p\in N$ we find a chart $\varphi\colon U_\varphi\rightarrow V_\varphi$ of $N$ around $p$ and a chart $\psi\colon U_\psi\rightarrow V_\psi$ of $M$ around $p$ such that $U_\varphi\subseteq U_\psi$ and $\psi\circ\iota_N^M\circ \varphi^{-1}=\iota_F^{E}|_{V_\varphi}$.
\end{definition}

\begin{definition}\label{IM-Def}
Let $F\subseteq E$ be a closed subspace of $E$ and $D\subseteq TM$ be a subbundle of $TM$ with typical fibre $F$. A connected immersed submanifold $N\subseteq M$ is called \emph{integral manifold} for $D$, if $T_pM=D_p$ for every $p\in N$. Given $p_0\in M$, we call an integral manifold $N$ containing $p_0$ \emph{maximal} if every other integral manifold $L$ of $D$ that contains $p_0$ is a subset of $N$ and the inclusion map $\iota\colon L\hookrightarrow N$, $p\mapsto p$ is of class $C^r$.
\end{definition}

\begin{remark}\label{Eindeutigkeit}
In the situation of Definition \ref{IM-Def} the maximal integral manifold that contains $p_0$ is obviously unique, whence the ``maximal integral manifold'' is in fact a largest one.
\end{remark}

\begin{definition}\label{Vektordistributionen1}
Let $F\subseteq E$ be a closed subspace and $D\subseteq TM$ be a subbundle of $TM$ with typical fibre $F$. Assume that $F$ is complemented in $E$ say $E=F\oplus H$ topologically with vector subspace $H$ of $E$. A chart $\varphi\colon U_\varphi\rightarrow V_\varphi$ of $M$ is called a \emph{Frobenius chart} for $D$, if there are open sets $V_1\subseteq F$ and $V_2\subseteq H$ such that $V_\varphi=V_1\times V_2$ and for $\overline{y}\in V_2$ the submanifold 
\begin{align}\label{S_y}
S_{\overline{y}}:=\{\varphi^{-1}(x,\overline{y}):x\in V_{1}\}
\end{align}
is an integral manifold of $D$. If $M$ admits an atlas of Frobenius charts for $D$, we call $D$ a \emph{Frobenius distribution}.
\end{definition}

\begin{lemma}\label{Vektordistributionen2}
Let $F\subseteq E$ be a closed subspace of $E$ and $D\subseteq TM$ be a subbundle of $TM$ with typical fibre $F$. Assume that $F$ is complemented in $E$ with topological vector complement $H\subseteq E$. If $\varphi\colon U_\varphi\rightarrow V_\varphi$ is a Frobenius chart where $V_\varphi=V_1 \times V_2$ with open sets $V_1 \subseteq F$ and $V_2 \subseteq H$ and $L\subseteq M$ is an integral manifold for $D$ with $L\subseteq U_\varphi$, then there exists a $\overline{y}\in V_2$ such that $L\subseteq S_{\overline{y}}$ for $S_{\overline{y}}$ like in (\ref{S_y}). 
\end{lemma}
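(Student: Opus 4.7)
The plan is to transport the problem into the Frobenius chart $\varphi$ and exploit the fact that, in these coordinates, the distribution $D$ has a constant ``horizontal'' description; then the $H$-component of $\varphi$ restricted to $L$ will have vanishing derivative, and connectedness of $L$ will force it to be constant.

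First I would observe that the assumption on $\varphi$ already pins down $D$ completely inside $U_\varphi$. For every $\bar{y}\in V_2$ and every $p\in S_{\bar{y}}$, the submanifold $S_{\bar{y}}$ is an integral manifold of $D$ with typical fibre $F$, so
\begin{align*}
D_p = T_p S_{\bar{y}} = (d\varphi(p))^{-1}(F\times \{0\}).
\end{align*}
Since every point of $U_\varphi$ lies on a unique such $S_{\bar{y}}$, the equality $d\varphi(p)\cdot D_p = F\times \{0\}$ holds throughout $U_\varphi$.

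Next I would introduce the $C^r$-map
\begin{align*}
f\colon L \longrightarrow H,\qquad f := \pi_H\circ \varphi \circ \iota_L^M,
\end{align*}
where $\pi_H\colon F\oplus H\to H$ denotes the topological projection; this composition is $C^r$ because $L\subseteq U_\varphi$, the inclusion $\iota_L^M$ is of class $C^r$ and $\varphi$ is a chart. For any $q\in L$ and any tangent vector $v\in T_q L = D_q$ one has $d\iota_L^M(q)\cdot v\in D_q$ (using that $L$ is an integral manifold so that the inclusion maps the tangent space of $L$ identically onto $D_q$), hence
\begin{align*}
df(q)\cdot v = \pi_H\bigl(d\varphi(q)\cdot d\iota_L^M(q)\cdot v\bigr) \in \pi_H(F\times \{0\}) = \{0\}.
\end{align*}
Thus $df\equiv 0$ on $L$.

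Finally, since $L$ is connected by the definition of an integral manifold (Definition \ref{IM-Def}) and $f$ is $C^r$ with identically vanishing differential, standard calculus in locally convex spaces (applied in charts of $L$, where $f$ is represented by a $C^r$-map between open sets of locally convex spaces with zero derivative, hence locally constant) yields that $f$ is constant. Writing $\bar{y}\in V_2$ for its unique value gives $\varphi(L)\subseteq V_1\times \{\bar{y}\}$, i.e.\ $L\subseteq S_{\bar{y}}$, as claimed. The only mildly subtle point—and hence the main step to handle with care—is the identification $d\varphi\cdot D = F\times\{0\}$ on all of $U_\varphi$, because it requires one to use the integral-manifold property of each slice $S_{\bar{y}}$ together with the fact that $D$ and the tangent bundles of the slices share the same typical fibre $F$; once this is established, the rest is a routine connectedness argument.
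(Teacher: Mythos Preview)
Your proof is correct and follows essentially the same route as the paper's: both show that the $H$-component $\pr_2\circ\varphi$ (your $f$) has vanishing differential on $L$ by using $T_qL=D_q=T_qS_{\bar y}$ for the slice through $q$, and then invoke connectedness of $L$. You spell out the intermediate identification $d\varphi(p)\cdot D_p=F\times\{0\}$ on $U_\varphi$ more explicitly than the paper does, but the argument is otherwise identical.
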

\begin{proof}
Let $p\in L$ and $\overline{y}:=\pr_{2}\circ\varphi(p)$. To show $L\subseteq S_{\overline{y}}$ we proof that $\pr_{2}\circ\varphi|_{L}$ is constant. The integral manifold $L$ is connected, hence it is enough to show $d(\pr_{2}\circ \varphi)|_{T_{q}L})=0$ for all $q\in L$. From $T_{q}L=D_{q}=T_{q}S_{\overline{y}}$ we conclude $d(\pr_{2}\circ \varphi)|_{T_{q}L}=d(\pr_{2}\circ \varphi)|_{T_{q}S_{\overline{y}}}=0$.
\end{proof}

\begin{lemma}\label{IUMF}
If $F$ is a complemented subspace of $E$ and $N$ is an immersed submanifold of $M$ modelled over $F$, then for each point $p_0\in N$ we find a submanifold $S$ of $M$ such that $S$ contains $p_0$ and is open in $N$.
\end{lemma}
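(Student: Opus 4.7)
The plan is to extract from the definition of an immersed submanifold a single chart of $M$ at $p_0$ whose $F$-slice already sits inside $N$, and take $S$ to be that slice.

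By the definition of an immersed submanifold I pick, at $p_0$, a chart $\varphi\colon U_\varphi\to V_\varphi$ of $N$ and a chart $\psi\colon U_\psi\to V_\psi$ of $M$ satisfying $U_\varphi\subseteq U_\psi$ and $\psi\circ\iota_N^M\circ\varphi^{-1}=\iota_F^E|_{V_\varphi}$. Fix a topological decomposition $E=F\oplus H$ and identify $F$ with $F\times\{0\}\subseteq E$. A first observation is $\psi(U_\varphi)=V_\varphi\subseteq F$; in particular $\psi(p_0)\in V_\varphi\subseteq F$. Since $V_\psi\subseteq E$ is open and contains $V_\varphi$, I can find an open neighbourhood $U_0\subseteq F$ of $\psi(p_0)$ with $U_0\subseteq V_\varphi$ and an open $0$-neighbourhood $W_0\subseteq H$ such that $U_0\times W_0\subseteq V_\psi$.

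Now set $U:=\psi^{-1}(U_0\times W_0)$ and
\[
S:=\psi^{-1}(U_0\times\{0\}).
\]
The restriction $\psi|_U\colon U\to U_0\times W_0$ is a chart of $M$ around $p_0$ under which $S$ is the trace of the complemented subspace $F\times\{0\}$; this is exactly a submanifold chart, so $S$ is a submanifold of $M$ modelled over $F$, and $p_0\in S$ since $\psi(p_0)\in U_0$. To verify $S\subseteq N$, take $q\in S$; then $\psi(q)\in U_0\subseteq V_\varphi$, hence $\tilde q:=\varphi^{-1}(\psi(q))\in U_\varphi\subseteq N$, and the chart relation yields $\psi(\tilde q)=\iota_F^E(\psi(q))=\psi(q)$, so injectivity of $\psi$ gives $q=\tilde q\in N$. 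Finally $\varphi(S)=U_0$ is open in $V_\varphi\subseteq F$, hence $S=\varphi^{-1}(U_0)$ is open in $U_\varphi$ and thus open in $N$.

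I do not anticipate any serious obstacle: the argument is essentially a book-keeping exercise combining the chart compatibility from the immersed submanifold definition with the topological splitting $E=F\oplus H$. The two steps requiring minimal care are the shrinking that produces the product neighbourhood $U_0\times W_0\subseteq V_\psi$ (which uses openness of $V_\psi$ in the product topology on $F\oplus H$) and the appeal to injectivity of $\psi$ to exclude points of $S$ lying outside $N$.
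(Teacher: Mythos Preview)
Your proof is correct and follows essentially the same strategy as the paper: pick the compatible charts from the immersed-submanifold definition, shrink the $M$-chart to a product neighbourhood in $F\oplus H$, and take the $F$-slice as $S$. Apart from swapped roles of the symbols $\varphi$ and $\psi$ and slightly more explicit bookkeeping (your use of injectivity of the $M$-chart to verify $S\subseteq N$), the argument matches the paper's.
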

\begin{proof}
Let $\varphi\colon U_\varphi\rightarrow V_\varphi$ be a chart of $M$ around $p_0$, with $\varphi(p_0)=0_E$ and $\psi\colon U_\psi\rightarrow V_\psi$ a chart of $N$ around $p_0$ such that $U_{\psi}\subseteq U_\varphi$ and $\varphi\circ \iota_N^M\circ\psi^{-1}=\iota_F^E|_{V_\psi}$. Hence $\varphi\circ\psi^{-1}=\iota_F^E|_{V_\psi}$ and so
\begin{align}
\varphi|_{U_\psi}=\iota_F^E|_{V_\psi}\circ \psi.
\label{GL_iota}
\end{align}
Let $H$ be the topological vector complement of $F$ in $E$. We assume that there exists an open subset $V_\varphi^{1}$ of $F$ and an open subset $V_\varphi^{2}$ of $H$ such that $V_\varphi=V_\varphi^{1}\times V_\varphi^{2}$. With (\ref{GL_iota}) we conclude $\varphi(U_\psi)=V_\psi\times \{0\}$ and get
\begin{align}
U_\psi=\varphi^{-1}(V_\psi\times \{0\}).
\label{GLiota2}
\end{align}
Moreover (\ref{GL_iota}) leads to $V_\psi\subseteq V_\varphi^{1}$ and we assume $V_\psi=V_\varphi^{1}$. With (\ref{GLiota2}) we conclude $S:=\{\varphi^{-1}(x,0):x\in V_\varphi^{1}\}=U_\psi$ whence $S$ has the required properties.
\end{proof}

\begin{theorem}
Let $F$ be a closed subspace of $E$ and $D\subseteq TM$ be a subbundle of $TM$ with typical fibre $F$. If $D$ is a Frobenius distribution, then given $p_0\in M$ there exists a maximal integral manifold that contains $p_0$. According to Remark \ref{Eindeutigkeit} this maximal integral manifold is unique.
\label{Existenz-MIM}
\end{theorem}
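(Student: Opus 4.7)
The plan is to build $N$ as the set of all points reachable from $p_0$ by a finite chain of plaques $S_{\bar{y}}$ of Frobenius charts, topologise it using the plaques as open sets, and verify that $N$ carries the structure of an immersed submanifold whose tangent bundle is $D|_N$. Concretely, I say that $p \sim q$ if there exist a finite sequence of Frobenius charts $(\varphi_i \colon U_{\varphi_i}\to V_i^1\times V_i^2)_{i=1,\dots,n}$ and elements $\bar y_i\in V_i^2$ such that $p\in S_{\bar y_1}$, $q\in S_{\bar y_n}$ and $S_{\bar y_i}\cap S_{\bar y_{i+1}}\neq \emptyset$ for $1\le i<n$. This is an equivalence relation, and I let $N$ be the equivalence class of $p_0$.

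Next I would equip $N$ with the initial topology generated by the plaques: a set is open in $N$ iff it is, in every Frobenius chart whose domain meets $N$, a union of plaques $S_{\bar y}$. Each plaque $S_{\bar y}$ with $S_{\bar y}\subseteq N$ becomes an open subset of $N$, and the map $\pr_1\circ\varphi|_{S_{\bar y}}\colon S_{\bar y}\to V_1\subseteq F$ is a homeomorphism onto an open subset of $F$, hence a chart of $N$. The transition maps are $C^r$: if two such charts come from Frobenius charts $\varphi,\psi$ whose plaques overlap, then in the Frobenius chart $\psi$, the inclusion of a $\varphi$-plaque is an integral manifold contained in $U_\psi$, so by Lemma \ref{Vektordistributionen2} it lies inside a single $\psi$-plaque, which reduces the chart change to the restriction of $\psi\circ\varphi^{-1}$ to $F\times\{\bar y\}$, a $C^r$-map. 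Connectedness of $N$ follows from the chaining definition, and from the construction $T_pN=D_p$ for each $p\in N$, so $N$ is an integral manifold through $p_0$.

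For maximality, let $L$ be any integral manifold through $p_0$. To see $L\subseteq N$ as sets, cover $L$ by pieces that fall into single plaques: for any $q\in L$, choose a path in $L$ from $p_0$ to $q$, cover its image by finitely many Frobenius chart domains $U_{\varphi_i}$ and take connected pieces of $L\cap U_{\varphi_i}$; each such piece is an integral manifold contained in $U_{\varphi_i}$, hence by Lemma \ref{Vektordistributionen2} sits in a single plaque. Chaining these plaques yields $q\sim p_0$, so $q\in N$.

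The subtle step, and I expect the main obstacle, is showing that the inclusion $\iota\colon L\hookrightarrow N$ is of class $C^r$, since a priori the intrinsic topology on $L$ may be strictly finer than the one induced from $N$. Here I would argue pointwise: given $q\in L$, apply Lemma \ref{IUMF} to find a submanifold $S$ of $M$ with $q\in S$ that is open in $L$. By shrinking, $S$ can be taken to be a plaque of some Frobenius chart $\varphi$ of $M$ around $q$, and this very plaque is by construction an open submanifold of $N$. Thus in a neighbourhood of $q$ the inclusion $\iota$ is the identity between $S$ viewed as an open submanifold of $L$ and $S$ viewed as an open submanifold of $N$, and both differentiable structures are pulled back from $\varphi$; hence $\iota$ is $C^r$ near $q$. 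Uniqueness of the maximal integral manifold has already been recorded in Remark \ref{Eindeutigkeit}, completing the proof.
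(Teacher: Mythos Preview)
Your construction is essentially the paper's: build the leaf through $p_0$ out of plaques of Frobenius charts, topologise so that plaques are open, and use the plaque projections as charts. The paper's only variation is to define the candidate set $K$ as all points reachable from $p_0$ by piecewise $C^r$-curves whose velocities lie in $D$; this yields the same set (plaques are path-connected, and conversely any such curve is covered by finitely many plaques via Lemma~\ref{Vektordistributionen2}), but it makes the inclusion $L\subseteq K$ for an arbitrary integral manifold $L$ immediate, since $L$ is connected and every $C^r$-path in $L$ already has velocity in $D$.

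Two points in your write-up need tightening. First, your description of the topology is not what you want: demanding that an open set be a union of \emph{full} plaques in every Frobenius chart would prevent proper open subsets of a plaque from being open, contradicting your claim that $\pr_1\circ\varphi|_{S_{\bar y}}$ is a homeomorphism. The correct choice (and the one the paper uses) is the \emph{final} topology with respect to the inclusions $S_{\bar y}\hookrightarrow N$, i.e.\ $U\subseteq N$ is open iff $U\cap S_{\bar y}$ is open in each plaque with its $M$-subspace topology; one then verifies via Lemma~\ref{Vektordistributionen2} that every plaque is itself open in $N$. Second, in the last step you cannot ``shrink $S$ to be a plaque'': the plaque is determined by the chart, not by $L$. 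What works (and what the paper does) is to take the connected component $(S\cap U_{\varphi})_q$, which by Lemma~\ref{Vektordistributionen2} lies \emph{inside} a single plaque $S_{\bar y}$; then $\iota|_{(S\cap U_\varphi)_q}^{S_{\bar y}}$ is the restriction of $\id_M$ between two submanifolds of $M$ and is therefore $C^r$.
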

\begin{proof}
Because $D$ is a Frobenius distribution, $F$ is complemented in $E$ with a topological vector complement $H$. For each $p\in M$ we can find a Frobenius chart $\phi_p\colon U_{\phi_p}\rightarrow V_{\phi_p}=V_{\phi_p}^{(1)}\times V_{\phi_p}^{(2)}$ where $V_{\phi_p}^{(1)}$ is open in $F$ and $V_{\phi_p}^{(2)}$ is open in $H$ and $\phi_p(p)=0$. The submanifolds $S_p:=\{\phi_{p}^{-1}(x,0):x\in V^{(1)}_{\phi_{p}}\}$ are integral manifolds of $D$. We now write $K$ for the set of all points for which we can find a piece wise $C^r$-curve from $p_0$ to $p$ such that its derivative at each time lies in $D$. Given $p\in K$ also $S_p\subseteq K$. We give $K$ the final topology with respect to the inclusions $\iota_{S_{p}}^{K}\colon S_{p}\hookrightarrow K$ for $p\in K$. 

We show that every open set $U\subseteq S_p$ with respect to the induced topology from $M$ is also open in $K$. To this end let $p\in K$ and $U\subseteq S_p$ be open. Let $q\in K$ be another point in $K$. We show that $U\cap S_q$ is open in $S_q$. 

We assume $U\cap S_q\neq \emptyset$. Let $\overline{p}\in U\cap S_q$ (especially $\overline{p} \in S_p$). There is an open subset $U^M$ of $M$ with $U=S_{p}\cap U^{M}$. The set $U_2:=S_q\cap U^M$ is open in $S_q$ and contains $\overline{p}$. We write $(U_2\cap U_{\phi_p})_{\overline{p}}$ for the connected component of $\overline{p}$ in $U_2\cap U_{\phi_p}$. Because $S_q$ is an integral manifold of $D$, also $(U_2\cap U_{\phi_p})_{\overline{p}}$ is an integral manifold of $D$. Using Lemma \ref{Vektordistributionen2}, $(U_2\cap U_{\phi_p})_{\overline{p}} \subseteq U_{\phi_p}$ and $\overline{p}\in U_2$ we conclude $(U_2\cap U_{\phi_p})_{\overline{p}} \subseteq S_p$. Hence $U_{2}\cap S_{p} \supseteq (U_{2}\cap U_{\phi_{p}})_{\overline{p}}$. And thus $U\cap S_{q}=U^{M}\cap S_{p}\cap S_{q}=S_{p}\cap U_{2} \supseteq (U_{2}\cap U_{\phi_{p}})_{\overline{p}}$. 

The set $U\cap S_{q}$ is open in $S_q$, because $U_{\phi_p}$ is open in $M$, $U_2$ is open in $S_q$ and $\overline{p} \in S_q\cap U$ was arbitrary. We have shown that for each $p\in K$ every open set $U\subseteq S_p$ is also open in $K$. 

The set $\left\{ {\pr}_{1}\circ\phi_{p}|_{S_{p}}:p\in K\right\}$ is a $C^r$-atlas for $K$, because $\iota_{S_p}^K$ is a topological embedding with open image. We make $K$ a $C^r$-manifold using the corresponding maximal $C^r$-atlas. 

Because $K$ is path connected, $K$ is also connected. For every $p\in K$ we get $\phi_{p}\circ\iota_{K}^{M}\circ(\phi_{p}|_{S_{p}})^{-1}=\iota_{F}^{E}$. This implies that $K$ is an immersed submanifold of $M$. Since $S_p$ is open in $K$ for $p\in K$, we get $T\iota_{K}^{M}(T_{p}K)=T\iota_{K}^{M}(T_{p}S_{p})=T_{p}S_{p}=D_{p}$. We conclude that $K$ is an integral manifold for $D$. 

It remains to show that $K$ is also a maximal integral manifold for $D$ that contains $p_0$. To this end let $L$ be an integral manifold for $D$ that contains $p_0$ and $p\in L$. Because $L$ is a connected manifold, we can find a piece wise $C^r$-curve $c\colon[0,1]\rightarrow L$ with $c(0)=p_0$ and $c(1)=p$. We get $c'(t)\in T_{c(t)}L=D_{c(t)}$ for $t$ in an interval on which $c$ is of class $C^r$. Hence $L\subseteq K$. 

It remains to show that $\iota_L^K$ is of class $C^r$. Let $p\in L$. Since $L$ is an immersed submanifold of $M$, we find an open subset $S$ of $L$ that is a submanifold of $M$ and contains $p$. Thus $(S\cap U_{\phi_p})_{p}$ contains $p$, is open in $L$, and is a submanifold of $M$. Therefore  $(S\cap U_{\phi_p})_p$ is an integral manifold of $D$ and is contained in $U_{\phi_p}$. This implies $(S\cap U_{\phi_p})_p \subseteq S_p$. Because $(S\cap U_\phi)_p$ is an open $p$-neighbourhood in $L$ and contained in $S_p$, we only need to show that $\iota_L^K|_{(S\cap U_\phi)_p}^{S_p}$ is a $C^r$-map. This follows directly from $\iota_L^K|_{(S\cap U_\phi)_p}^{S_p}={\id}_M|_{(S\cap U_\phi)_p}^{S_p}$.
\end{proof}
The idea to take $K$ as a candidate for the searched maximal integral manifold comes from \cite[Theorem 1.64]{Warner}. But we modified the proof that $K$ is actually an integral manifold. As mentioned above this was already done in \cite[Chapter 1]{Eyni}.

\begin{definition}
If $F$ is a closed vector subspace of $E$, we call a subbundle $D\subseteq TM$ of $TM$ with typical fibre $F$ \emph{involutiv}, if for every vector fields $X,Y\colon U\rightarrow TM$ on an open set $U\subseteq M$ with $\Image(X)\subseteq D$ and $\Image(Y)\subseteq D$, also $\im([X,Y]) \subseteq D$.
\end{definition}

Using the idea of the proof from \cite[Proposition 1.59]{Warner} we obtain a proof of the following Theorem \ref{notwendig}. As mentioned above this was already done in \cite[Chapter 1]{Eyni}.
\begin{theorem}\label{notwendig}
If $F$ is a closed vector subspace of $E$, $D\subseteq TM$ is a subbundle of $TM$ with typical fibre $F$ and given $p\in M$ we find an integral manifold $N$ for $D$ with $p\in N$, then $D$ is involutiv.
\end{theorem}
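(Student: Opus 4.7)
The strategy is to restrict $X$ and $Y$ to an integral manifold through the given point and then use the chart-local formula for the Lie bracket. Fix $p\in U$ and, by hypothesis, pick an integral manifold $N$ for $D$ with $p\in N$. Since $\iota_N^M$ is continuous I may assume, after shrinking $N$, that $\iota_N^M(N)\subseteq U$. From the immersed-submanifold property, choose charts $\psi\colon U_\psi\to V_\psi$ of $N$ around $p$ and $\varphi\colon U_\varphi\to V_\varphi$ of $M$ around $p$ with $U_\psi\subseteq U_\varphi$ and $\varphi\circ\iota_N^M\circ\psi^{-1}=\iota_F^E|_{V_\psi}$, so that in $\varphi$-coordinates the image of $N$ near $p$ is the slice $V_\psi\subseteq F\subseteq E$. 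Because $N$ is an integral manifold, $T_qN=D_q$ for every $q\in U_\psi$, which in the chart $\varphi$ amounts to the statement that the chart-representation of $D_q$ is the closed subspace $F\subseteq E$ for every $q$ in the slice.

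I would then transport $X$ and $Y$ to $\varphi$-coordinates, obtaining $C^{r-1}$-maps $\tilde X,\tilde Y\colon V_\varphi\to E$, and invoke $\im(X),\im(Y)\subseteq D$ to conclude $\tilde X(q),\tilde Y(q)\in F$ for all $q\in V_\psi$. Hence $\tilde X|_{V_\psi},\tilde Y|_{V_\psi}\colon V_\psi\to F$ are $C^{r-1}$-maps representing vector fields $X_N,Y_N$ on $N$ via $\psi$. Since $V_\psi$ is open in $F$, the chain rule gives $d\tilde X(q)(v)=d(\tilde X|_{V_\psi})(q)(v)$ for $q\in V_\psi$ and $v\in F$ (and symmetrically for $Y$). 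Substituting $v=\tilde Y(q)$ and $v=\tilde X(q)$ in the chart expression for the bracket yields
\begin{equation*}
[X,Y](q) \;=\; d\tilde Y(q)(\tilde X(q)) - d\tilde X(q)(\tilde Y(q)) \;=\; [X_N,Y_N](q) \in F
\end{equation*}
for every $q\in V_\psi$. Since $F$ is exactly the chart-representation of $D_q$ along the slice, $[X,Y](p)\in D_p$; as $p\in U$ was arbitrary this gives $\im([X,Y])\subseteq D$.

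The main technical point is to track carefully that the chart-identification ``slice direction $=$ $D_q$'' holds not only at $p$ but along the entire slice $V_\psi$; this is where one uses both that $N$ is an integral manifold and that $X,Y$ take values in $D$ at every point of $U$, not merely on $N$. Once that is in place, the computation is a direct transcription of Warner's finite-dimensional argument (\cite[Proposition 1.59]{Warner}) into the locally convex setting, as indicated by the remark preceding the theorem statement.
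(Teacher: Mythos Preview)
Your proof is correct and follows the same underlying idea as the paper's: restrict the two vector fields to the integral manifold through $p$ and observe that their bracket, being tangent to $N$, lands in $D_p$. The paper packages this coordinate-freely by noting that $X_i|_U$ is $\iota$-related to $X_i$ for the inclusion $\iota\colon U\hookrightarrow M$ of a submanifold neighbourhood $U\subseteq N$, whence $[X_1|_U,X_2|_U]$ is $\iota$-related to $[X_1,X_2]$ and thus $[X_1,X_2](p)\in T\iota(T_pU)=D_p$; your chart computation with the local bracket formula is exactly the unwinding of this $\iota$-relatedness argument.
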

\begin{proof}
Let $X_1$ and $X_2$ be vector fields on $M$ with $\Image(X_i)\subseteq D$ for $i\in\{1,2\}$. Let $p\in M$. To see $[X_1,X_2](p)\in D_p$, let $N$ be an integral manifold for $D$ with $p\in N$. There exists an open $p$-neighbourhood $U\subseteq N$ such that $U$ is a submanifold of $M$. Since $X_i(U)\subseteq TU$ and $TU\subseteq TM$ is a submanifold, $X_i|_U$ is of class $C^{r-1}$ also as a map to $TU$. Hence $X_1|_U$ and $X_2|_U$ are $C^{r-1}$-vector fields on $U$ and so is $[X_1|_U,X_2|_U]$. Let $\iota\colon U\rightarrow M$ be the inclusion map. Since $X_1|_U\colon U\rightarrow TU$ is $\iota$-related to $X_1$ and $X_2|_U\colon U\rightarrow TM$ is $\iota$-related to $X_2$ so $[X_1|_U,X_2|_U]$ is $\iota$-related to $[X_1,X_2]$. Hence $[X_1,X_2](p)=[X_1,X_2](\iota(p))=T\iota([X_1|_U,X_2|_U](p))\in T\iota (T_p U)=D_p$.
\end{proof}

\begin{remark}\label{FB-charakter}
Let $F$ be a complemented subspace of $E$ with vector complement $H$ and $D\subseteq TM$ be a subbundle of $TM$ with typical Fibre $F$. For a chart $\phi\colon U\rightarrow V_1\times V_2$ of $M$ and the inclusion $\iota_{\overline{y}}\colon V_1\rightarrow V_1\times V_2$, $x\mapsto (x,\overline{y})$ we get the following equivalences:
\begin{align*}
&\phi \text{ is a Frobenius chart}\\
\Leftrightarrow & (\forall \overline{y}\in V_2) ~ S_{\overline{y}}^\phi=\phi^{-1}(\sbull,\overline{y})(V_1)\text{ is an integral manifold for }D\\
\Leftrightarrow & (\forall \overline{y}\in V_2)(\forall x\in V_1) ~ T_{\phi^{-1}(x,\overline{y})} S_{\overline{y}}^\phi=T_x(\phi^{-1}\circ\iota_{\overline{y}})(\{x\}\times F)= D_{\phi^{-1}(x,\overline{y})}\\
\Leftrightarrow & (\forall p\in U_\phi) ~ d\phi(D_p)=F.
\end{align*}
\end{remark}

\begin{definition}
We call a Distribution $D$ with typical fibre $F$ of $M$ \emph{local Frobenius distribution}, if given $p\in M$ we find a chart $\phi\colon U\rightarrow V$ of $M$ around $p$ with $d\phi(D_p)=F$ such that the subbundle $D_\phi:=T\phi (D\cap TU)$ of $V\times E$ with typical fibre $F$ is a Frobenius distribution. For every subbundle chart $\psi$ of $D$, the map $\psi_\phi:=\psi\circ T\phi^{-1}$ is a subbundle chart for $D_\phi$.
\end{definition}

\begin{lemma}\label{localfb}
A Distribution $D$ for $M$ is a Frobenius distribution if and only if it is a local Frobenius distribution.
\end{lemma}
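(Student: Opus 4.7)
\smallskip
\noindent\textbf{Proof plan.} The plan is to prove both implications by transport along the chart $\phi$ appearing in the definition of a local Frobenius distribution, using the simple observation that $\phi\colon U\to V$ is a $C^r$-diffeomorphism identifying $D\cap TU$ with $D_\phi$, and hence maps integral manifolds of $D$ in $U$ bijectively onto integral manifolds of $D_\phi$ in $V$.

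For the implication ``Frobenius $\Rightarrow$ local Frobenius'' I would argue as follows. Let $p\in M$ and pick a Frobenius chart $\phi\colon U\to V_1\times V_2$ for $D$ around $p$. By Remark \ref{FB-charakter} we have $d\phi(D_q)=F$ for every $q\in U$, hence $(D_\phi)_{\bar q}=\{\bar q\}\times F$ for every $\bar q\in V:=V_1\times V_2$. Then the identity chart $\mathrm{id}_V\colon V\to V_1\times V_2$ is trivially a Frobenius chart for $D_\phi$: every slice $V_1\times\{\bar y\}$ is a submanifold of $V$ whose tangent space at each point equals $F=(D_\phi)_{\bar q}$, so the slice is an integral manifold of $D_\phi$. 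Since this works for every $p\in M$, $D$ is a local Frobenius distribution.

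For the converse direction (the main content) I would fix $p\in M$ and choose a chart $\phi\colon U\to V$ around $p$ such that $D_\phi=T\phi(D\cap TU)$ is a Frobenius distribution on $V$. Then $V$ admits, around $\phi(p)$, a Frobenius chart $\psi\colon V'\to V_1\times V_2$ for $D_\phi$; set $\chi:=\psi\circ\phi|_{\phi^{-1}(V')}$, which is a chart of $M$ around $p$ with image $V_1\times V_2$. For each $\bar y\in V_2$ the slice of $\psi$, namely $S_{\bar y}^\psi=\psi^{-1}(V_1\times\{\bar y\})$, is an integral manifold of $D_\phi$ in $V$. I would then verify that $\phi^{-1}(S_{\bar y}^\psi)=\chi^{-1}(V_1\times\{\bar y\})$ is an integral manifold of $D$ in $U$: since $\phi$ is a $C^r$-diffeomorphism and $D_\phi=T\phi(D\cap TU)$, for every $q\in \phi^{-1}(S_{\bar y}^\psi)$ one has
\begin{align*}
T_q\bigl(\phi^{-1}(S_{\bar y}^\psi)\bigr)=T_q\phi^{-1}\bigl(T_{\phi(q)}S_{\bar y}^\psi\bigr)=T_q\phi^{-1}\bigl((D_\phi)_{\phi(q)}\bigr)=D_q.
\end{align*}
Connectedness is preserved under the diffeomorphism, so $\phi^{-1}(S_{\bar y}^\psi)$ is indeed a connected immersed (in fact embedded) submanifold with the right tangent space. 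Hence $\chi$ is a Frobenius chart for $D$ around $p$, and varying $p$ produces the desired atlas.

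\smallskip
\noindent\textbf{Main obstacle.} Everything is really a bookkeeping exercise with Remark \ref{FB-charakter} and the chain rule; the only point requiring slight care is the ``connected immersed submanifold'' condition in the definition of integral manifold, but this is inherited automatically from the corresponding integral manifold of $D_\phi$ under the $C^r$-diffeomorphism $\phi^{-1}|_{S_{\bar y}^\psi}$. I would not expect any serious analytic difficulty here.
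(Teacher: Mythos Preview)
Your proposal is correct and follows essentially the same approach as the paper: for the nontrivial direction you compose the chart $\phi$ with a Frobenius chart $\psi$ for $D_\phi$ and verify that $\chi=\psi\circ\phi$ is a Frobenius chart for $D$. The paper carries out this verification via the differential characterization of Remark~\ref{FB-charakter} (computing $d(\chi\circ\phi)(D_q)=F$ in one line), whereas you check directly that the slices pull back to integral manifolds; these are equivalent by the same remark, so the arguments coincide in substance.
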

\begin{proof}
It is clear that a Frobenius distribution is a local Frobenius distribution. 

Let $D$ be a local Frobenius distribution, $p\in M$ and $\phi\colon U\rightarrow V$ a chart of $M$ around $p$ with $\phi(p)=0$ and $d\phi(D_p)=F$. We find a diffeomorphism $\chi\colon V'\rightarrow W$ with $V'$ open in $V$ that is a Frobenius chart for $D_\phi:=T\phi (D\cap TU)$. We calculate 
\begin{align*}
d(\chi\circ\phi)(D_q)=d\chi(T\phi(D_q))=d\chi((D_\phi)_{\phi(q)})=F
\end{align*}
and by Remark \ref{FB-charakter} we see that $\chi\circ\phi$ is a Frobenius chart of $D$.
\end{proof}

\section{The Frobenius theorem for Banach distributions}\label{Frobenius theorem}
As mentioned in the introduction, Teichmann showed a Frobenius theorem for finite-dimensional vector distributions on convenient manifolds that are modelled over locally convex spaces. A similar result for manifolds that are modelled over locally convex spaces in the sense described in the introduction was obtained in \cite[Chapter 2; Theorem 2.6]{Eyni}. The aim of this section and of the whole article is to obtain a Frobenius theorem for Banach distributions on manifolds that are modelled over locally convex spaces. 

In \cite{Stefan} Stefan considers distributions of Banach manifolds that are not necessarily subbundles of the tangent bundle but each fibre of the distributions in question is a Banach space. Our proof is inspired by the proofs of \cite[Section 4]{Stefan} and \cite[Theorem 2]{Teichmann} respectively \cite[Theorem 2.6]{Eyni}. But as Stefan considers Banach manifolds, we are interested in manifolds that are modelled over locally convex spaces. So one of the main problems will be that we have no solution theory for initial value problems in locally convex spaces.

The idea to generalise the methods used in \cite{Stefan}  received the author from Gl{\"o}ckner.

\begin{convention}
Throughout this section $E$ will be a locally convex space, $r \in \mathbb{N}$ and $M$ a $C^r$-manifold modelled over $E$.
\end{convention}

\begin{definition}
Let $N$ be a $C^r$-manifold, $X\colon N\rightarrow TN$ be a $C^{r-1}$-vector field of $N$ and $f\colon M\rightarrow N$ be a diffeomorphism. In this situation we define the $C^{r-1}$-vector field $f^{\ast}X:=Tf^{-1}\circ X\circ f$ of $M$.
\end{definition}

The following easy Lemmas \ref{ENDL3} and \ref{ENDL4} have also been stated in \cite{Eyni}. They are also more or less clear and straightforward generalisation of the finite-dimensional case.

\begin{lemma}\label{ENDL3}
If $X\colon M\rightarrow TM$ is a $C^{r-1}$-vector field that provides a local flow and  $t\in \mathbb{R}$ we get $(\Phi_{t}^X)^{\ast}X = X$ on $\Omega^X_t$, where we write $\Phi^X \colon \Omega^X \rightarrow M$ for the global flow of $X$ like in Definition \ref{GlobFluss}.
\end{lemma}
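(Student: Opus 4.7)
The plan is to unfold the definition of pullback and reduce the claim to the well-known tangential identity $T\Phi_t^X \circ X = X \circ \Phi_t^X$ on $\Omega^X_t$. Indeed, provided $\Phi_t^X \colon \Omega_t^X \to \Omega_{-t}^X$ is a diffeomorphism with inverse $\Phi_{-t}^X$ (this should be part of the appendix's setup of the global flow), the statement $(\Phi_t^X)^{\ast} X = X$ at a point $p \in \Omega^X_t$ is literally the equation $T(\Phi_t^X)^{-1}\bigl(X(\Phi_t^X(p))\bigr) = X(p)$, which is equivalent to $T\Phi_t^X\bigl(X(p)\bigr) = X(\Phi_t^X(p))$.

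To establish the latter, I would fix $p \in \Omega_t^X$ and use openness of $\Omega^X \subseteq \mathbb{R} \times M$ to pick $\varepsilon > 0$ such that $(s,p),(t+s,p) \in \Omega^X$ for all $s \in (-\varepsilon,\varepsilon)$. The group property of the global flow (recalled in the appendix) then gives
\begin{equation*}
\Phi_{t+s}^X(p) \;=\; \Phi_t^X\bigl(\Phi_s^X(p)\bigr) \qquad \text{for all } s \in (-\varepsilon,\varepsilon).
\end{equation*}
Differentiating both sides with respect to $s$ at $s=0$ produces the desired equality: on the left, since $s \mapsto \Phi_{s}^X(p)$ is the integral curve of $X$ through $p$, we obtain $X(\Phi_t^X(p))$; on the right, the chain rule together with $\frac{d}{ds}\big|_{s=0}\Phi_s^X(p) = X(p)$ yields $T\Phi_t^X(X(p))$.

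I do not anticipate any essential obstacle; the proof is a direct consequence of the chain rule and the semigroup law of the flow. The only care needed is to ensure that the appendix indeed guarantees (i) openness of $\Omega^X$, (ii) the flow equation $\Phi_{t+s}^X = \Phi_t^X \circ \Phi_s^X$ on a neighbourhood of $s = 0$, and (iii) that $\Phi_t^X$ restricts to a diffeomorphism $\Omega_t^X \to \Omega_{-t}^X$ with inverse $\Phi_{-t}^X$, so that $(\Phi_t^X)^{\ast}X$ is meaningfully defined. All three are standard and, as noted in the introduction, should carry over from Lang's Banach-manifold treatment without modification.
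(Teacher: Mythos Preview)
Your proposal is correct and follows essentially the same route as the paper: both arguments differentiate the flow identity $\Phi^X_{t+s}(p)=\Phi^X_t(\Phi^X_s(p))$ at $s=0$ to obtain $X(\Phi^X_t(p))=T\Phi^X_t(X(p))$, from which the pullback statement is immediate. The paper's proof is just a one-line version of your computation, and the properties (i)--(iii) you list are indeed supplied by the appendix (Theorem~\ref{FlussCr} and the lemma following it).
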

\begin{proof}
We assume $\Omega_t \neq \emptyset$. Given $p\in \Omega^X_t$ we have
\begin{align*}
&X(\Phi^X_{t}(p))=\frac{d}{ds}\Big|_{s=0}\Phi^X_{t+s}(p) =\frac{d}{ds}\Big|_{s=0}\Phi^X_{t}(\Phi^X_{s}(p)) =T\Phi^X_{t}\left(\frac{d}{ds}\Big|_{s=0}\Phi^X_{s}(p)\right)\\
=&T\Phi^X_{t}(X(p)).
\end{align*}
The assertion follows directly.
\end{proof}

\begin{lemma}\label{ENDL4}
Given a  $C^r$-manifold $N$, a diffeomorphism $\phi\colon M\rightarrow N$ and two vector fields $X,Y\colon N\rightarrow TN$ we get $\phi^{\ast}[X,Y]=[\phi^{\ast}X,\phi^{\ast}Y]$.
\end{lemma}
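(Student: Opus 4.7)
The plan is to reduce the identity to the standard naturality of the Lie bracket under push-forward. First I would observe that $\phi^{\ast}X$ is $\phi$-related to $X$ (in the sense used in the proof of Theorem~\ref{notwendig}): indeed,
\begin{align*}
T\phi \circ (\phi^{\ast}X) \;=\; T\phi \circ T\phi^{-1} \circ X \circ \phi \;=\; X \circ \phi,
\end{align*}
and analogously $\phi^{\ast}Y$ is $\phi$-related to $Y$. Next I would invoke the fact that the bracket of two pairs of $\phi$-related vector fields is again $\phi$-related. This is the very tool the paper already uses in Theorem~\ref{notwendig} (with the inclusion $\iota$ in place of $\phi$), hence it is available in the present $C^k_c$-setting. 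Applying it yields $T\phi \circ [\phi^{\ast}X, \phi^{\ast}Y] = [X,Y]\circ\phi$, and composing with $T\phi^{-1}$ on the left gives exactly $[\phi^{\ast}X, \phi^{\ast}Y] = T\phi^{-1}\circ [X,Y]\circ \phi = \phi^{\ast}[X,Y]$.

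If one prefers to see the statement unpacked rather than to invoke the bracket-of-related-fields lemma as a black box, the underlying verification is a short local chart computation: pick any chart $\psi\colon U\to V$ of $N$ and consider the chart $\tilde\psi:=\psi\circ\phi\colon\phi^{-1}(U)\to V$ of $M$. From $\tilde\psi\circ\phi^{-1}=\psi$ and the definition of the principal part one reads off
\begin{align*}
(\phi^{\ast}X)_{\tilde\psi} \;=\; \pr_2\circ T\tilde\psi\circ T\phi^{-1}\circ X\circ\phi\circ\tilde\psi^{-1} \;=\; \pr_2\circ T\psi\circ X\circ\psi^{-1} \;=\; X_\psi,
\end{align*}
and the analogous identities for $Y$ and $[X,Y]$ hold. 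Since the Lie bracket on manifolds modelled over locally convex spaces is defined locally by the coordinate formula $[A,B]_\psi(x) = dB_\psi(x,A_\psi(x)) - dA_\psi(x,B_\psi(x))$, the principal parts of $[\phi^{\ast}X,\phi^{\ast}Y]$ and $\phi^{\ast}[X,Y]$ in $\tilde\psi$ are literally equal, and $\psi$ was arbitrary.

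There is no real obstacle beyond making sure that the local computation goes through in the $C^k_c$-setting, but this reduces to the chain rule for $C^{r-1}$-maps in Keller's differential calculus, which is standard. Thus no difficulty particular to the locally convex world arises compared with the finite-dimensional or Banach proofs.
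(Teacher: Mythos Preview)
Your proposal is correct and essentially identical to the paper's argument: the paper notes that $X$ is $\phi^{-1}$-related (``$\phi^{-1}$-connected'') to $\phi^{\ast}X$, applies naturality of the bracket, and reads off the identity from $T\phi^{-1}\circ[X,Y]=[\phi^{\ast}X,\phi^{\ast}Y]\circ\phi^{-1}$. You phrase the same relatedness via $\phi$ rather than $\phi^{-1}$ and add an optional local-chart unpacking, but the substance is the same.
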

\begin{proof}
From $T\phi^{-1}\circ X\circ \phi=\phi^{\ast}X$ we conclude that $X$ and $\phi^\ast X$ are $\phi^{-1}$-connected. An analogous statement holds for $Y$. Hence also $[X,Y]$ is $\phi^{-1}$-connected to $[\phi^{\ast}X,\phi^{\ast}Y]$ and so $T\phi^{-1}\circ[X,Y]=[\phi^{\ast}X,\phi^{\ast}Y]\circ\phi^{-1}$.
\end{proof}

The following Lemma \ref{AblLieKlam} is a straightforward generalisation of the finite-dimensional case.
\begin{lemma}\label{AblLieKlam}
If $X,Y \colon M \rightarrow TM$ are $C^{r-1}$-vector fields and $X$ provides a local flow, then we have 
$\frac{d}{ds}\big|_{s=0} \left( (\Phi_s^X)^\ast Y(p) \right) = [X,Y](p)$
for all $p \in \Omega^X$.
\end{lemma}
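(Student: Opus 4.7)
The plan is to localise around $p$ and carry out a direct computation in a chart, using the joint regularity of the local flow supplied by the appendix. I would pick a chart $\varphi\colon U_\varphi\to V_\varphi\subseteq E$ with $\varphi(p)=0$ and write $\tilde X:=d\varphi\circ X\circ\varphi^{-1}$ and $\tilde Y:=d\varphi\circ Y\circ\varphi^{-1}$ for the principal parts, both of class $C^{r-1}$. Since $X$ provides a local flow, the map
\begin{align*}
\tilde\Phi(s,q):=\varphi\bigl(\Phi_s^X(\varphi^{-1}(q))\bigr)
\end{align*}
is defined on an open neighbourhood of $(0,0)$ in $\mathbb{R}\times V_\varphi$ and satisfies $\partial_s\tilde\Phi(s,q)=\tilde X(\tilde\Phi(s,q))$ with $\tilde\Phi(0,q)=q$. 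By the flow-regularity results in the appendix, $\tilde\Phi$ is jointly $C^{r-1}$; since $[X,Y]$ is to be formed, we are implicitly in the regime $r\geq 2$.

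Next I would translate the pullback into the chart. Combining $(\Phi_s^X)^{-1}=\Phi_{-s}^X$ with the definition $f^{\ast}Z=Tf^{-1}\circ Z\circ f$, the principal part of $(\Phi_s^X)^{\ast}Y$ evaluated at $\varphi(p)=0$ reads
\begin{align*}
\tilde f(s):=D_2\tilde\Phi(-s,\tilde\Phi(s,0))\bigl[\tilde Y(\tilde\Phi(s,0))\bigr],
\end{align*}
where $D_2$ denotes the Fr\'echet derivative in the spatial argument. The plan is then to differentiate $\tilde f$ at $s=0$ via the product and chain rules, using $\tilde\Phi(0,q)=q$, $D_2\tilde\Phi(0,q)=\id_E$, $\partial_s\tilde\Phi(s,q)\bigl|_{s=0}=\tilde X(q)$ and the key identity
\begin{align*}
\partial_s D_2\tilde\Phi(-s,0)\big|_{s=0}=-D\tilde X(0),
\end{align*}
which is obtained by interchanging $\partial_s$ and $D_2$ via Schwarz's theorem and then applying the ODE satisfied by $\tilde\Phi$. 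The product rule then collapses the derivative to
\begin{align*}
\tilde f'(0)=-D\tilde X(0)\bigl[\tilde Y(0)\bigr]+D\tilde Y(0)\bigl[\tilde X(0)\bigr],
\end{align*}
which is precisely the chart expression for $[X,Y](p)$, finishing the proof.

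The main technical obstacle is the joint $C^{r-1}$-regularity of $\tilde\Phi$ and the legitimacy of interchanging $\partial_s$ with the spatial derivative $D_2$. In a locally convex setting a Picard--Lindel\"of theorem is unavailable, so the smooth dependence of the flow on its two arguments cannot be taken for granted but must be imported from the appendix. Once joint $C^2$-regularity of $\tilde\Phi$ is in hand (automatic from $r\geq 2$), Schwarz's theorem justifies the interchange and the remaining product- and chain-rule computation is entirely routine.
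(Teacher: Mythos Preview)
Your proposal is correct and follows essentially the same route as the paper: both reduce to a chart and compute $\frac{d}{ds}\big|_{s=0}\,d\Phi_{-s}(\Phi_s(p))\bigl[g(\Phi_s(p))\bigr]$ directly, with the crucial interchange of $\partial_s$ and the spatial derivative justified by Schwarz's theorem and the $C^r$-regularity of the flow supplied by the appendix. Two minor points of wording: the appendix actually gives $\tilde\Phi$ of class $C^r$ (not merely $C^{r-1}$), which only strengthens your argument; and ``Fr\'echet derivative'' is slightly loose in the locally convex Keller setting---the paper works throughout with the directional derivative $d_2\tilde\Phi(\cdot;\cdot)$, which is what your $D_2$ should be read as.
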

\begin{proof}
It is enough to show the assertion in the local case. Let $U \subseteq E$ be an open subset and $f,g\colon U \rightarrow E$ be $C^{r-1}$-maps such that $f$ provides a local flow. We write $\Phi\colon \Omega \rightarrow U$ for the global flow of $f$. For $p \in U$ we calculate
\begin{align*}
&\frac{d}{ds}\Big|_{s=0} (\Phi_s^\ast g (p))= \frac{d}{ds}\Big|_{s=0} d\Phi_{-s}\left(\Phi_s(p) , g(\Phi_s(p))\right)\\
=& \frac{d}{ds}\Big|_{s=0} d\Phi (-s , \Phi(s,p); 0, g(\Phi(s,p))\\
=& d_1(d\Phi)\left(0 , \Phi(0,p) , 0, g (\Phi(0,p)); -1,\frac{d}{ds}\Big|_{s=0} \Phi (s,p)\right) \\
&+ d\Phi \left(0,\Phi(0, p); 0 , \frac{d}{ds} \Big|_{s=0} g(\Phi(s,p)) \right)\\
=& d_1(d\Phi)(0 , p , 0, g (p); -1, 0) + d_1(d\Phi)(0 , p , 0, g (p); 0, f(p)) \\
&+ d\Phi(0,p; 0,dg(p,f(p))) \\
=& -\frac{d}{dt}\Big|_{t=0} d\Phi(t , p ; 0, g (p)) +  \frac{d}{dt}\Big|_{t=0} d\Phi(0,p+tf(p);0,g(p))\\
&+d_2\Phi(0,p;dg(p,f(p)))\\
=& -\frac{d}{dt}\Big|_{t=0} \frac{d}{ds}\Big|_{s=0} \Phi(t , p + sg (p)) + \frac{d}{dt}\Big|_{t=0} d\Phi_0(p+tf(p);g(p)) + dg(p,f(p))\\
=&-\frac{d}{ds}\Big|_{s=0} f(p+sg(p))+ \frac{d}{dt}\Big|_{t=0} g(p) + dg(p,f(p)) \\
=&- df(p,g(p))+ dg(p,f(p))
\end{align*}
\end{proof}

The following Theorem \ref{SatzUeberUKF} comes from \cite[Theorem 2.3]{Gloeckner3}.
\begin{theorem}\label{SatzUeberUKF}
Let $E$ be a locally convex space, $F$ be a Banach space, $P \subseteq E$ and $U\subseteq F$ be open sets and $f\colon P\times U \rightarrow F$ be a $C^r$-map with $r\in \mathbb{N}$. We write $f_p:=f(p,\sbull)\colon U\rightarrow F$ for $p\in P$. Let $p_0\in P$ and $x_0 \in U$ with $f_{p_0}'(x_0) \in \GL(F)$. If $r\geq 2$ or $r=1$ and
\begin{align}
\sup_{(p,x)\in P\times U} \Vert f_{p_0}'(x_0) - f_p'(x)\Vert_{op} < \frac{1}{\Vert f_{p_0}'(x_0)^{-1}\Vert_{op}},
\end{align}
then we find an open $p_0$-neighbourhood $P_0 \subseteq P$ and an open $x_0$-neighbourhood $U_0 \subseteq U$ such that 
\begin{compactenum}[(a)]
\item $f_p(U_0)$ is open in $F$ for all $p \in P$ and $f_p|_{U_0}\colon U_0 \rightarrow f_p(B)$ is a $C^r$-diffeomorphism.
\item $W:=\bigcup_{p\in P_0}(\{p\}\times f_{p}(U_0)$ is open in $E\times F$ and $g \colon W\rightarrow U_0,~ (p,y) \mapsto f_p^{-1}(y)$ is a $C^r$-map.
\item $\Phi \colon P_0 \times U_0 \rightarrow W,~ (p,x) \mapsto (p, f_p(x))$ is a $C^r$-diffeomorphism with inverse $\Psi\colon W \rightarrow P_0 \times U_0,~ (p,z) \rightarrow (p,g(p,z))$.
\end{compactenum}
\end{theorem}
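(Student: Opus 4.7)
The plan is to treat this as a parametric inverse function theorem and reduce it to the Banach fixed point theorem in the $F$-direction, using the parameter $p\in E$ as a (merely locally convex) spectator. Since $f_{p_0}'(x_0)\in\GL(F)$, I may compose $f$ with $f_{p_0}'(x_0)^{-1}$ and translate to reduce to the normalised case $x_0=0$, $f(p_0,0)=0$ and $f_{p_0}'(0)=\id_F$. Define the auxiliary $C^r$-map $g\colon P\times U\to F$ by $g(p,x):=x-f(p,x)$, so that solving $f_p(x)=y$ is the same as finding a fixed point of $T_{p,y}(x):=g(p,x)+y$.

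The first technical step is to arrange a uniform contraction estimate on $g$ in its second argument. If $r\geq 2$, then $d_2 g$ is continuous on $P\times U$ and $d_2g(p_0,0)=0$; picking $P_1\subseteq P$ and $U_0\subseteq U$ so small that $\|d_2g(p,x)\|_{op}\leq \tfrac12$ on $P_1\times U_0$ and applying the mean value theorem (which in the $F$-direction is a genuine Banach estimate), one gets $\|g(p,x)-g(p,x')\|\leq \tfrac12\|x-x'\|$ uniformly in $p\in P_1$. If $r=1$, the displayed hypothesis gives the same estimate directly. Shrink $P_1$ further to a neighbourhood $P_0$ of $p_0$ so that $\|g(p,0)\|$ is small enough to make $T_{p,y}$ map a closed ball $\overline{B}_\rho(0)\subseteq U_0$ into itself for all $y$ in some ball $B_{\rho/2}(0)\subseteq F$ and all $p\in P_0$. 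Banach's fixed point theorem then yields a unique $x=:h(p,y)\in\overline{B}_\rho(0)$ with $f_p(h(p,y))=y$, and the standard contraction estimate gives continuity of $h$ in $(p,y)$.

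Next I would upgrade $h$ to a $C^r$-map. Continuity of $h$ together with the fact that $f_p'(x)=\id_F-d_2g(p,x)$ is invertible on $P_0\times U_0$ (Neumann series, since $\|d_2g\|\leq\tfrac12$) lets me differentiate the identity $f(p,h(p,y))=y$ formally in both arguments to obtain the candidate formulas $d_2h(p,y)=f_p'(h(p,y))^{-1}$ and $d_1h(p,y)=-f_p'(h(p,y))^{-1}\circ d_1f(p,h(p,y))$. Continuity of these candidates is immediate from continuity of $h$ and of the inversion map on $\GL(F)$; the actual differentiability of $h$ is then obtained by the usual direct estimate (the difference quotient $h(p+tv,y+tw)-h(p,y)$ minus the candidate derivative is $o(t)$, because $f$ is $C^1$ and $\|(f_p')^{-1}\|\leq 2$). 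An induction on $r$, differentiating the explicit formula for $dh$ and using that composition of $C^{r-1}$-maps and inversion in $\GL(F)$ are $C^{r-1}$, yields $h\in C^r$.

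From $h$ the conclusions follow almost directly. Shrink $P_0$ and $U_0$ once more so that $h$ is defined on $P_0\times V_0$ for a suitable ball $V_0$, let $U_0:=h(P_0\times V_0)\cap(\text{initial }U_0)$ after a routine open-neighbourhood cleanup, and set $W:=\{(p,y)\in P_0\times F : y\in f_p(U_0)\}$. Openness of $W$ is read off from the representation $W=\bigl(\Phi(P_0\times U_0)\bigr)$ via the continuity of $\Phi$ and the continuous local inverse $(p,y)\mapsto(p,h(p,y))$; on $W$ this inverse is $g$ in the statement, and it is $C^r$ because $h$ is. Part~(a) is then the slice statement of the diffeomorphism $\Phi$, and part~(c) is the global form. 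The main obstacle is the passage from continuity of $h$ to its $C^r$-regularity, where one must check that Keller's $C^r_c$-calculus allows the formal differentiation above; this is where the Banach hypothesis on $F$ is essential, since inversion in $\GL(F)$ and the Neumann series provide the smooth dependence on the parameter that locally convex $E$ cannot supply on its own.
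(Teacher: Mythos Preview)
The paper does not give its own proof of this theorem: it is quoted verbatim from \cite[Theorem~2.3]{Gloeckner3} and used as a black box in the proof of Theorem~\ref{BanachBanach}. So there is no in-paper argument to compare your proposal against.

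That said, your sketch is the standard contraction-mapping proof of a parametric inverse function theorem and is essentially correct in outline; this is also the approach taken in the cited reference. A few places deserve more care than you give them. First, for $r\geq 2$ you need that $(p,x)\mapsto d_2g(p,x)$ is continuous into $\mathcal{L}(F)$ with the \emph{operator norm} topology, not merely the compact-open topology; this is exactly what Lemma~\ref{cob} supplies (applied to $d_2f$, which is $C^{r-1}$ and linear in the last slot, so $(d_2f)^\vee$ is $C^{r-2}$ into $\mathcal{L}(F)_b$). Second, in the $r=1$ case the continuity of $dh$ is not automatic from your formula, because $(p,x)\mapsto f_p'(x)$ need not be operator-norm continuous; you must use the quantitative hypothesis to write $f_p'(x)^{-1}$ as a uniformly convergent Neumann series and argue termwise that $(p,x,u)\mapsto f_p'(x)^{-1}u$ is continuous. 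Third, your ``routine open-neighbourhood cleanup'' at the end is doing real work: you should fix $U_0$ as a ball on which the Lipschitz estimate holds and then show directly that $f_p(U_0)$ and $W$ are open by running the fixed-point argument centred at an arbitrary interior point, rather than redefining $U_0$ via $h$. None of these is a genuine obstruction, but each would need to be written out to make the argument complete.
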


From \cite[Proposition 2.1]{Gloeckner2} we get the following Lemma \ref{cob}.
\begin{lemma}\label{cob}
If $F$ and $H$ are locally convex spaces, $U \subseteq E$ is an open set and $f \colon U \times F \rightarrow H$ is a $C^r$-map that is linear in the second argument, then $f^\vee \colon U \rightarrow \text{L}(F,H)_{c.o.}$ is of class $C^r$ and $f^\vee \colon U \rightarrow \text{L}(F,H)_{b}$ is of class $C^{r-1}$.
\end{lemma}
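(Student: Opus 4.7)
The plan is to prove both assertions by induction on $r$, after first checking that $f^\vee(x)\in\text{L}(F,H)$ for every $x\in U$: each $f^\vee(x)$ is linear by assumption and continuous because $f(x,\sbull)$ is the restriction of a continuous map. For the base case of the first assertion I would show that $f^\vee\colon U\to\text{L}(F,H)_{c.o.}$ is continuous at a fixed $x_0\in U$; given a compact $K\subseteq F$ and a convex balanced $0$-neighbourhood $V\subseteq H$, joint continuity of $f$ on the compact set $\{x_0\}\times K$ provides an open $x_0$-neighbourhood $U_0$ with $f(x,v)-f(x_0,v)\in V$ for all $(x,v)\in U_0\times K$, which is exactly continuity of $f^\vee$ at $x_0$ into the compact-open topology.

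The key step of the induction is to identify the derivative. Since $f$ is linear in the second variable, the partial derivative $d_1 f\colon U\times F\times E\to H$ is again linear in the $F$-variable (a limit of linear maps is linear). I would propose
\begin{align*}
(df^\vee)(x_0)(w)(v) := d_1 f(x_0,v;w),
\end{align*}
which lies in $\text{L}(F,H)$, and verify that
\begin{align*}
\frac{f^\vee(x_0+tw)-f^\vee(x_0)}{t}\longrightarrow d_1 f(x_0,\sbull;w)\qquad (t\to 0)
\end{align*}
holds uniformly in $v$ over compact subsets of $F$. This amounts to the standard mean-value estimate applied pointwise in $v$ together with uniform continuity of $d_1 f$ on the compact set $\{x_0\}\times K\times\{tw:|t|\leq\varepsilon\}$. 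Because $d_1 f$ is itself $C^{r-1}$ and still linear in its $F$-variable, the induction hypothesis applied to $d_1 f$ propagates the regularity and gives $f^\vee\in C^r(U,\text{L}(F,H)_{c.o.})$.

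For the statement into $\text{L}(F,H)_b$, the mechanism is that one extra order of smoothness of $f$ converts into uniformity on bounded subsets via the fundamental theorem of calculus: writing
\begin{align*}
f^\vee(x+w)(v)-f^\vee(x)(v)=\int_0^1 d_1 f(x+sw,v;w)\,ds,
\end{align*}
the joint continuity of $d_1 f$ gives uniform bounds for the right-hand side when $v$ ranges over a bounded $B\subseteq F$ and $(x,w)$ varies in a small set; this is precisely continuity of $f^\vee$ into $\text{L}(F,H)_b$. Iterating the same reasoning on the higher partial derivatives $d_1^{\,k} f$ yields the claimed $C^{r-1}$ regularity into the bounded topology.

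The principal obstacle lies in this second half: in a general locally convex $F$, bounded subsets need not be relatively compact, so continuity of $f$ alone cannot upgrade $f^\vee$ from the compact-open to the bounded topology. One genuinely needs the $C^1$ control on $f$ built into the integral representation above, and that is precisely why the bounded-topology assertion is of class $C^{r-1}$ rather than $C^r$, and why care is required when invoking the fundamental theorem of calculus in the $C^k_c$-setting, where integrals of continuous curves exist only under suitable completeness hypotheses on the target space.
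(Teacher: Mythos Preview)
The paper does not prove this lemma; it simply cites \cite[Proposition~2.1]{Gloeckner2}. So there is no in-paper argument to compare against, and your outline has to stand on its own.

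Your treatment of the compact--open half is essentially correct and standard: continuity of $f^\vee$ into $\mathrm{L}(F,H)_{c.o.}$ follows from joint continuity of $f$ together with compactness of $K$, and the inductive step via $d_1f$ (still linear in the $F$-variable) is the right mechanism.

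The gap is in the bounded-topology half. You write that ``the joint continuity of $d_1 f$ gives uniform bounds for the right-hand side when $v$ ranges over a bounded $B\subseteq F$,'' but joint continuity alone does not deliver this: bounded subsets of a general locally convex $F$ are not compact, so you cannot cover $B$ by finitely many continuity neighbourhoods. What actually makes the estimate work is that $d_1f(x,v;w)$ is \emph{separately linear in both $v$ and $w$}. Concretely, given a seminorm $q$ on $H$, continuity of $d_1f$ at $(x_0,0,0)$ yields balanced neighbourhoods $U_1\times U_2\times U_3$ on which $q\circ d_1f<1$; boundedness of $B$ gives $\lambda>0$ with $\lambda B\subseteq U_2$; and then for $x$ close enough to $x_0$ one has $(x-x_0)\in \varepsilon\lambda\, U_3$, so bilinearity rescales to
\[
q\bigl(d_1f(x_0+s(x-x_0),v;x-x_0)\bigr)
=\tfrac{1}{\lambda}\cdot\varepsilon\lambda\cdot q\bigl(d_1f(\,\cdot\,,\lambda v;(\varepsilon\lambda)^{-1}(x-x_0))\bigr)<\varepsilon
\]
uniformly for $v\in B$ and $s\in[0,1]$. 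This is the missing idea: it is the \emph{linearity of $d_1f$ in the direction variable $w$}, not merely the existence of the integral representation, that converts continuity at a point into uniformity over bounded sets and explains the loss of one derivative. Your final paragraph gestures at this (``one genuinely needs the $C^1$ control'') but does not identify the mechanism; as written, the step is unjustified. Your remark about completeness and existence of the integral is a legitimate technical caveat, but harmless here since the integral in question equals $f(x,v)-f(x_0,v)\in H$.
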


\begin{lemma}
Let $F$ be a Banach space, $\mathcal{P}_E$ be the set of all continuous seminorms on $E$ and $B_1$ be the closed unit ball in $F$. If we write $\Vert\sbull\Vert_{B,q}$ for a typical seminorm on $\mathcal{L}(F,E)_b$, where $B$ is a bounded set in $F$ and $q\in\mathcal{P}_E$, then the family of seminorms $\big(\Vert\sbull\Vert_{B_1,q}\big)_{q\in\mathcal{P}_E}$ defines the locally convex topology of $\mathcal{L}(F,E)_b$. 
\end{lemma}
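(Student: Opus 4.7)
The plan is to verify the two standard inequalities that show the seminorm family $\bigl(\Vert\sbull\Vert_{B_1,q}\bigr)_{q\in\mathcal{P}_E}$ is cofinal with (and dominated by) the full defining family of $\mathcal{L}(F,E)_b$, which consists of the seminorms
\[
\Vert T\Vert_{B,q} \;:=\; \sup_{v\in B} q(T(v))
\]
as $B$ ranges over the bounded subsets of $F$ and $q$ over $\mathcal{P}_E$.

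First I would observe that, by definition, each $\Vert\sbull\Vert_{B_1,q}$ is already a continuous seminorm on $\mathcal{L}(F,E)_b$, so the locally convex topology generated by the family $\bigl(\Vert\sbull\Vert_{B_1,q}\bigr)_{q\in\mathcal{P}_E}$ is coarser than or equal to the topology of $\mathcal{L}(F,E)_b$.

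For the reverse inclusion, I would use that $F$ is normed: every bounded set $B\subseteq F$ is contained in some scalar multiple of the unit ball, i.e.\ there exists $r>0$ (for instance $r:=\sup_{v\in B}\Vert v\Vert_F$) with $B\subseteq r B_1$. Since $T\in\mathcal{L}(F,E)$ is linear and $q$ is a seminorm, this yields
\[
\Vert T\Vert_{B,q} \;=\; \sup_{v\in B} q(T(v)) \;\leq\; \sup_{w\in r B_1} q(T(w)) \;=\; r\,\Vert T\Vert_{B_1,q}
\]
for every $T\in\mathcal{L}(F,E)$. Thus every seminorm $\Vert\sbull\Vert_{B,q}$ of the full defining family is bounded above by a scalar multiple of a seminorm from $\bigl(\Vert\sbull\Vert_{B_1,q}\bigr)_{q\in\mathcal{P}_E}$, so the topology generated by the restricted family is at least as fine as the topology of $\mathcal{L}(F,E)_b$.

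Combining both directions gives the asserted equality of locally convex topologies. There is no real obstacle here; the only ingredient beyond the definitions is the characterisation of bounded subsets of a normed space as subsets of multiples of the unit ball, together with the linearity of $T$ and the absolute homogeneity of $q$.
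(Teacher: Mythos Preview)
Your argument is correct and essentially identical to the paper's: both note that the restricted family is trivially dominated by the full one, and for the converse use that any bounded $B\subseteq F$ lies in some $rB_1$ to bound $\Vert\sbull\Vert_{B,q}$ by a single seminorm from the restricted family. The only cosmetic difference is that the paper absorbs the constant into the seminorm, writing $\Vert f\Vert_{B,q}\le \Vert f\Vert_{B_1,\,r\cdot q}$, whereas you write $\Vert T\Vert_{B,q}\le r\,\Vert T\Vert_{B_1,q}$; since $r\cdot q\in\mathcal{P}_E$, these are the same inequality.
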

\begin{proof}
Obviously the topology that comes from $\big(\Vert\sbull\Vert_{B_1,q}\big)_{q\in\mathcal{P}_E}$ is coarser than the one of $\mathcal{L}(F,E)_b$. To show that it is also finer let $B \subseteq F$ be bounded and $q\in\mathcal{P}_E$. We find $r>0$ with $r\cdot B_1\supseteq B$ and calculate 
\begin{align*}
&\Vert f\Vert_{B,q}\leq \Vert f\Vert_{rB_1,q}=\sup\{q(f(x)):x\in rB_1\}=\sup\{r\cdot q(f(x)):x\in B_1\}\\
=&\Vert f\Vert_{B_1,r\cdot q}.
\end{align*}
\end{proof}

\begin{lemma}\label{iiota}
Let $F$ be a Banach space and $q$ be a continuous seminorm on $E$. For $E_q:=E/q^{-1}(0)$ and $\pi_q \colon E \rightarrow E_q,~ x \mapsto x+q^{-1}(0)$ the map $\iota\colon \mathcal{L}(F,E)_b / (\Vert\sbull\Vert_{B_1,q})^{-1}(0) \hookrightarrow \mathcal{L}\left(F,E_q\right)$, $f+(\Vert\sbull\Vert_{B_1,q}^{-1}(0)) \mapsto \pi_q\circ f$
is well-defined and a topological embedding. Moreover we get for $f\in \mathcal{L}(F,E)$, $g\in\mathcal{L}(F)$ and $\pi_{\Vert\sbull\Vert_{B_1,q}} \colon \mathcal{L}(E,F) \rightarrow \mathcal{L}(E,F)/(\Vert\sbull\Vert_{B_1,q}^{-1}(0)),~f\mapsto f+\Vert\sbull\Vert_{B_1,q}^{-1}(0)$ the equation
\begin{align}\label{GL1000}
\iota\circ\pi_{\Vert\sbull\Vert_{B_1,q}}(f\circ g)=\iota\circ\pi_{\Vert\sbull\Vert_{B_1,q}}(f)\circ g
\end{align}
\end{lemma}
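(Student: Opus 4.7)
The plan is to reduce all three claims --- well-definedness of $\iota$, its being a topological embedding, and the compatibility identity (\ref{GL1000}) --- to the single key identity
\begin{equation*}
\Vert \pi_q \circ f \Vert_{\mathrm{op}} = \Vert f \Vert_{B_1,q}
\end{equation*}
valid for every $f \in \mathcal{L}(F,E)$, where the left hand side is computed in $\mathcal{L}(F,E_q)$ with $E_q$ endowed with the quotient norm $\Vert x+q^{-1}(0)\Vert_{E_q}:=q(x)$ (a genuine norm, since we have divided out precisely the kernel of $q$). The key identity is immediate from the definition of the operator norm: $\sup_{x\in B_1}\Vert \pi_q(f(x))\Vert_{E_q}=\sup_{x\in B_1}q(f(x))=\Vert f\Vert_{B_1,q}$.

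From this identity both well-definedness and injectivity of $\iota$ are automatic, since it shows that $\Vert f_1-f_2\Vert_{B_1,q}=0$ (i.\,e.\ $f_1$ and $f_2$ represent the same class in the quotient) is equivalent to $\pi_q\circ f_1 = \pi_q\circ f_2$. To see that $\iota$ is a topological embedding I would first note that the induced norm on $\mathcal{L}(F,E)_b/\Vert\sbull\Vert_{B_1,q}^{-1}(0)$ is just $\Vert\overline{f}\Vert:=\Vert f\Vert_{B_1,q}$, since for every $h$ with $\Vert h\Vert_{B_1,q}=0$ two applications of the triangle inequality (to $f+h$ and to $f=(f+h)-h$) yield $\Vert f+h\Vert_{B_1,q}=\Vert f\Vert_{B_1,q}$. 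The key identity then exhibits $\iota$ as an isometric linear bijection onto its image inside $\mathcal{L}(F,E_q)$ equipped with the operator norm, which in particular is a topological embedding.

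Finally, the compatibility identity (\ref{GL1000}) follows by merely unravelling the definitions and invoking associativity of composition:
\begin{equation*}
\iota\circ\pi_{\Vert\sbull\Vert_{B_1,q}}(f\circ g) = \pi_q\circ(f\circ g) = (\pi_q\circ f)\circ g = \bigl(\iota\circ\pi_{\Vert\sbull\Vert_{B_1,q}}(f)\bigr)\circ g.
\end{equation*}
I do not anticipate a substantive obstacle; the only points deserving a little care are the correct identification of the quotient norm on $E_q$ with $q$ and the verification that $\Vert\sbull\Vert_{B_1,q}$ passes unchanged to the quotient of $\mathcal{L}(F,E)_b$. Once those are in place, everything else is routine linear algebra.
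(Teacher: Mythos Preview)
Your argument is correct and follows essentially the same route as the paper: the paper likewise reduces everything to the isometry identity $\Vert \pi_q\circ f\Vert_{\mathrm{op}}=\Vert f\Vert_{B_1,q}$, deduces that $\iota$ is an isometric (hence topological) embedding, and obtains (\ref{GL1000}) by unfolding the definitions. The only cosmetic difference is that the paper checks well-definedness separately via a homogeneity argument before computing the isometry, whereas you extract it directly from the key identity; both amount to the same thing.
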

\begin{proof}
Let $f\in\mathcal{L}(F,E)$ with $\Vert f\Vert_{B_1,q}=0$. For $x\in F\setminus\{0\}$, we get 
\begin{align*}
q\circ f(x)=\frac{1}{\Vert x\Vert}\cdot q\circ f\left(\frac{x}{\Vert x\Vert}\right)=0.
\end{align*}
Hence $\iota$ is well-defined. To show that $\iota$ is an isometry  we choose $f\in \mathcal{L}(F,E)$  and calculate
\begin{align*}
&\Vert \pi_q\circ f\Vert_{op} = \sup\{q\circ f(x):x\in B_1\} = \Vert f\Vert_{B_1,q}\\
=&\Vert f+(\Vert\sbull\Vert_{B_1 , q})^{-1}(0)\Vert
\end{align*}
To show (\ref{GL1000}) we calculate 
\begin{align*}
\iota\circ\pi_{\Vert\sbull\Vert_{B_1,q}}(f\circ g)=\pi_q\circ f\circ g=\iota\circ\pi_{\Vert\sbull\Vert_{B_1,q}}(f)\circ g.
\end{align*}
\end{proof}

The following Lemma \ref{Eindeutigkeit1} varies an observation by Gl{\"o}ckner.
\begin{lemma}\label{Eindeutigkeit1}
If $F$ is a Banach space, $\lambda \colon I\rightarrow \mathcal{L}(F)$ is a $C^1$-curve and $\mu_0\in\mathcal{L}(F,E)$, then the initial value problem
\begin{align}
\begin{cases} \label{AWP-TYP1000}
\varphi'(t)&=\varphi(t)\circ\lambda(t)\\
\varphi(0)&=\mu_0
\end{cases}
\end{align}
in $\mathcal{L}(F,E)$ has not more than one solution.
\end{lemma}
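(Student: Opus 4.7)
The plan is to reduce the uniqueness question to the standard Banach-space Gr\"onwall argument by projecting the ODE along the quotients $\pi_q\colon E \to E_q$ provided by Lemma \ref{iiota}. Suppose $\varphi_1,\varphi_2$ are two solutions of (\ref{AWP-TYP1000}) and set $\psi := \varphi_1-\varphi_2$. Since the equation is linear in $\varphi$, the curve $\psi$ is $C^1$ in $\mathcal{L}(F,E)_b$ and satisfies $\psi'(t)=\psi(t)\circ\lambda(t)$ with $\psi(0)=0$. By the preceding lemma, the seminorms $\|\cdot\|_{B_1,q}$ for $q\in\mathcal{P}_E$ generate the topology of $\mathcal{L}(F,E)_b$, so they separate points (as $E$ is Hausdorff). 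It therefore suffices to show $\|\psi(t)\|_{B_1,q}=0$ for every $t\in I$ and every continuous seminorm $q$ on $E$.

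Fix such a $q$ and set $\eta(t) := \pi_q\circ\psi(t)\in\mathcal{L}(F,E_q)$. Since $\pi_q$ is continuous and linear and $\psi$ is $C^1$ into $\mathcal{L}(F,E)_b$, the curve $\eta$ is $C^1$ with $\eta'(t)=\pi_q\circ\psi'(t)$. Using identity \eqref{GL1000} from Lemma \ref{iiota}, this transforms into
$$\eta'(t) \;=\; \pi_q \circ \psi(t) \circ \lambda(t) \;=\; \eta(t)\circ\lambda(t), \qquad \eta(0)=0,$$
i.e.\ $\eta$ solves in $\mathcal{L}(F,E_q)$ the same type of linear ODE with vanishing initial datum. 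Moreover, Lemma \ref{iiota} gives the isometry $\|\eta(t)\|_{op}=\|\psi(t)\|_{B_1,q}$, and submultiplicativity of the operator norm yields $\|\eta'(t)\|_{op}\le\|\lambda(t)\|_{op}\cdot\|\eta(t)\|_{op}$.

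Since $\lambda\colon I\to\mathcal{L}(F)$ is continuous, $\|\lambda(\sbull)\|_{op}$ is bounded by some constant $C>0$ on any compact subinterval $J\subseteq I$ containing $0$. The fundamental theorem of calculus applied in the Banach space $\mathcal{L}(F,E_q)$ (or, if $E_q$ is not itself complete, in its completion, into which $\mathcal{L}(F,E_q)$ isometrically embeds) gives
$$\|\eta(t)\|_{op}\;\le\;C\int_{0}^{|t|}\|\eta(s)\|_{op}\,ds \qquad\text{for } t\in J,$$
and Gr\"onwall's inequality forces $\eta\equiv 0$ on $J$, hence on $I$. Therefore $\|\psi(t)\|_{B_1,q}=\|\eta(t)\|_{op}=0$, and letting $q$ vary over $\mathcal{P}_E$ gives $\psi\equiv 0$, i.e.\ $\varphi_1=\varphi_2$.

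The only obstacle is technical bookkeeping: one has to check that $\pi_q$ commutes with the derivative (immediate from continuity and linearity), that \eqref{GL1000} correctly transforms the ODE after applying $\pi_q$, and that the norm identity from Lemma \ref{iiota} converts the Banach-space vanishing of $\eta$ back into the vanishing of the locally convex seminorm $\|\psi(\sbull)\|_{B_1,q}$. There is no deeper analytic difficulty, because the equation is linear and the initial datum vanishes, so Gr\"onwall finishes the argument once one is inside a normed space.
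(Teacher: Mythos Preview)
Your proof is correct and follows essentially the same route as the paper's: both reduce via the seminorm quotients $\pi_q\colon E\to E_q$ (i.e.\ via Lemma~\ref{iiota}) to an equation in the normed space $\mathcal{L}(F,E_q)$ and use Banach-space uniqueness there. The only cosmetic difference is that you subtract the two solutions first and run an explicit Gr\"onwall estimate on the difference, whereas the paper pushes each $\varphi_i$ separately to $\mathcal{L}(F,E_q)$ and invokes the standard Lipschitz uniqueness theorem for ODEs; your remark about passing to the completion of $E_q$ is in fact more careful than the paper, which simply asserts that $\mathcal{L}(F,E_q)$ is a Banach space.
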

\begin{proof}
Let $\varphi_1,\varphi_2\colon]-\varepsilon, \varepsilon[$ be solutions of the initial value problem (\ref{AWP-TYP1000}), $q$ a continuous seminorm of $E$. Moreover let $\pi_{\Vert\sbull\Vert_{B_1,q}}$ and $\iota$ be like in Lemma \ref{iiota}. For $i=1,2$ we define the map $\varphi_{i,q}\colon ]-\varepsilon,\varepsilon[ \rightarrow \mathcal{L}(F,E_q)$, $t\mapsto \iota\circ \pi_{\Vert\sbull\Vert_{B_1 , q}}\circ \varphi_i$ and get
\begin{align*}
&\varphi_{i,p}'(t)=\iota\circ \pi_{\Vert\sbull\Vert_{B_1,q}}(\varphi_i'(t))
=\iota\circ \pi_{\Vert\sbull\Vert_{B_1,q}}(\varphi_i(t)\circ \lambda(t))\\
=&\iota\circ \pi_{\Vert\sbull\Vert_{B_1,q}}(\varphi_i(t))\circ \lambda(t) =\varphi_{i,q}(t)\circ \lambda(t)
\end{align*}
and $\varphi_{i,q}(0)=\pi_q\circ \mu_0$. The composition $\mathcal{L}(F) \times \mathcal{L}(F,E_q) \rightarrow \mathcal{L}(F,E_q),~ (\mu,\psi) \mapsto \psi \circ \mu$ is continuous bilinear. Hence $f\colon I \times \mathcal{L}(F,E_q) \rightarrow \mathcal{L}(F,E_q),~ (t,\psi) \mapsto \psi \circ \lambda(t)$ is Fr{\'e}chet-differentiable. Thus $f$ is continuous and locally Lipschitz-continuous in the second argument. Because $\mathcal{L}(F,E_q)$ is a Banach space, we have $\varphi_{1,q}=\varphi_{2,q}$. Hence $\pi_{\Vert\sbull\Vert_{B_1,q}}\circ \varphi_1=\pi_{\Vert\sbull\Vert_{B_1,q}}\circ \varphi_2$. Because $q$ was an arbitrary continuous seminorm of $E$, we get $\varphi_1=\varphi_2$.
\end{proof}

As mentioned above the following Theorem \ref{BanachBanach} is inspired by \cite[Section 4]{Stefan} and \cite[Theorem 2]{Teichmann} respectively the authors Theorem \cite[Theorem 2.6]{Eyni}.
\begin{theorem}\label{BanachBanach}
Let $F$ be a complemented subspace of $E$ that is a Banach space, $r\geq 2$ and $D$ be an involutiv subbundle of $TM$ with typical fibre $F$. If given $p_0\in M$ there exists an open $p_0$-neighbourhood $U\subseteq M$ and a $C^{r-1}$-vector field $X \colon U\times F \rightarrow TU$  with parameters in $F$ such that
\begin{compactenum}
\item the map $\check{X}\colon F \rightarrow \mathcal{V}(U),~ v\mapsto X(\bl,v)$ is linear,\label{BanachBanach1}
\item we have $\im(X) \subseteq D$,\label{BanachBanach2}
\item the map $X(p_0,\sbull)|^{D_{p_0}} \colon F \rightarrow D_{p_0}$ is an isomorphism of topological vector spaces and\label{BanachBanach3}
\item the vector field $X$ provides a local flow with parameters,\label{BanachBanach4}
\end{compactenum}
then $D$ is a Frobenius distribution. 
\end{theorem}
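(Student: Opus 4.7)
By Lemma \ref{localfb} it suffices to exhibit a local Frobenius chart around $p_0$; following Stefan and Teichmann, the plan is to ``exponentiate'' $X$ by taking its time-one flow, using a topological complement $H$ of $F$ in $E$ to parametrise transversal initial data. Fix a subbundle chart $\phi\colon U_\phi\to V_\phi\subseteq F\oplus H$ of $M$ at $p_0$ with $\phi(p_0)=0$ that trivialises $D$ as $V_\phi\times F$, and shrink so that $V_\phi=V_1\times V_2$ with $V_1\subseteq F$, $V_2\subseteq H$ open. Push $X$ forward to a $C^{r-1}$-vector field with parameters $\tilde X\colon V_\phi\times F\to E$. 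Hypothesis (\ref{BanachBanach2}) combined with the subbundle chart gives $\tilde X(x,v)\in F$ for all $x,v$, while (\ref{BanachBanach1}), (\ref{BanachBanach3}), (\ref{BanachBanach4}) transfer. Let $\tilde\Phi$ denote the local flow with parameters of $\tilde X$ and set
\begin{equation*}
\Psi(v,h):=\tilde\Phi(1,v,(0,h))
\end{equation*}
on a suitable open $(0,0)$-neighbourhood in $F\oplus H$; this will yield the desired chart.

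\textbf{Triangular form and the parametric IFT.} Since $\tilde X$ takes values in $F$, the $H$-component of $\tilde\Phi(\bl,v,(0,h))$ has vanishing $s$-derivative, so $\pi_H\circ\Psi(v,h)=h$, giving the triangular form $\Psi(v,h)=(\Psi_1(v,h),h)$. The time-rescaling identity $\tilde\Phi(s,tw,x)=\tilde\Phi(ts,w,x)$, which follows from $\tilde X_{tw}=t\tilde X_w$ by (\ref{BanachBanach1}) and uniqueness of integral curves, yields $\partial_v\Psi(0,0)\cdot w=\tilde X(0,w)$, a topological isomorphism $F\to F$ by (\ref{BanachBanach3}); and $\Psi(0,h)=(0,h)$ (since $\tilde X_0=0$) gives $\partial_h\Psi(0,0)\cdot u=(0,u)$, so $d\Psi(0,0)$ is an isomorphism of $F\oplus H$. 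To promote this to a local diffeomorphism without appealing to a classical IFT in locally convex spaces, set $f(h,v):=\Psi_1(v,h)$ and apply Theorem \ref{SatzUeberUKF} with $H$-parameter $h$ and Banach variable $v$, the additional hypothesis for $r=1$ being satisfied on a sufficiently small neighbourhood by continuity of $f'$. Conclusion (c) of that theorem, combined with $\pi_H\circ\Psi=h$, shows that $\Psi$ itself is a local $C^{r-1}$-diffeomorphism onto an open $0$-neighbourhood in $V_\phi$.

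\textbf{Slices and conclusion.} Set $\varphi:=\Psi^{-1}\circ\phi$; after possibly shrinking, its codomain has the form $V_1'\times V_2'\subseteq F\oplus H$. For any $\bar h\in V_2'$, the tangent space of the slice $S_{\bar h}=\varphi^{-1}(V_1'\times\{\bar h\})$ at $\varphi^{-1}(v,\bar h)$ is the image of $\partial_v\Psi(v,\bar h)$. The triangular form places this image in $F\times\{0\}$, and the block-triangular differential
$d\Psi(v,\bar h)=\bigl(\begin{smallmatrix}A&B\\ 0&\id\end{smallmatrix}\bigr)$
being invertible forces $A=\partial_v\Psi_1(v,\bar h)$ to be an isomorphism of $F$, so the image is exactly $F\times\{0\}$. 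Transporting by $T\phi^{-1}$ and invoking the subbundle-chart property identifies this with $D_{\varphi^{-1}(v,\bar h)}$, hence by Remark \ref{FB-charakter}, $\varphi$ is a Frobenius chart. Since $p_0$ was arbitrary, Lemma \ref{localfb} gives that $D$ is a Frobenius distribution.

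\textbf{Main obstacle.} The principal technical difficulty is the absence of a classical IFT in locally convex spaces: the triangular form $\Psi(v,h)=(\Psi_1(v,h),h)$, arising crucially from $\tilde X\in F$ in the subbundle chart, is precisely what allows the parametric IFT (Theorem \ref{SatzUeberUKF}) to be applied by isolating the Banach direction $F$. Secondary obstacles are squeezing enough regularity out of the $C^{r-1}$-flow with parameters for the argument to go through, and arranging domains so that the time-one flow and the time-rescaling identity are simultaneously defined throughout.
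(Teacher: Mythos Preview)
Your argument contains a circular step at the very beginning. You ``fix a subbundle chart $\phi\colon U_\phi\to V_\phi$ of $M$ at $p_0$ that trivialises $D$ as $V_\phi\times F$,'' and then use this to conclude $\tilde X(x,v)\in F$ for all $x$. But a chart of the \emph{manifold} with $d\phi(D_p)=F$ for every $p$ in a neighbourhood is, by Remark~\ref{FB-charakter}, precisely a Frobenius chart for $D$; its existence is the statement you are trying to prove. What the subbundle hypothesis actually gives you is a \emph{fibrewise} trivialisation $\psi\colon U\times E\to U\times E$ with $\psi(D)=U\times F$; this $\psi$ is not induced by any chart of $M$, and if you push $X$ forward by a genuine manifold chart $\phi$ you only get $d\phi(D_{p_0})=F$ at the single point $p_0$, so the ``triangular form'' $\pi_H\circ\Psi(v,h)=h$ is not available. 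A tell-tale sign that something is missing is that your argument never invokes the involutivity of $D$, which by Theorem~\ref{notwendig} is a necessary condition; any valid proof must use it somewhere.

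The paper's proof confronts exactly this issue. It works in a chart with $D_0=F$ only at the origin, and then spends the bulk of the argument (Assertions I and II) proving that the flow $\Phi$ preserves $D$ in the sense that $d_2\Phi(t,y,v;\sbull)(D_y)=D_{\Phi(t,y,v)}$ and $d_3\Phi(t,y,u;\sbull)(F)\subseteq D_{\Phi(t,y,u)}$. Involutivity enters when one needs $[X_v,X_w](x_t)\in D_{x_t}$ to define the curve $B(t)$ in the ODE governing $(\Phi_t^v)^\ast X_w$; uniqueness for that ODE in $\mathcal L(F,E)$ (Lemma~\ref{Eindeutigkeit1}) then forces the solution to stay in $\mathcal L(F,D_x)$. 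Only after these flow-invariance statements are established does the parametric inverse function theorem (Theorem~\ref{SatzUeberUKF}) come into play, applied not to $\Psi_1$ directly but to the $F$-component of $(z,w,v)\mapsto\Phi(t,(z,v),w)$ for a suitably small fixed $t$. Your overall strategy (exponentiate $X$, use the Banach-direction IFT) is the right one, but the step that makes it work---showing the flow respects $D$ without already having a Frobenius chart---is the heart of the matter and cannot be assumed away.
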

\begin{proof}
Let $p_0 \in M$ and $\phi$ be a chart around $p_0$ with $\phi(p_0)=0_E$ and $d\phi(D_{p_0})=F$. Because of Lemma \ref{localfb} it is enough to show the statement in the local chart $\phi$. This means we have the following situation: The set $U$ is an open $0$-neighbourhood in $E$. The vector distribution $D\subseteq U\times E$ is a subbundle of $U\times E$ with typical fibre $F$. Hence given $x\in U$ we find a $C^r$-diffeomorphism $\psi\colon U\times E\rightarrow U\times E$ such that $\psi(\{y\}\times E)= \{y\}\times E$, $\pr_2 \circ \psi(y,\sbull)\colon E\rightarrow E$ is an isomorphism of topological vector spaces and $\psi(D)=U\times F$. Given $x\in U$ we write $D_x$ for the subspace ${\pr}_2(D\cap(\{x\}\times E))$ of $E$. By choice of $\phi$ we have $D_{0_E}=F$. 
In abuse of notation we write $X$ of the local representative of $X$ in the chart $\phi$. Hence $X \times F \rightarrow E$ is a $C^r$-map such that
\begin{compactenum}
\item $\check{X} \colon F \rightarrow C^{r-1} (U,E)$ is linear,
\item $X(p,v) \in D_p$ for all $p \in U$ and $v \in F$,\label{imX}
\item $X(0,\sbull)|^{D_0} \colon F \rightarrow D_0=F$ is an isomorphism of topological vector spaces
\item $X$ provides a local flow with parameters. 
\end{compactenum}
We write $\Phi \colon \Omega \rightarrow U$ for the global flow with parameters of $X$. For convenience  we write $X_v:=X(\sbull,v)$, $\Phi^v:=\Phi(\sbull,\sbull,v)$ and $\Omega^v:=\{(t,x)\in \mathbb{R}\times U: (t,x,v) \in \Omega\}$ for $v \in F$ and $t \in \mathbb{R}$.
Since $X\colon U\times F\rightarrow E$ is a $C^1$-map, also $\tilde{X}\colon U\times F\rightarrow F$, $(x,v)\mapsto \psi(x,X(x,v))$ is of class $C^1$. This provides the continuity of $\check{\tilde{X}}\colon U\rightarrow \mathcal{L}(F)$, $x\mapsto \psi(x,\sbull)\circ X(x,\sbull)$, because of Lemma \ref{cob}. Since $X(0_E,\sbull)$ is an isomorphism of topological vector spaces, we assume that $X(x,\sbull)|^{D_x}\colon F \rightarrow D_x$ is an isomorphism of topological vector spaces for all $x\in U$.

Moreover we assume $X(0,\sbull)= \id_F$. We divide the proof in two steps. The first one is to show the following assertion.\\
{\it Assertion I:} Given a vector field $Y\colon U\rightarrow E$, with $Y(x)\in D_x$ for all $x\in U$, also $\big((\Phi_t^{v})^\ast Y\big)(x)\in D_x$ for all $x\in U$, $t \in \mathbb{R}$ and $v\in F$.
Moreover we have 
\begin{align}\label{d2Dy}
d_2\Phi(t,y,v;\sbull)(D_y) = D_{\Phi(t,y,v)}
\end{align}
for $(t,y,v)\in \Omega$.\\
{\it Reason:} For $v \in F$ and $t \in \mathbb{R}$ the vector field $(\Phi_t^v)^\ast Y$ is defined on $\Omega^v_t:=\{x \in U: (t,x,v) \in \Omega\}$. If $\Omega^v_t=\emptyset$ the assertion is clear. Now let $x\in \Omega_t^v$. We have to show  $(\Phi_t^v)^\ast Y(x)\in D_x$.
It exists $w\in F$ with $X(\Phi(t,x,v),w)=Y(\Phi(t,x,v))$. Thus
\begin{align*}
&(\Phi_t^{v})^\ast Y(x) = (d\Phi_t^{v}(x,\sbull))^{-1}\circ Y\circ \Phi_t^{v}(x) =(d\Phi_t^{v}(x,\sbull))^{-1}\circ X_w\circ \Phi_t^{v}(x)\\
=&(\Phi_t^{v})^\ast X_w(x).
\end{align*}
So we only have to show $(\Phi_t^{v})^\ast X_{w}(x)\in D_{x}$. On the interval $I_{v,x}:=\{t\in \mathbb{R}: (t,x,v) \in \Omega\}$ we have 
\begin{align*}
&\frac{\partial}{\partial t}\Big((\Phi_t^{v})^\ast X_{w}(x)\Big)
=\frac{\partial}{\partial s}\Big|_{s=0}\left((\Phi^{v}_{t+s})^{\ast}X_{w}(x)\right)
=\frac{\partial}{\partial s}\Big|_{s=0}\left((\Phi^{v}_{s})^{\ast}(\Phi^{v}_{t})^{\ast}X_{w}(x)\right)\\
=&\left[X_{v}, (\Phi^{v}_{t})^{\ast}X_{w}\right](x) 
=\left[(\Phi^{v}_{t})^{\ast}X_{v}, (\Phi^{v}_{t})^{\ast}X_{w}\right](x)
=(\Phi^{v}_{t})^{\ast}\left[X_{v}, X_{w}\right](x)
\end{align*}
with the help of Lemma \ref{AblLieKlam}, Lemma \ref{ENDL3} and Lemma \ref{ENDL4}.
%
%
Now we define the curve $g_w\colon I_{v,x}\rightarrow E$, $g_w(t):=(\Phi_t^{v})^\ast X_w(x)$ and write $\lambda_x:=X(x,\sbull)|^{D_x}$. Moreover we define $x_t:=\Phi_t^v(x)$. From $[X_v,X_w](x_t) = X(x_t,\lambda_{x_t}^{-1}([X_v,X_w](x_t)))$ we conclude
\begin{align*}
g_w'(t)=(\Phi_t^{v})^\ast [X_v,X_w](x) =(\Phi_t^{v})^\ast X_{\lambda^{-1}_{x_t}([X_v,X_w](x_t))}(x) =g_{\lambda_{x_t}^{-1}([X_v,X_w](x_t))}(t).
\end{align*}
For $t\in I_{v,x}$ we define the maps $A(t)\colon F\rightarrow E$, $u\mapsto g_u(t)$ and $B(t) \colon F\rightarrow F$, $w\mapsto \lambda_{x_t}^{-1}([X_v,X_w](x_t))$. We also define $A\colon I_{v,x} \rightarrow \mathcal{L}(F,E)$, $t\mapsto A(t)$ and $B \colon I_{v,x} \rightarrow \mathcal{L}(F)$, $t \mapsto B(t)$. The curves $A$ and $B$ are of class $C^1$, because $I_{v,x} \times F \rightarrow E,~ (t,w)\mapsto g_w(t)$  and $I_{v,x} \times F \rightarrow F,~ (t,w)\mapsto  \lambda_{x_t}^{-1}([X_v,X_w](x_t))$ are of class $C^{r-1}$ and $r\geq 2$. We get
\begin{align*}
&A'(t).w=\varepsilon_w(A'(t))=d(\varepsilon_w\circ A)(t,1) =\frac{\partial}{\partial t}\big(g_w(t)\big) =g_{\lambda_{x_t}^{-1}([X_v,X_w](x_t))}(t)\\
=&A(t).\lambda_{x_t}^{-1}([X_v,X_w](x_t)) =(A(t)\circ B(t))(w)
\end{align*}
Hence $A$ solves the initial value problem
\begin{align}
\begin{cases}
\varphi'(t)&=\varphi(t)\circ B(t)\\
\varphi(0)&=X(x,\sbull)
\end{cases}
\label{UUU}
\end{align}
in $\mathcal{L}(F,E)$. It exists a solution of the initial value problem (\ref{UUU}) in $\mathcal{L}(F,D_{x})$. From Lemma \ref{Eindeutigkeit1}, we conclude that the image of $A$ lies in $\mathcal{L}(F,D_{x})$. 

It remains to show (\ref{d2Dy}). To this end let $(t,y,v) \in \Omega$ and $f \colon U \rightarrow E$ be a $C^r$-map with $f(p) \in D_p$ for all $p\in U$. We define $x:=\Phi(t,y,v)$ and get $(-t,x,v) \in\Omega$. Hence, $d\Phi^v_t(\Phi_{-t}^v(x),f(\Phi_{-t}^v(x)))\in D_x$. We conclude $d\Phi_t^v(y,f(y))\in D_{\Phi_t^v(y)}$. This shows assertion I, because we can choose any $f$ with the mentioned condition.

Our second aim is to show the following assertion.\\
{\it Assertion II:} Given $(t,y,u)\in \Omega$ we have
\begin{align}
d_3\Phi(t,y,u;\sbull)(F)\subseteq D_{\Phi(t,y,u)}\label{BEHII}.
\end{align}
for the map $d_3\Phi(t,y,u;\sbull)\colon F\rightarrow E$.\\
{\it Reason:} We have 
\begin{align}
d_1\Phi(t,y,u;1)&=X(\Phi(t,y,u),u)\text{ and }\label{BEHII.1}\\
\Phi(0,y,u)&=y.
\end{align}
By differentiating the right-hand side of (\ref{BEHII.1}) in $y$ in direction $h\in E$, we get
\begin{align*}
d_y\big(X(\Phi(t,y,u),u)\big)(y,h)=d_1X\big(\Phi(t,y,u),u;d_2\Phi(t,y,u;h)\big).
\end{align*}
Differentiation of the left-hand side of (\ref{BEHII.1}) in $y$ in direction $h\in E$ leads to 
$\frac{\partial}{\partial s}\frac{\partial}{\partial t}(\Phi(t,y+sh,u))=\frac{\partial}{\partial t}d_2\Phi(t,y,u;h)$.
Here we used the Schwarz-theorem. We conclude
\begin{align}
\frac{\partial}{\partial t}d_2\Phi(t,y,u;h)&=d_1X\big(\Phi(t,y,u),u;d_2\Phi(t,y,u;h)\big)\label{AWP0}\\
d_2\Phi(0,y,u;h)&=h.
\end{align}
Now we differentiate the right-hand side of (\ref{BEHII.1}) in $u$ in direction $h\in F$ and get
\begin{align*}
&d_u\big(X(\Phi(t,y,u),u)\big)(u,h)=dX\big((\Phi(t,y,u),u);(d_3\Phi(t,y,u;h),h)\big)\\
=&d_2X\big(\Phi(t,y,u),u;h\big)+d_1X\big(\Phi(t,y,u),u;d_3\Phi(t,y,u;h)\big)\\
=&X\big(\Phi(t,y,u),h\big)+d_1X\big(\Phi(t,y,u),u;d_3\Phi(t,y,u;h)\big)
\end{align*}
Differentiation of the left-hand side of (\ref{BEHII.1}) leads to 
\begin{align*}
\frac{\partial}{\partial s}\frac{\partial}{\partial t}(\Phi(t,y,u+sh))=\frac{\partial}{\partial t}d_3\Phi(t,y,u;h).
\end{align*}
Hence we get
\begin{align*}
\frac{\partial}{\partial t}d_3\Phi(t,y,u;h)&=X\big(\Phi(t,y,u),h\big)+d_1X\big(\Phi(t,y,u),u;d_3\Phi(t,y,u;h)\big)\text{ and}\\
d_3\Phi(0,y,u;h)&=0.
\end{align*}
Thus $t\mapsto d_3\Phi(t,y,u;\sbull)$ solves the initial value problem
\begin{align}
\begin{cases}\label{AWP}
\sigma'(t)&=X\big(\Phi(t,y,u),\sbull\big)+d_1X\big(\Phi(t,y,u),u;\sbull\big)\circ \sigma(t)\\
\sigma(0)&=0
\end{cases}
\end{align}
in $\mathcal{L}(F,E)$.

Because $\Phi(t,\sbull,u)$ is a diffeomorphism, we get $d_2\Phi(t,y,u;\sbull)\in\GL(E)$. With $\Phi(t,\Phi(-t,y,u),u)=y$ we get
\begin{align}\label{BanInv}
\big(d_2\Phi(t,y,u;\sbull)\big)^{-1}=d_2\Phi(-t,\Phi(t,y,u),u;\sbull).
\end{align} 
The map $f\colon I\times F\rightarrow E$, $(t,v)\mapsto d_2\Phi(-t,\Phi(t,y,u),u;X(\Phi(t,y,u),v))$ is of class $C^1$ and $\int_0^tf(s,v)ds = t\cdot \int_0^1 f(ts,v)ds$.
Thus $f_1\colon I\times F \rightarrow E$, $(t,v) \mapsto \int_0^tf(s,v)ds$ is of class $C^1$. We conclude that $f_2\colon I\times F \rightarrow E$, $(t,v) \mapsto d_2\Phi(t,y,u;\int_0^tf(s,v)ds)$ is of class $C^1$. Hence 
$\eta:=\widecheck{f_2}\colon I\rightarrow \mathcal{L}(F,E)_c\subseteq C(F,E)_{c.o.}$
is a $C^1$-map. We want to show that $\eta$ is a solution of the initial value problem (\ref{AWP}). Given $v\in F$ the evaluation map $\varepsilon\colon \mathcal{L}(F,E)\rightarrow E$, $\lambda\mapsto\lambda(v)$ is continuous linear. Therefore we only have to show that for all $v\in F$ the curve $\tau\colon I \rightarrow E$, $t\mapsto d_2\Phi(t,y,u;\int_0^tf(s,v)ds)$ is a solution of the initial value problem 
\begin{align}
\begin{cases}\label{AWP2}
\frac{d}{dt}\sigma(t)&=d_1X(\Phi(t,y,u),u;\sigma(t))+X(\Phi(t,y,u),v)\\
\sigma(0)&=0,
\end{cases}
\end{align}
where $\sigma$ is a curve in $E$. We define the map $H\colon I\times E \rightarrow E$, $(t,w) \mapsto d_2 \Phi(t,y,u;w)$ and get 
\begin{align*}
&\tau'(t)=\frac{\partial}{\partial t}(H\circ ({\id}_I(t),f_1(t,v))) = d\big(H\circ ({\id}_I,f_1(\sbull,v))\big)(t,1)\\
=&dH( (t,f_1(t,v));(1,f(t,v)))\\
=&d_1H\big(t,f_1(t,v);1\big)+d_2H\big(t,f_1(t,v);f(t,v)\big).
\end{align*}
On the one hand we have 
\begin{align}
&d_2H(t,f_1(t,v);f(t,v)) = H(t,f(t,v))\nonumber\\
=& d_2\Phi\big(t,y,u;\underbracket{d_2\Phi(-t,\Phi(t,y,u),u;X(\Phi(s,y,u),v))}_{=(d_2\Phi(t,y,u;\sbull))^{-1}(X(\Phi(s,y,u),v))}\big) = X(\Phi(s,y,u),v)\label{TEIL1}
\end{align}
and on the other 
\begin{align}
&d_1H(t,f_1(t,v);1) = \frac{\partial}{\partial h}\Big(d_2\Phi(h,y,u;f_1(t,v))\Big)\Big|_{h=t}\nonumber\\
=&\frac{\partial}{\partial h'}\Big(\frac{\partial}{\partial h}\Big(\Phi(h,y+h'\cdot f_1(t,v),u)\Big)\Big|_{h=t}\Big)\Big|_{h'=0}\nonumber\\
=&\frac{\partial}{\partial h'}\Big(X(\Phi(t,y+h'\cdot f_1(t,v),u),u)\Big)\Big|_{h'=0}\nonumber\\
=&d_1X(\Phi(t,y,u),u;d_2\Phi(t,y,u;f_1(t,v)))
=d_1X(\Phi(t,y,u),u;\tau(t)).\label{TEIL2}
\end{align}
Thus $\tau$ is a solution of (\ref{AWP2}) and so $\eta$ solves the initial value problem (\ref{AWP}).

Now we show that the solution of (\ref{AWP}) is unique. It is enough to show that for every $h\in F$ the initial value problem 
\begin{align*}
\begin{cases}
g'(t)&=X(\Phi(t,y,u),h)+d_1X(\Phi(t,y,u),u;g(t))\\
g(0)&=0,
\end{cases}
\end{align*}
where $g$ is a curve in $E$, has a unique solution. Obviously it is sufficient to show that the initial value problem
\begin{align}
\begin{cases}
g'(t)&=d_1X(\Phi(t,y,u),u;g(t))\\
g(0)&=0
\end{cases}
\label{AWPinE}
\end{align}
has at most one solution.
%
%
We define $\tilde{\Omega}:=\{(t,y) \in \mathbb{R}\times U: (t,y,u) \in \Omega\}$ and consider the map $\tilde{\Phi}\colon \tilde{\Omega} \times E \rightarrow U\times E$, $(t,y,w) \mapsto T\Phi_t^{u}(y,w)$ which is a local $C^r$-action on $U\times E$, because of the chain rule of tangential-maps. The vector field
$\tilde{X}\colon U\times E \rightarrow E\times E$, $(y,w) \mapsto(X(y,u),d_1X(y,u;w))$
has the local flow $\tilde{\Phi}$, because with (\ref{AWP0}) we get
\begin{align*}
&\frac{d}{dt}\big(\tilde{\Phi}(t,y,w)\big)\Big|_{t=0} = \frac{d}{dt}\big(\Phi_t^{u}(y),d_2\Phi(t,y,u;w)\big)\Big|_{t=0}\\
=&\big(X(y,u),d_1X(y,u;w)\big).
\end{align*}
Now let $g_1$ and $g_2$ be solutions of (\ref{AWPinE}), defined on the interval $I$. For $i=1,2$ the curve $G_i\colon I \rightarrow U\times E$, $t \mapsto (\Phi(t,y,u),g_i(t))$ is a solution of the initial value problem 
\begin{align*}
\begin{cases}
G_i'(t)=\tilde{X}(G_i(t))\\
G_i(0)=(y,0),
\end{cases}
\end{align*}
because 
\begin{align*}
G_i'(t)=\big(X(\Phi(t,y,u),u) , d_1X(\Phi(t,y,u),u;g_i(t))\big)=\tilde{X}(G_i(t)).
\end{align*}
Hence $G_1(t)=G_2(t)$ and so $g_1(t)=g_2(t)$. We get the uniqueness-statement.\\
Now we conclude
\begin{align}\label{intdar}
d_3\Phi(t,y,u;\sbull)=\eta(t)= \left(v\mapsto d_2\Phi\left(t,y,u;\int_0^tf(s,v)ds\right)\right).
\end{align}
With (\ref{d2Dy}) we get $D_y=\big(d_2\Phi(s,y,u;\sbull)\big)^{-1}(D_{\Phi(s,y,u)})$. From the condition (\ref{imX}) we get $X(\Phi(s,y,u),\sbull).F\subseteq D_{\Phi(s,y,u)}$. With (\ref{BanInv}) we conclude $\Image(f)\subseteq D_y$. Hence with (\ref{intdar}) we get $d_3\Phi(t,y,u;\sbull)(F)\subseteq d_2\Phi(t,y,u;\sbull).D_y=D_{\Phi(s,y,u)}$.

Now we construct a Frobenius chart around $0_E$. To this end let $\tilde{F}$ be the topological vector complement of $F$ in $E$. We choose open $0$-neighbourhoods $V^{(1)} \subseteq \tilde{F}$, $V^{(2)} \subseteq F$ and a symmetric interval $I \subseteq \mathbb{R}$ such that $V:=V^{(1)}\times V^{(2)} \subseteq U$  and $I\times V \times V^{(2)} \subseteq \Omega$.

We have $\frac{\partial}{\partial s} \Phi(s,0,0) =X(\Phi(t,0,0) ,0)=0$ and $\Phi(0,0,0)=0$. Hence $\Phi(t,0,0)=0$ for all $t \in I$.

We saw $\Image(f)\subseteq D_y$ in the calculation above. Taking $y=0_E$ and $u=0_F$ we get $d_2\Phi(-s,0_E,0_F;v)\in F$ for $s \in I$ and $v \in F$. We define the map $\lambda \colon I\times F \rightarrow F$, $(s,v) \mapsto d_2\Phi(-s,0_E,0_F;v)$. Because $\check{\lambda} \colon I \rightarrow \mathcal{L}(F)$, $s \mapsto d_2\Phi(-s,0,0,\sbull)$ is continuous, the map $\tilde{\lambda} \colon I\times I \rightarrow \mathcal{L}(F)$, $(t,s) \mapsto t\cdot \check{\lambda}(s)$ is continuous and because $\tilde{\lambda}(0,0) = \id_F$ we can choose $0<t<1$ with $\Vert t\cdot\lambda(s,\sbull)-\id_F\Vert_{op}<\frac{1}{2}$ for all $ s\in [0,t]$. We show 
\begin{align}
d_3\Phi(t,0_E,0_F;\sbull) \in \mathcal{L}(F)^\ast.\label{L(F)Stern}
\end{align}
With (\ref{intdar}) we get 
$d_3\Phi(t,0,0;\sbull) = v \mapsto d_2\Phi(t,0_E,0_F; \int_0^t \lambda(s,v)ds)$.
The map $d_2\Phi(t,0,0;\sbull) \colon E \rightarrow E$ is an isomorphism of topological vector spaces and so we see $d_2\Phi(t,0,0;\sbull)|_F^F \in \mathcal{L}(F)^\ast$ with Assertion I. Hence it remains to show that the map $\mu \colon F\rightarrow F, v \mapsto \int_0^t \lambda(s,v)ds$ is an isomorphism. To show $\Vert \mu-\id_F\Vert_{op}<1$ we choose $v \in F$ and calculate 
\begin{align*}
&\left\Vert \int_0^t \lambda(s,v) ds-v \right\Vert_F =  \left\Vert \int_0^t \lambda(s,v) -\frac{v}{t} ds \right\Vert_F \leq  \int_0^t \left\Vert \lambda(s,v) -\frac{v}{t} \right\Vert_F ds\\
=&\frac{1}{t} \int_0^t \Vert t\lambda(s,v) -v \Vert_F ds \leq \frac{\Vert v\Vert_F}{t} \int_0^t \underbracket{\Vert t\lambda(s,\sbull) -\id_F \Vert_{op}}_{<\frac{1}{2}}  ds \leq \frac{\Vert v\Vert_F}{t} \cdot t \cdot \frac{1}{2} = \frac{\Vert v\Vert_F}{2}.
\end{align*}

Now we show that $\zeta \colon V^{(1)} \times V^{(2)}  \rightarrow E$, $(x,w)\mapsto \Phi(-t,x,w)$
has open image and is a diffeomorphism onto its image. To this end we consider the $C^r$-map
$b\colon V^{(1)}\times V^{(2)}\times V^{(2)} \rightarrow \tilde{F}\times F, ~ (z,w,v) \mapsto \Phi(t,(z,v),w)$.
We have $\Phi(t,0,0)=0$. With Assertion II we get $d_3\Phi(t,0,0;\sbull).F \subseteq D_{\Phi(t,0,0)} =F$ and with (\ref{L(F)Stern}) we conclude 
\begin{align*}
d_2b_{2}(0,0,0;\sbull)={\pr}_2(d_3\Phi(t,0_E,0_F;\sbull)) = d_3\Phi(t,0_E,0_F;\sbull)\in \mathcal{L}(F)^\ast.
\end{align*}

With Theorem \ref{SatzUeberUKF} we get that after shrinking $V^{(1)}$ and $V^{(2)}$ the map $b_{2}(z,\sbull,v)\colon V^{(2)} \rightarrow F$ has open image and is a diffeomorphism onto its image. Moreover we get that  
$\Psi\colon V^{(1)}\times V^{(2)}\times V^{(2)}  \rightarrow E\times F ~ (z,w,v)\mapsto ( (z,v),b_{2}(z,w,v))$
has open image and is a diffeomorphism onto its image. We have $\Phi(t,0,0)=0$ and so $\Psi(0,0,0)=(0,0)$. We choose $0$-neighbourhoods $W^{(1)} \subseteq V^{(1)} \subseteq \tilde{F}$ and $W^{(2)} \subseteq V^{(2)}\subseteq F$ such that $W^{(1)}\times W^{(2)} \times W^{(2)} \subseteq \im (\Psi)$. Hence $\Psi^{-1}(z,v,0)=(z,b_{2}(z,\sbull,v)^{-1}(0),v)$ for $(z,v)\in W^{(1)} \times W^{(2)}$. We define $W:=W^{(1)}\times W^{(2)}$.

For the map $u\colon W^{(1)}\times W^{(2)} \rightarrow V^{(2)}~ (z,v) \mapsto (\Psi^{-1})_{2}(z,v,0)$ we get $b_{2}(z,u(z,v),v)=0$, because of $(\Psi^{-1})_{2}(z,v,0) = b_{2}(z,\sbull,v)^{-1}(0)$. We define the map $\xi\colon W^{(1)}\times W^{(2)} \rightarrow E~ (z,v) \mapsto (b_{1}(z,u(z,v),v),u(z,v))$.  In the following we show that $\xi|_{\xi^{-1}(V)}$ is inverse to $\zeta|_{\zeta^{-1}(W)}$. To this end we calculate 
\begin{align*}
&\zeta\circ \xi(z,v) = \zeta(b_{1}(z,u(z,v),v),u(z,v)) = \Phi(-t,\underbracket{b_{1}(z,u(z,v),v)}_{=b(z,u(z,v),v)},u(z,v))\\
=& \Phi(-t,\Phi(t,(z,v),u(z,v)),u(z,v)) = (z,v)
\end{align*}
Given $(x,w)\in \zeta^{-1}(W)$ we have
\begin{align}\label{BaDi-x}
b(\zeta_{1}(x,w),w,\zeta_{2}(x,w)) = \Phi(t,\zeta(x,w),w) = x.
\end{align}
And so we get $b_{2}(\zeta_{1}(x,w),w,\zeta_{2}(x,w))=0$ respectively $u(\zeta(x,w))=w$. Hence
\begin{align*}
&\xi\circ \zeta (x,w) =(b_{1}(\zeta_{1}(x,w),u(\zeta(x,w)),\zeta_{2}(x,w)),u(\zeta(x,w)))\\
=&(b_{1}(\zeta_{1}(x,w),w,\zeta_{2}(x,w)),w) = (x,w).
\end{align*}

We define $U_\varphi:= \xi^{-1}(V)$, $V_\varphi:=\zeta^{-1}(W)$ and $\varphi:=\xi|_{U_\varphi}^{V_\varphi}$. In particular we get $\varphi^{-1}=\zeta|_{V_\varphi}$. After shrinking $V_\varphi$ we assume $V_\varphi = V^{(1)}_{\varphi} \times V^{(2)}_{\varphi}$ with $V^{(1)}_{\varphi} \subseteq V^{(1)}$ and $V^{(2)}_{\varphi} \subseteq V^{(2)}$. We show that $\varphi$ is a Frobenius  chart around $0$.
%
%
%
It is sufficient to show $d\varphi(\{p\}\times D_p) = F$ respectively $((d\varphi)(p,\sbull))^{-1}(F)=D_p$ for all $p \in U_\varphi$, because of Remark \ref{FB-charakter}. This is equivalent to show $d\varphi^{-1}(x,w;\sbull)(F)= D_{\varphi^{-1}(x,w)}$ respectively $d_2\zeta(x,w;\sbull)(F)= D_{\zeta(x,w)}$ for all $(x,w) \in V_\varphi = V_\varphi^{(1)}\times V_\varphi^{(2)}$.

Since (\ref{BEHII}) the map $\lambda\colon V_\varphi^{(1)} \times V_\varphi^{(2)} \rightarrow \mathcal{L}(F)~ (x,w) \mapsto \psi_{2}(x,d_3\Phi(t,x,w;\sbull))$ is well-defined and continuous. Because of $\lambda(0_{\tilde{F}} , 0_F)\in\mathcal{L}(F)^\ast$ we assume $\lambda(x,w)\in\mathcal{L}(F)^\ast$ for all $(x,w)\in V_\varphi^{(1)}\times V_\varphi^{(2)}$. Hence $d_3\Phi(t,x,w;\sbull)=\psi_{2}(x,\sbull)^{-1}\circ \lambda(x,w)\in\mathcal{L}(F,D_x)$
is an isomorphism of topological vector spaces for $x\in V_\varphi^{(1)}$.
\end{proof}

\section{Applications of Frobenius theorems in infinite-dimensional Lie-theory}\label{LieTheory}

In \cite[Problem VI.4]{Neeb} Neeb stated the following open problem.
\begin{problem2}
Let $G$ be a regular Lie group. Is every closed Lie subalgebra $\mathfrak{h}\subseteq L(G)$ with finite codimension integrable?  
\end{problem2}
In \cite{Gloeckner} Gl{\"o}ckner proposed that generalisations of the Frobenius theorems can be used to solve this problem. In \cite{Eyni} we used Frobenius theorems for Co-Banach Distributions to solve this problem. Moreover it was possible to solve the problem for a finite-dimensional Lie subalgebra. In this section we want consider the case of a Banach Lie algebra. The main work was done in Section \ref{Frobenius theorem}. We simply use the methods from \cite[Chapter 4]{Eyni}. The only new thing is Theorem \ref{satzBanach} and its proof at the end of this section. For the convenience of reader we present the definitions, results and proofs of \cite[Chapter 4]{Eyni}. We also mention that in \cite[Chapter 4]{Eyni} we used ideas of \cite[VI, \S5. Lie Groups and Subgroups]{Lang}, where Lang discussed the case of Banach Lie groups. Besides the more general approach we corrected in \cite[Chapter 4]{Eyni} an inaccuracy of Lang in \cite[Chapter VI. Lemma 5.3]{Lang} with the help Gl{\"o}ckner.

First we state the main result of this section and will prove it at the end.

\begin{theorem}\label{satzBanach}
Let $G$ be a Lie group modelled over a locally convex space and $\mathfrak{h}\subseteq L(G)$ be a Lie subalgebra that is complemented as a topological subspace and is a Banach space. If $G$ provides an exponential map, then we can find a Lie group $H$ that is a subgroup of $G$ and an immersed submanifold of $G$ such that $L(H)=\mathfrak{h}$.
\end{theorem}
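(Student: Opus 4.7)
The plan is to build a left-invariant subbundle $D \subseteq TG$ from $\mathfrak{h}$, verify the hypotheses of Theorem \ref{BanachBanach} using the exponential map of $G$, extract a maximal integral manifold $H$ through the identity via Theorem \ref{Existenz-MIM}, and then show that $H$ is a subgroup and carries a Lie group structure integrating $\mathfrak{h}$.

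Concretely, I would first define $D_g := T_e L_g(\mathfrak{h})$ for $g\in G$, where $L_g$ denotes left translation. Since $\mathfrak{h}$ is complemented in $L(G)$, translating the complement by $L_g$ gives $D$ the structure of a subbundle of $TG$ with typical fibre $\mathfrak{h}$; the local trivialisations may be obtained from left translations combined with a fixed chart around $e$. The distribution $D$ is involutive precisely because $\mathfrak{h}$ is a Lie subalgebra: the bracket of two left-invariant vector fields taking values in $D$ is again left-invariant with value at $e$ in $\mathfrak{h}$, and any local vector field with image in $D$ can, on small enough opens, be written as a linear combination of left-invariant ones with $C^{r-1}$-coefficients, which reduces involutivity to the algebraic property of $\mathfrak{h}$ (this is where I would model the argument on the corresponding step in \cite[Chapter 4]{Eyni}).

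Next I would produce the required vector field with parameters. Set $X\colon G\times \mathfrak{h} \to TG$, $X(g,v) := T_eL_g(v)$. Linearity in $v$ and $\im(X)\subseteq D$ are immediate, and $X(e,\sbull)|^{D_e}\colon \mathfrak{h}\to D_e=\mathfrak{h}$ is the identity, so conditions (\ref{BanachBanach1})--(\ref{BanachBanach3}) of Theorem \ref{BanachBanach} hold. For condition (\ref{BanachBanach4}), the existence of $\exp\colon L(G)\to G$ is exactly what supplies a smooth flow with parameters: the map $\Phi(t,g,v):=g\cdot \exp(tv)$ is smooth in $(t,g,v)$ as a composition of the smooth exponential, scalar multiplication $\mathfrak{h}\hookrightarrow L(G)$, and multiplication of $G$; one checks $\partial_t \Phi(t,g,v)=T_{\Phi(t,g,v)}L_{\Phi(t,g,v)g^{-1}}\cdot v = X(\Phi(t,g,v),v)$ using the one-parameter subgroup property of $\exp$, and $\Phi(0,g,v)=g$. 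Hence Theorem \ref{BanachBanach} applies and $D$ is a Frobenius distribution, so by Theorem \ref{Existenz-MIM} there is a unique maximal integral manifold $H$ of $D$ containing $e$.

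Finally, I would promote $H$ to a Lie subgroup. Left-invariance of $D$ means that for any $g\in H$ the set $g^{-1}H$ is an integral manifold of $D$ passing through $e$; maximality forces $g^{-1}H\subseteq H$, which shows $H$ is closed under $(g,h)\mapsto g^{-1}h$ and thus a subgroup. Smoothness of multiplication and inversion on $H$ is obtained by the standard universal property of maximal integral manifolds: any smooth map from a manifold into $G$ whose image is contained in $H$ factors as a smooth map into $H$, and applied to the restrictions of the Lie group operations of $G$ this yields smoothness of the operations on $H$. Since $T_eH=D_e=\mathfrak{h}$, we obtain $L(H)=\mathfrak{h}$. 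The portion that I expect to cost the most work is the verification of involutivity for $D$ at the level of arbitrary (not necessarily left-invariant) vector fields with image in $D$, where one must pass from the abstract Lie bracket in $\mathfrak{h}$ to the bracket of arbitrary sections of $D$; this is precisely the point at which the methods recalled from \cite[Chapter 4]{Eyni} are needed, together with the fact that $\mathfrak{h}$ is complemented so that one can split sections of $D$ locally via the chosen decomposition of $L(G)$.
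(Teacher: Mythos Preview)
Your overall strategy coincides with the paper's: define $D_g=T\lambda_g(\mathfrak{h})$, check it is an involutive subbundle (Lemma~\ref{Anwendung1}), feed the left-invariant vector field with parameters $X(g,v)=T\lambda_g(v)$ together with its flow $\Phi(t,g,v)=g\exp(tv)$ into Theorem~\ref{BanachBanach}, and then extract the Lie subgroup $H$ as the maximal integral manifold through~$e$. The paper's proof of Theorem~\ref{satzBanach} itself only records the verification of conditions (\ref{BanachBanach1})--(\ref{BanachBanach4}), delegating everything else to the preceding lemmas.

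There is one step where your sketch diverges from the paper and is not obviously safe. You write that smoothness of multiplication and inversion on $H$ follows from ``the standard universal property of maximal integral manifolds: any smooth map from a manifold into $G$ whose image is contained in $H$ factors as a smooth map into $H$.'' In the locally convex setting this initiality property is \emph{not} established anywhere in the paper; Theorem~\ref{Existenz-MIM} only shows that other \emph{integral manifolds} include smoothly into the maximal one, not that arbitrary smooth maps with image in $H$ factor smoothly. The paper therefore does not invoke such a universal property. Instead, Lemma~\ref{Anwendung6} argues directly with Frobenius charts: one shows that each $\lambda_h\colon H\to H$ is smooth by trapping $\lambda_h$ of a local slice $S_g$ inside $S_{hg}$, then proves $(g,h)\mapsto g^{-1}h$ is smooth on a small symmetric neighbourhood $(S_1\cap V)_1$ of $e$ by the same slice-containment trick, and finally appeals to the local description of Lie groups to propagate the smooth structure. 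You should either replace your universal-property appeal by this Frobenius-chart argument, or supply an independent proof that maximal integral manifolds of Frobenius distributions on locally convex manifolds are initial.
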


\begin{definition}\label{links}
Let $G$ be a Lie group, $U\subseteq G$ be open and $X\colon U\rightarrow TG$ be a vector field of $G$. Given $g_0\in U$ we define the left invariant vector field $X^{l}_{g_{0}}:U \rightarrow TU\subseteq TG,~ g \mapsto T\lambda_{g}(w)$ with $w:=T\lambda_{g_0^{-1}}(X(g_0))$. Obviously $X_{g_0}^l(g_0)=X(g_0)$.
\end{definition}

\begin{remark}
If in the situation of Definition \ref{links} $\mathfrak{h}\subseteq L(G)$ is a closed Lie subalgebra and $X$ is a vector field with its image lying in $D:=\bigcup_{g\in G}T\lambda_g(\mathfrak{h})$, then also the image of $X_{g_0}^l$ lies in $D$.
\end{remark}

\begin{lemma}\label{leftinv}
If $G$ is a Lie group, $\mathfrak{h} \subseteq L(G)$ is a closed Lie subalgebra, $U\subseteq G$ is open and $X_1, X_2 \colon U \rightarrow TG$ are left invariant vector fields with images in $D:=\bigcup_{g\in G}T\lambda_{g}(\mathfrak{h})$, then also the image of $[X_1,X_2]$ lies in $D$.
\end{lemma}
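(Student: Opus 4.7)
The plan is to reduce the claim to the standard identification of the bracket of left invariant vector fields with the Lie bracket in $L(G)$. Unraveling Definition \ref{links}, a left invariant vector field on $U$ is by definition a map of the form $g \mapsto T\lambda_g(w)$ for some fixed $w \in L(G) = T_eG$. Hence I may write $X_i(g) = T\lambda_g(w_i)$ for $i=1,2$ with constants $w_i \in L(G)$. Since $T\lambda_g \colon L(G) \to T_gG$ is a linear homeomorphism mapping $\mathfrak{h}$ bijectively onto $D_g$, the hypothesis $\im(X_i) \subseteq D$ forces $w_i \in \mathfrak{h}$.

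Next, I would extend each $X_i$ to a globally defined smooth left invariant vector field $\tilde{X}_i \colon G \to TG$, $g \mapsto T\lambda_g(w_i)$, which obviously satisfies $\tilde{X}_i|_U = X_i$. It is standard in the theory of Lie groups modelled over locally convex spaces (see \cite{Neeb} or \cite{Gloeckner u. Neeb}) that the space of left invariant vector fields on $G$ is closed under the Lie bracket, and that this bracket corresponds to the bracket in $L(G)$ via
\begin{align*}
[\tilde{X}_1,\tilde{X}_2](g) = T\lambda_g\bigl([w_1,w_2]_{L(G)}\bigr) \qquad (g \in G).
\end{align*}

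Because the Lie bracket of vector fields is a local operation, $[X_1,X_2] = [\tilde{X}_1,\tilde{X}_2]|_U$. Since $\mathfrak{h}$ is a Lie subalgebra of $L(G)$, we have $[w_1,w_2]_{L(G)} \in \mathfrak{h}$, and hence
\begin{align*}
[X_1,X_2](g) = T\lambda_g\bigl([w_1,w_2]_{L(G)}\bigr) \in T\lambda_g(\mathfrak{h}) = D_g \subseteq D
\end{align*}
for every $g \in U$, which is the claim. The only delicate point is ensuring that the cited formula for the bracket of left invariant vector fields remains valid for Lie groups modelled over locally convex spaces in Keller's $C^k_c$-theory, but this is well documented in the literature quoted above, so there is no real obstacle to the argument.
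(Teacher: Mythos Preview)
Your proof is correct and follows essentially the same approach as the paper's: both reduce the claim to the fact that the bracket of two left invariant vector fields is again left invariant and corresponds, via $T\lambda_g$, to the Lie algebra bracket $[w_1,w_2]_{L(G)}\in\mathfrak{h}$. Your version is in fact more careful than the paper's in that you explicitly extend $X_i$ to global left invariant vector fields before evaluating at the identity (the paper simply writes $X_i(1)$ without worrying whether $1\in U$), but the underlying argument is identical.
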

\begin{proof}
The vector field $[X_1,X_2]$ is left invariant. Hence
\begin{align*}
[X_{1},X_{2}](g) =T\lambda_{g}([X_{1},X_{2}](1)) = T\lambda_{g}(\underbracket{[X_{1}(1),X_{2}(1)]}_{\in\mathfrak{h}})\in D.
\end{align*}
for $g \in U$.
\end{proof}

The following Lemma \ref{Anwendung1} was discussed in \cite[Chapter VI. Lemma 5.3]{Lang} for Banach Lie groups. But Lang's proof for the involutivity of the considered vector distribution is not quite correct. In general it is not possible to reduce the problem to the local case by the use of local charts, in the way Lang did it, because in general there exists no local trivialisation of the tangent bundle that is simultaneously induced by a chart of the Lie group and a sub bundle trivialisation of the distribution. As mentioned above our proof Lemma \ref{Anwendung1} was stated in \cite[Chapter 4]{Eyni}. The inaccuracy was noticed by the author and with the help of Gl{\"o}ckner  it was possible to find an other proof.
\begin{lemma}\label{Anwendung1}
Given a Lie group $G$ and a closed Lie subalgebra $\frak{h}\subseteq L(G)$, the vector distribution $D:=\bigcup_{g\in G}T\lambda_{g}(\frak{h})$ is an involutiv subbundle of $TG$ with typical fibre $\frak{h}$, if we identify the modelling space of $G$ with $L(G)$. 
\end{lemma}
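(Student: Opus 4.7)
The proof splits into showing that $D$ is a subbundle with typical fibre $\mathfrak{h}$ and that it is involutiv. For the subbundle structure, I would use the global left trivialisation $\Psi \colon G \times L(G) \to TG$, $(g,v) \mapsto T\lambda_g(v)$, which is a vector bundle isomorphism and restricts to a bijection $G \times \mathfrak{h} \to D$. Composing $\Psi|_{G\times\mathfrak{h}}$ with any chart of $G$ provides subbundle charts of $D$ with typical fibre $\mathfrak{h}$; closedness of $\mathfrak{h}$ in $L(G)$ gives the compatibility needed for $D$ to be a genuine subbundle.

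For involutivity, fix $g_0 \in U$ and vector fields $X_1, X_2 \colon U \to TG$ with image in $D$. The strategy is to decompose $X_i = X_i^l + Z_i$, where $X_i^l := (X_i)^l_{g_0}$ is the left-invariant extension of $X_i(g_0)$ from Definition \ref{links} and $Z_i := X_i - X_i^l$. By the Remark after Definition \ref{links}, $X_i^l$ has image in $D$, hence so does $Z_i$, and $Z_i(g_0)=0$ by construction. Bilinearity of the bracket yields
\begin{align*}
[X_1, X_2] = [X_1^l, X_2^l] + [X_1^l, Z_2] + [Z_1, X_2^l] + [Z_1, Z_2],
\end{align*}
and Lemma \ref{leftinv} handles the first term at $g_0$. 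It therefore suffices to verify the auxiliary claim: whenever $Z \colon U \to TG$ has image in $D$ and $Z(g_0)=0$, then $[W,Z](g_0) \in D_{g_0}$ for every vector field $W$ on $U$.

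For the auxiliary claim I would pass to a chart $\varphi$ of $G$ around $g_0$; the local Lie bracket formula collapses to $[W,Z](g_0) = d\tilde Z(\varphi(g_0))\cdot \tilde W(\varphi(g_0))$ because the principal part $\tilde Z$ vanishes at $\varphi(g_0)$. Representing $Z$ via the left trivialisation as $Z(g) = T\lambda_g(Z^{\sharp}(g))$ with $Z^{\sharp} \colon U \to \mathfrak{h}$ (well-defined because $Z$ takes values in $D$ and $\mathfrak{h}$ is closed), and noting that $Z^{\sharp}(g_0)=0$, the Leibniz rule applied to $g \mapsto T\lambda_g(Z^{\sharp}(g))$ at $g_0$ splits $dZ(g_0)\cdot v$ into the derivative of the identically zero vector field $g \mapsto T\lambda_g(Z^{\sharp}(g_0)) = T\lambda_g(0)$ and a term of the form $T\lambda_{g_0}(dZ^{\sharp}(g_0)\cdot v)$. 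The latter lies in $T\lambda_{g_0}(\mathfrak{h}) = D_{g_0}$ because $dZ^{\sharp}(g_0)\cdot v \in \mathfrak{h}$ by closedness. This proves the claim and, together with Lemma \ref{leftinv}, gives $[X_1,X_2](g_0)\in D_{g_0}$.

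The main obstacle is the interplay between two different trivialisations of $TG$: the chart-induced one, in which one has a local Lie bracket formula, and the global left trivialisation, in which $D$ is the constant subbundle $G \times \mathfrak{h}$. These two trivialisations are not simultaneously induced by a single Lie group chart, so one cannot simply ``pull back'' the problem to flat coordinates as is attempted in \cite[VI, Lemma 5.3]{Lang}; the Leibniz computation sketched above, together with the crucial fact that the obstructing summand is the derivative of a constant (which vanishes identically), is the step that rescues the argument in the present locally convex setting.
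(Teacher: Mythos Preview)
Your argument is correct and the subbundle part coincides with the paper's (the paper uses the inverse left trivialisation $\psi\colon TG\to G\times L(G)$, $v\mapsto(\pi(v),\pi(v)^{-1}.v)$, which is the same device). For involutivity, however, you take a genuinely different route. The paper does \emph{not} decompose $X_i$; instead it writes down the local bracket formula for $X_{1,\phi},X_{2,\phi}$ in a chart and isolates an ``obstruction'' term
\[
(\ast)=d_1A\big(x,X_{2,\psi}(x);X_{1,\phi}(x)\big)-d_1A\big(x,X_{1,\psi}(x);X_{2,\phi}(x)\big),
\]
where $A(x,\sbull)$ is the transition between the chart trivialisation and the left trivialisation. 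The key observation there is that $(\ast)$ depends only on the \emph{values} $X_{i,\psi}(x)$, so replacing $X_i$ by the left-invariant fields $X_{i,g_0}^l$ with the same value at $g_0$ produces the very same term $(\ast)$; since the bracket of left-invariant fields lies in $D$ by Lemma~\ref{leftinv}, one reads off $(\ast)\in A(x).\mathfrak{h}$ by comparison.

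Your decomposition $X_i=X_i^l+Z_i$ with $Z_i(g_0)=0$ together with the auxiliary claim is a cleaner packaging of the same idea: the comparison argument in the paper is, after unwinding, exactly the statement that the cross terms $[X_i^l,Z_j]$ and $[Z_1,Z_2]$ lie in $D_{g_0}$. What your approach buys is that one never has to write down the transition map $A$ explicitly; the Leibniz splitting of $g\mapsto T\lambda_g(Z^\sharp(g))$ with $Z^\sharp(g_0)=0$ makes the vanishing of the dangerous summand transparent (it is the derivative of the zero section). What the paper's approach buys is that it makes visible precisely \emph{which} part of the local bracket formula fails to stay in the fibre and why Lang's shortcut in \cite[VI, Lemma~5.3]{Lang} goes wrong, namely the $d_1A$ contribution coming from the nontrivial change of trivialisation. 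One small notational wrinkle in your sketch: ``$dZ(g_0)\cdot v$'' is only meaningful after choosing a trivialisation, so in a full write-up you should keep the computation consistently in the chart (i.e.\ differentiate $\tilde Z(x)=B(x,Z^\sharp(\varphi^{-1}(x)))$ with $B(x,\sbull)$ linear, whence $d_1B(x_0,0;\sbull)=0$), but this is cosmetic rather than a gap.
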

\begin{proof}
First we show that $D$ is a subbundle of $TM$. We define $D_{g}:=T\lambda_{g}(\frak{h})$ for $g \in G$ and $\psi \colon TG \rightarrow G\times L(G)$, $v \mapsto (\pi (v), \pi(v)^{-1}.v)$. Given $h \in \frak{h}$ and $g\in G$ we get $\pi(T\lambda_g(h))=g$ and $\pi(T\lambda_g(h))^{-1}.T\lambda_g(h) = g^{-1}.T\lambda_g(h)=h$. Hence $\psi (D)=G\times \frak{h}$ and $\psi(T\lambda_g(h)) =(g,h)$.
It remains to show that $D$ is involutiv. To this end let $X_{1},X_{2}\colon G\rightarrow TG$ be vector fields of $G$ with $\Image(X_{1}),\Image(X_{2})\subseteq D$ and $g_{0}\in G$. 
We find a chart $\phi\colon U\rightarrow V \subseteq L(G)$ of $G$ around $g_{0}$ such that $d\phi(D_{g_{0}})=\frak{h}$. The map
$\theta\colon TU\rightarrow U\times E,~ \theta=(\phi^{-1}\times {\id}_{E})\circ T\phi$
is a local trivialisation of $TG$ around $g_{0}$. By abuse of notation we write $\psi$ for the restriction of $\psi$ to $TU$. We define the map $A\colon V\times E\rightarrow E,~ (x,y)\mapsto g_{\psi,\theta}(\phi^{-1}(x),y)$ with $g_{\psi,\theta}:={\pr}_{2}\circ \theta\circ\psi^{-1}$. And write $A(x):=A(x,\sbull)$ for $x\in V$. The diagramme 
\begin{align*}
\begin{xy}
\xymatrix{
TU\ar[r]^-{\psi}\ar[rd]_-\theta & U\times E\ar[d]^-{(g,y)\mapsto(g,A(\phi(g),y))}\\
&U\times E
}
\end{xy}
\end{align*}
commutes. Let $X_{1,\phi}$ and $X_{2,\phi}$ be the local representatives of the vector fields $X_{1}$ and $X_{2}$ in the chart $\phi$. We define $X_{1,\psi}:={\pr}_{2}\circ\psi\circ X_{1}\circ \phi^{-1}$ and $X_{2,\psi}:={\pr}_{2}\circ\psi\circ X_{2}\circ \phi^{-1}$. Let $x\in V$. For $i=1,2$ we get 
\begin{align}
&A(x).(X_{i,\psi}(x))=A(x).(\psi_{2}\circ X_{i}\circ\phi^{-1}(x))  =A(x).(\psi_{2}(X_{i}(\phi^{-1}(x))))\nonumber\\ 
=&d\phi(X_{i}\circ\phi^{-1}(x)) =X_{i,\phi}(x)\label{Lie-A(x)}.
\end{align}
If we show
\begin{align}\label{FERtiG}
[X_{1,\phi},X_{2,\phi}](x)\in A(x).\frak{h},
\end{align}
we get 
\begin{align*}
&T\phi^{-1}({\id},[X_{1,\phi},X_{2,\phi}])(x)\in T\phi^{-1}(x,A(x).\frak{h}) \subseteq \psi^{-1}(U\times \frak{h})=D\cap TU\\
\Rightarrow&  [X_{1},X_{2}](p)\in D ~~:\forall p\in U.
\end{align*}
Hence it remains to show (\ref{FERtiG}). Equation (\ref{Lie-A(x)}) leads to
\begin{align*}
&dX_{2,\phi}(x,X_{1,\phi}(x))=d(x\mapsto A(x,X_{2,\psi}(x)))(x,X_{1,\phi}(x))\\
=&dA\big((x,X_{2,\psi}(x));(X_{1,\phi}(x),dX_{2,\psi}(x,X_{1,\phi}(x)))\big)\\
=&d_{1}A\big(x,X_{2,\psi}(x);X_{1,\phi}(x)\big)+d_{2}A\big(x,X_{2,\psi}(x);dX_{2,\psi}(x,X_{1,\phi}(x))\big)\\
=&d_{1}A\big(x,X_{2,\psi}(x);X_{1,\phi}(x)\big)+A(x).dX_{2,\psi}(x,X_{1,\phi}(x)).
\end{align*}
With an analogous calculation for $dX_{2,\phi}(x,X_{1,\phi}(x))$ we get
\begin{align}
&[X_{1,\phi},X_{2,\phi}](x)=A(x).(\underbracket{dX_{2,\psi}(x,X_{1,\phi}(x))-dX_{1,\psi}(x,X_{2,\phi}(x))}_{\in \frak{h}})\nonumber\\
+&\underbracket{d_{1}A\big(x,X_{2,\psi}(x) ; X_{1,\phi}(x)\big) - d_{1}A\big(x,X_{1,\psi}(x);X_{2,\phi}(x)\big)}_{=:(\ast)}\label{Anwendung4}
\end{align}
It remains to show $(\ast)\in A(x).\frak{h}$. Writing $h:=\phi^{-1}(x)$, we define the left invariant vector fields $X_{1,h}^{l}$ and $X_{2,h}^{l}$ like in Definition \ref{links}. The images of $X_{1,h}^{l}$ and $X_{2,h}^{l}$ lie in $D$. Hence $[X_{1,h}^{l},X_{2,h}^{l}](x)\in D$, because of Lemma \ref{leftinv} and therefore $[X^{l}_{1,h,\phi},X^{l}_{2,h,\phi}](x)\in A(x).\frak{h}$, where $X^{l}_{1,h,\phi}$ and $X^{l}_{2,h,\phi}$ are the local representatives of the vector fields $X^{l}_{1,h}$ and $X^{l}_{2,h}$. We can repeat the calculation that led us to equation (\ref{Anwendung4}), also for $X_{1,h}^{l}$ and $X_{2,h}^{l}$ and get 
\begin{align*}
&\underbracket{[X^{l}_{1,h,\phi},X^{l}_{2,h,\phi}](x)}_{\in A(x).\frak{h}}=A(x).(\underbracket{dX^{l}_{2,h,\psi}(x,X^{l}_{1,h,\phi}(x))-dX^{l}_{1,h,\psi}(x,X^{l}_{2,h,\phi}(x))}_{\in \frak{h}})\\
+&d_{1}A\big(x,X^{l}_{2,h,\psi}(x);X^{l}_{1,h,\phi}(x)\big)-d_{1}A\big(x,X^{l}_{1,h,\psi}(x);X^{l}_{2,h,\phi}(x)\big).
\end{align*}
Thus
\begin{align*}
d_{1}A\big(x,X^{l}_{2,h,\psi}(x);X^{l}_{1,h,\phi}(x)\big)-d_{1}A\big(x,X^{l}_{1,h,\psi}(x);X^{l}_{2,h,\phi}(x)\big)\in A(x).\frak{h}
\end{align*}
and because $X^{l}_{i,h,\psi}(x)=X_{i,\psi}(x)$ for $i=1,2$, we get $(\ast)\in A(x).\frak{h}$. Hence (\ref{FERtiG}) is shown and we are done.
\end{proof}


As mentioned above our proof the following Lemma \ref{Anwendung6} was stated in \cite[Chapter 4]{Eyni} and there we followed the ideas of \cite[Chapter VI. Theorem 5.4]{Lang}, but Lang did not discuss whether the constructed group $H$ is actually a Lie group.
\begin{lemma}\label{Anwendung6}
If the vector bundle $D$ in Lemma \ref{Anwendung1} is a Frobenius distribution, then we find a Lie group $H$ that is an integral manifold for $D$ and a  subgroup of $G$.
\end{lemma}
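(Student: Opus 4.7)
The plan is to take $H$ to be the maximal integral manifold of $D$ through the identity, verify the subgroup property via maximality of integral manifolds and left-invariance of $D$, establish that $\mathrm{Ad}(g)\mathfrak{h} \subseteq \mathfrak{h}$ for every $g \in H$ using Lemma \ref{Eindeutigkeit1}, and finally produce the $C^r$-Lie group structure through Frobenius charts.

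By Theorem \ref{Existenz-MIM} applied to the Frobenius distribution $D$, there is a unique connected maximal integral manifold $H \subseteq G$ through $e$; it is an immersed submanifold modelled on $\mathfrak{h}$ and satisfies $T_g H = D_g$ for all $g \in H$. The identity $T\lambda_g(D_x) = T\lambda_g T\lambda_x(\mathfrak{h}) = T\lambda_{gx}(\mathfrak{h}) = D_{gx}$ makes each $\lambda_g$ a $D$-preserving diffeomorphism of $G$ and hence it sends maximal integral manifolds to maximal integral manifolds. For $g \in H$ the maximal integral manifold $\lambda_{g^{-1}}(H)$ through $g^{-1}$ therefore contains $e$, so by uniqueness $\lambda_{g^{-1}}(H) \subseteq H$ and in particular $g^{-1} \in H$; applying the same reasoning with $g^{-1}$ in place of $g$ yields $gH \subseteq H$. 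Thus $H$ is closed under products and inversion, and so is a subgroup of $G$.

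The critical step is the $\mathrm{Ad}$-invariance. Given $g \in H$, the construction of $H$ in Theorem \ref{Existenz-MIM} provides a piecewise $C^r$-curve $c \colon [0,1] \to G$ with $c(0) = e$, $c(1) = g$ and $c'(t) \in D_{c(t)}$. Setting $\xi(t) := T\lambda_{c(t)^{-1}}(c'(t)) \in \mathfrak{h}$, the identity $\frac{d}{dt}\mathrm{Ad}(c(t)) = \mathrm{Ad}(c(t)) \circ \mathrm{ad}(\xi(t))$ shows that the curve $A(t) := \mathrm{Ad}(c(t))|_\mathfrak{h} \colon \mathfrak{h} \to L(G)$ satisfies
\begin{align*}
A'(t) = A(t) \circ \mathrm{ad}(\xi(t))|_\mathfrak{h}, \qquad A(0) = \iota_\mathfrak{h}^{L(G)},
\end{align*}
in $\mathcal{L}(\mathfrak{h}, L(G))$, where $\mathrm{ad}(\xi(t))|_\mathfrak{h}$ is a continuous endomorphism of $\mathfrak{h}$ because $\mathfrak{h}$ is a Lie subalgebra. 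Since $\mathfrak{h}$ is Banach, standard Banach-space ODE theory produces a solution $\tilde A(t) \in \mathcal{L}(\mathfrak{h})$ of the same equation with initial value $\mathrm{id}_\mathfrak{h}$, and $\iota_\mathfrak{h}^{L(G)} \circ \tilde A(t)$ then also solves the problem in $\mathcal{L}(\mathfrak{h}, L(G))$. Uniqueness from Lemma \ref{Eindeutigkeit1} forces $A(t) = \iota_\mathfrak{h}^{L(G)} \circ \tilde A(t)$, so $\mathrm{Ad}(c(t))(\mathfrak{h}) \subseteq \mathfrak{h}$ for all $t$; at $t = 1$ this is $\mathrm{Ad}(g)(\mathfrak{h}) \subseteq \mathfrak{h}$.

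With $\mathrm{Ad}$-invariance, a direct calculation yields $T\mu_G(T\lambda_{g_1}(h_1), T\lambda_{g_2}(h_2)) = T\lambda_{g_1 g_2}(\mathrm{Ad}(g_2^{-1})(h_1) + h_2) \in D_{g_1 g_2}$ and analogously $T\iota_G(D_g) \subseteq D_{g^{-1}}$ for $g, g_1, g_2 \in H$. Fix $(g_1, g_2) \in H \times H$ and a Frobenius chart $\varphi$ of $G$ at $g_1 g_2$; by Lemma \ref{IUMF} the slice $S$ through $g_1 g_2$ is open in $H$. Any smooth curve in a connected neighbourhood of $(g_1, g_2)$ in $H \times H$ is mapped by $\mu_G$ to a $D$-path in $G$ starting at $g_1 g_2$, and inside $\varphi$ such paths preserve the transverse coordinate and therefore stay in $S$. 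This shows $\mu_G \colon H \times H \to G$ is continuous into $H$ at $(g_1, g_2)$, and combined with its smoothness into $G$ and the slice structure on $H$ coming from Frobenius charts yields the $C^r$-smoothness of $\mu_H \colon H \times H \to H$; the analogous argument handles $\iota_H$, so $H$ becomes a Lie group with $L(H) = D_e = \mathfrak{h}$. The main obstacle is the $\mathrm{Ad}$-invariance step, where the Banach hypothesis on $\mathfrak{h}$ is indispensable for reducing the ODE in $\mathcal{L}(\mathfrak{h}, L(G))$ to the Banach ODE in $\mathcal{L}(\mathfrak{h})$ via Lemma \ref{Eindeutigkeit1}.
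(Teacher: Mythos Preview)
Your argument introduces an assumption that is not part of the lemma's hypotheses: you explicitly invoke ``Since $\mathfrak{h}$ is Banach'' in the $\mathrm{Ad}$-invariance step and call this hypothesis ``indispensable''. But Lemma~\ref{Anwendung6} only assumes the setting of Lemma~\ref{Anwendung1}, where $\mathfrak{h}$ is an arbitrary closed Lie subalgebra, together with the assumption that $D$ is a Frobenius distribution. The lemma is stated at this level of generality on purpose---it is recycled from \cite{Eyni} and meant to apply equally to the co-Banach and finite-dimensional situations discussed there, not only to the Banach case of Theorem~\ref{satzBanach}. So the $\mathrm{Ad}$-invariance detour through Lemma~\ref{Eindeutigkeit1} is not available, and your proof as written does not establish the lemma.

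The paper's proof avoids $\mathrm{Ad}$-invariance entirely. The key observation is that left translations alone preserve $D$ (this is immediate from $D_g=T\lambda_g(\mathfrak{h})$), and hence left translates of integral manifolds are again integral manifolds. Combined with Lemma~\ref{Vektordistributionen2}, this already forces images into slices: for a Frobenius chart around $1$ with slice $S_1$ and a symmetric $1$-neighbourhood $V$ with $VV\subseteq U_1$, each $g^{-1}\cdot (S_1\cap V)_1$ with $g\in(S_1\cap V)_1$ is a connected integral manifold through $1$ contained in $U_1$, hence lies in $S_1$. This shows $(g,h)\mapsto g^{-1}h$ maps $(S_1\cap V)_1\times(S_1\cap V)_1$ into the submanifold $S_1$, so it is $C^r$ into $S_1$ and therefore into $H$; the local description of Lie groups then finishes. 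No differential computation for $\mu_G$ on $D\times D$---and hence no $\mathrm{Ad}$-invariance---is needed. Your curve-based argument for landing in the slice can be replaced by this direct use of Lemma~\ref{Vektordistributionen2}, which is both simpler and removes the dependence on the Banach hypothesis.
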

\begin{proof}
Because $D$ is a Frobenius distribution, we find a maximal integral manifold $H$ for $D$ that includes $1$. First we show that $H$ is a subgroup of $G$. To this end let $h\in H$. We show $h^{-1}\in H$. The set $h^{-1}\cdot H$ is an integral manifold for $D$, because for $g\in H$ we have
\begin{align}
&T_{h^{-1}g}(h^{-1}H)=T_{h^{-1}g}(\lambda_{h^{-1}}(H))=T\lambda_{h^{-1}}(T_{g}H)\nonumber\\
=&T\lambda_{h^{-1}}(D_{g})=T\lambda_{h^{-1}g}\mathfrak{h}=D_{h^{-1}g}.\label{Anwendung5}
\end{align}
Moreover $1\in h^{-1}H$. By the maximality of $H$ we get $h^{-1}H \subseteq H$. We have $h^{-1}\cdot 1=h^{-1}\in H$, because $1\in H$. 

Now we show $HH\subseteq H$. Let $h\in H$. A similar calculation as in (\ref{Anwendung5}) shows that $hH$ is an integral manifold for $D$ and contains $1$, because $h^{-1}\in H$. Hence $h\cdot H=H$. And thus $HH=H$. We conclude that $H$ is a subgroup of $G$.

To show that $H$ is a Lie group, we have to show that the multiplication and inversion on $H$ are smooth. This follows not directly from the fact that $H$ is a subgroup of the Lie group $G$, because $H$ is no submanifold, but an immersed submanifold of $G$. For $g\in G$ we choose a Frobenius chart $\phi_{g}\colon U_{g}\rightarrow W_{g,1}\times W_{g,2}$ around $g$ with $\phi_{g}(g)=0$. The submanifold 
$S_{g}=\{\phi_{g}^{-1}(x,0):x\in W_{g,1}\}$
is an integral manifold of $D$ and every integral manifold that is contained in $U_{g}$ and intersects $S_{g}$ is a subset of $S_{g}$. First we show that the left multiplication $\lambda_{h}\colon H\rightarrow H$ is smooth for all $h\in H$. Let $g\in G$. 
We write $((h\cdot S_{g})\cap U_{hg})_{hg}$ for the component of $hg$ in $(h\cdot S_{g})\cap U_{hg}$. Obviously $((h\cdot S_{g})\cap U_{hg})_{hg}$ is an integral manifold for $D$ and contained in $U_{hg}$.
Hence $\lambda_{h}((S_{g}\cap \lambda_{h}^{-1}(U_{hg}))_{g})\subseteq S_{hg}$. Because $\lambda_{h}|_{(S_{g}\cap \lambda_{h}^{-1}(U_{hg}))_g}$ is smooth as a map to $G$ and its image is contained in the submanifold $S_{hg}$ and $S_{hg}$ is open in $H$, we get that $\lambda_{h}|_{(S_{g}\cap \lambda_{h}^{-1}(U_{hg}))_g}$ is smooth as a map to $H$. Thus $\lambda_{h}\colon H\rightarrow H$ is a smooth map. Hence $\lambda_{h}\colon H\rightarrow H$ is a diffeomorphism. 

Let $V\subseteq U_{1}$ be an open, symmetric $1$-neighbourhood with $VV\subseteq U_{1}$. 
Given $g\in (S_{1}\cap V)_1$ the set $g^{-1}((S_{1}\cap V)_1)$ is an integral manifold for $D$ that contains $1$ and is a subset of $U_{1}$. Thus $g^{-1}((S_{1}\cap V)_1) \subseteq S_{1}$. The map 
$\theta\colon (S_{1}\cap V)_1\times (S_{1}\cap V)_1 \rightarrow G,~ (g,h) \mapsto g^{-1}h$
is smooth. Since $\Image(\theta)\subseteq S_{1}$ and $S_{1}$ is a submanifold of $G$, we get that $\theta|^{S_{1}}$ is smooth. We write $i_{H}$ for the inversion on $H$ and $m_{H}$ for the  multiplication on $H$. The sets $S_{1}\cap V$ and $(S_{1}\cap V)_1$ are open in $H$ and $i_{H}|_{(S_{1}\cap V)_1}=\theta(\sbull,1)\colon (S_{1}\cap V)_1 \rightarrow S_{1},~ g\mapsto g^{-1}$ is smooth, because $S_{1}$ is open in $H$. Therefore $m_{H}\colon(S_{1}\cap V)_1 \times (S_{1}\cap V)_1 \rightarrow S_{1},~ (g,h)\mapsto g\cdot h=\theta(g^{-1},h)=\theta(i_{H}(g),h)$ is smooth. 

We define $\tilde{V}:=(S_{1}\cap V)_1$ and show that $<\tilde{V}>$ is open and closed in $H$ and so $<(S_{1}\cap V)_1>=H$, because $H$ is connected. Given $g\in <\tilde{V}>$ we have $g\in g\cdot \tilde{V}\subseteq <\tilde{V}>$. And $g\cdot \tilde{V}$ is open in $H$, because the left multiplication is a homeomorphism of $H$ and $\tilde{V}$ is open in $H$. For $g\notin \tilde{V}$ we get $(g\cdot \tilde{V})\cap <\tilde{V}>=\emptyset$. Hence $H_{1}\setminus <\tilde{V}>$ is open. 

With the theorem over the local description of Lie groups we find a unique manifold structure on $H$ such that $H$ is a Lie group and $(S_{1}\cap V)_1$ is an open submanifold in $H$. We write $H_{2}$ for the set $H$ equipped with this Lie group structure and $H_{1}$ for the set $H$ equipped with the original manifold structure that made $H$ to an integral manifold of $D$. Now we choose a chart $\psi\colon U_{\psi}\rightarrow V_{\psi}$ of $(S_{1}\cap V)_1$. Given $g\in H$ the map $\psi_{g}\colon gU_{\psi} \rightarrow V_{\psi},~ h\mapsto \phi(\lambda_{g^{-1}}(h))$ is a chart of $H_{2}$ around $g$. Because the left multiplication on $H_{1}$ is a diffeomorphism, $\psi_{g}$ is a chart of $H_{1}$ around $g$. Hence $H_{1}$ and $H_{2}$ have a common atlas and must therefore coincide.
%
\end{proof}

Now we can proof Theorem \ref{satzBanach}:

\begin{proof}
Again we define $D:=\bigcup_{g\in G} T\lambda_{g}(\mathfrak{h})$. The vector field with parameters $X \colon G \times \frak{h}\rightarrow TG,~ (g,v) \mapsto T\lambda_g(v)$ obviously satisfies the conditions (\ref{BanachBanach1})--(\ref{BanachBanach3}) of Theorem \ref{BanachBanach}. Also condition (\ref{BanachBanach4}) is satisfied, because $\Phi \colon \mathbb{R}\times G \times \frak{h} \rightarrow G,~ (t,g,v) \mapsto \lambda_g(\exp_G(tv))$ is a local flow with parameters of $X$ which follows from
\begin{align*}
\frac{d}{dt}\Big|_{t=0} \Phi(t,g,v)= T\lambda_g(v)= X(g,v).
\end{align*}
\end{proof}

\appendix
\section{Flows of  vector fields on infinite-dimensional manifolds}
Except of the Lemmas \ref{W} and \ref{EinD}, that come from \cite{Gloeckner u. Neeb}, the definitions, theorems and proofs are straightforward generalisations of the well-known case of flows of Banach manifolds as it can be found in \cite[Chapter 1]{Lang} as all ready mentioned in the introduction. We recall this constructions for the convenience of the reader.

\begin{convention}
Throughout this section $E$ will be a locally convex space, $r \in \mathbb{N}$ and $M$ a $C^r$-manifold modelled over $E$.
\end{convention}

\begin{definition}
A $C^1$-curve $\gamma \colon I \rightarrow M$ is called \emph{integral-curve} of a vector field $X \colon M \rightarrow TM$, if $\gamma'(t) = X (\gamma(t))$ for $t\in I$.
\end{definition}

\begin{definition} \label{FL8}
If $\Omega \subseteq \mathbb{R} \times M$ is open and $\Phi\colon\Omega \rightarrow M$, $(t,x) \mapsto \Phi(t,x)=:\Phi_{t}(x)$ is a $C^{r}$-map such that 
\begin{compactenum}[(a)]
\item $\{0\} \times M \subseteq \Omega$,
\item $\Phi_{0}(x)=x$  for all  $x\in M$ and
\item given $x\in M$ we can find an $x$-neighbourhood $U\subseteq M$ and an open symmetric Interval $I$ such that $I \times U \subseteq \Omega$, $I \times \Phi( I\times U) \subseteq \Omega$ and $\Phi_{s}(\Phi_{t}(y))=\Phi_{s+t}(y)$  for $y \in U$ and $s,t \in I$ with $s+t \in I$,\label{3}
\end{compactenum}
then we call $\Phi$ a \emph{local $C^r$-$\mathbb{R}$-action} on $M$. 

\end{definition}

\begin{lemma}\label{UundI}
Given a local $C^r$-$\mathbb{R}$-action $\Phi \colon \Omega \rightarrow M$ and $x\in M$, we can find a pair $(I,U)$ of an open symmetric interval $I\subseteq \mathbb{R}$ and an open $x$-neighbourhood $U \subseteq M$ like in \ref{FL8}.\ref{3} such that $\Phi_t|_U \colon U \rightarrow \Phi_t(U)$ is a diffeomorphism between open subsets of $M$ with the inverse function $\Phi_{-t}|_{\Phi_t(U)} \colon  \Phi_t(U) \rightarrow U$ for all $t\in I$.
\end{lemma}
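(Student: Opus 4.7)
My plan is to take the pair $(I',U')$ provided by Definition \ref{FL8}.\ref{3} and show that we may simply set $I:=I'$ and $U:=U'$; no further shrinking is needed, because symmetry of $I'$ around $0$ means the cocycle identity can always be invoked with the pair $(-t,t)$, whose sum $0$ lies in $I'$ automatically.

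First I would fix $t\in I'$ and $y\in U'$ and apply the cocycle property with $s:=-t$: since $-t\in I'$ and $s+t=0\in I'$, one obtains $\Phi_{-t}(\Phi_t(y))=\Phi_0(y)=y$. The left-hand side is well-defined thanks to the hypothesis $I'\times\Phi(I'\times U')\subseteq\Omega$. This shows that $\Phi_{-t}$ restricts to a left inverse of $\Phi_t|_{U'}$, so $\Phi_t|_{U'}$ is in particular injective.

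Next I would show that $\Phi_t(U')$ is open, which is the step that needs the most care. Let $V_t:=\{z\in M:(-t,z)\in\Omega\}$; this is an open subset of $M$ because $\Omega$ is open, and $\Phi(-t,\sbull)\colon V_t\to M$ is of class $C^r$. By the above inclusion one has $\Phi_t(U')\subseteq V_t$, and every $y\in U'$ with $z:=\Phi_t(y)$ satisfies $\Phi_{-t}(z)\in U'$, so $\Phi_t(U')\subseteq\Phi_{-t}^{-1}(U')\cap V_t$. Conversely, if $z\in V_t$ and $\Phi_{-t}(z)\in U'$, then because $(t,\Phi_{-t}(z))\in I'\times U'\subseteq\Omega$, the cocycle property (now with the pair $(t,-t)$) yields $\Phi_t(\Phi_{-t}(z))=\Phi_0(z)=z$, so $z\in\Phi_t(U')$. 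Thus $\Phi_t(U')=\Phi_{-t}^{-1}(U')\cap V_t$, an open subset of $M$.

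Finally, combining the two inclusions shows that $\Phi_{-t}|_{\Phi_t(U')}$ is a genuine two-sided inverse of $\Phi_t|_{U'}$; both maps are restrictions of the $C^r$-map $\Phi$, so $\Phi_t|_{U'}\colon U'\to\Phi_t(U')$ is a $C^r$-diffeomorphism with inverse $\Phi_{-t}|_{\Phi_t(U')}$. The hardest part is arranging that both compositions $\Phi_{-t}\circ\Phi_t$ and $\Phi_t\circ\Phi_{-t}$ are actually defined and equal to the identity on the right sets; this is precisely where the stringent condition $I'\times\Phi(I'\times U')\subseteq\Omega$ from Definition \ref{FL8}.\ref{3} (combined with symmetry of $I'$) does all the work.
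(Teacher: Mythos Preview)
Your argument has a genuine gap in the backward inclusion used to show that $\Phi_t(U')$ is open. You assert that if $z\in V_t$ and $\Phi_{-t}(z)\in U'$, then the cocycle identity with the pair $(t,-t)$ gives $\Phi_t(\Phi_{-t}(z))=\Phi_0(z)=z$. But the cocycle identity in Definition~\ref{FL8}.\ref{3} reads $\Phi_s(\Phi_t(y))=\Phi_{s+t}(y)$ \emph{for $y\in U'$}; it is a statement about base points lying in the chosen neighbourhood $U'$. In your backward step the base point is $z$, and you only know $z\in V_t$ --- nothing forces $z\in U'$. You cannot instead take $y:=\Phi_{-t}(z)\in U'$ as base point, because the cocycle then tells you about $\Phi_s(\Phi_{t'}(y))$, not about $\Phi_t(y)$ versus $z$; the relation $\Phi_{-t}(z)=y$ does not by itself yield $\Phi_t(y)=z$.

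This is exactly why the paper does shrink. It first passes to $I\subseteq I'$ and $U\subseteq U'$ with $\Phi(I\times U)\subseteq U'$, and then proves $\Phi_t(U)=\{z\in U':\Phi_{-t}(z)\in U\}$. In this version the candidate point $z$ lies in $U'$ by construction, so the cocycle identity at $z$ is legitimately available for the backward inclusion. Your claim that ``no further shrinking is needed'' is thus not supported: the shrinking is precisely what makes the openness argument go through, and your attempt to bypass it invokes the cocycle where Definition~\ref{FL8}.\ref{3} does not provide it.
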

\begin{proof}
Let $I'\subseteq \mathbb{R}$ be an open symmetric interval and $U' \subseteq M$ be an open $x$-neighbourhood such that Definition \ref{FL8}.\ref{3} holds. It is clear that $\Phi_t|_{U'}$ is injective for all $t\in I'$. 

Let $I\subseteq I'$ be an open symmetric interval and $U \subseteq U'$ be an open $x$-neighbourhood such that $\Phi ( I \times U ) \subseteq U'$. We have $y = \Phi_{-t} \circ \Phi_t(y)$ and $\Phi_t(y) \in U'$ for all $y \in U$ and $t \in I$ and get $\Phi_t(y) = (\Phi_{-t}|_{U'})^{-1}(y)$. We see $\Phi_t(U) = (\Phi_t|_{U'})^{-1}(U)$ and conclude that $\Phi_t(U)$ is open in $U'$ respectively $M$.

It is clear that $\Phi_{-t}|_{\Phi_t(U)} \colon \Phi_t(U) \rightarrow U$ is a $C^r$-inverse function to $\Phi_t|_U$.
\end{proof}

\begin{definition}
A local $C^r$-$\mathbb{R}$-action $\Phi \colon \Omega \rightarrow M$ is called \emph{local flow} of a $C^r$- vector field $X \colon M \rightarrow TM$ if $\frac{\partial}{\partial t}\big|_{t=0} \Phi(t,x) = X(x)$ for all $x\in M$. In this situation we also say that the vector field $X$ {\it provides a local flow}.
\end{definition}

\begin{lemma}
For a $C^{r-1}$-vector field $X \colon M \rightarrow TM$ and a local $C^r$-action $\Phi \colon \Omega \rightarrow M$ the following conditions are equivalent:
\begin{compactenum}[(a)]
\item $\Phi \colon \Omega \rightarrow M$ is a local flow for $X$.\label{Eins}
\item Given $x\in M$ we can choose an open symmetric interval $I\subseteq \mathbb{R}$ and an open $x$-neighbourhood $U\subseteq M$ like Definition \ref{FL8}.\ref{3} and get $\frac{\partial}{\partial s}\big|_{s=t} \Phi(s,y) = X (\Phi(t,y))$ for all $t \in I$ and $y \in U$.\label{Zwei}
\end{compactenum}
\end{lemma}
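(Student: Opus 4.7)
The plan is to prove the two implications separately, using the local $\mathbb{R}$-action property to reduce differentiation at arbitrary time $t$ to differentiation at $s=0$.

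First I would dispense with the easy direction (b) $\Rightarrow$ (a). Given (b), simply take $t=0$; since $\Phi_{0}(y)=y$ for all $y\in M$ by the definition of a local $\mathbb{R}$-action, the formula in (b) reduces to $\frac{\partial}{\partial s}\big|_{s=0}\Phi(s,y)=X(y)$ for $y$ in some neighbourhood of the given point, and since the point was arbitrary, this holds on all of $M$, which is the definition of a local flow.

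For the main direction (a) $\Rightarrow$ (b), fix $x\in M$. Using Definition \ref{FL8}.(c), pick an open symmetric interval $I\subseteq \mathbb{R}$ and an open $x$-neighbourhood $U\subseteq M$ such that $I\times U\subseteq\Omega$, $I\times \Phi(I\times U)\subseteq \Omega$, and $\Phi_{s}(\Phi_{t}(y))=\Phi_{s+t}(y)$ whenever $y\in U$ and $s,t,s+t\in I$. For $y\in U$ and $t\in I$ fixed, the equality $\Phi(s+t,y)=\Phi(s,\Phi(t,y))$ holds for all $s$ in some open $0$-neighbourhood $J\subseteq I$ with $s+t\in I$. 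Differentiating both sides with respect to $s$ at $s=0$ and using the chain rule on the left gives
\begin{align*}
\frac{\partial}{\partial s}\Big|_{s=t}\Phi(s,y)=\frac{\partial}{\partial s}\Big|_{s=0}\Phi(s+t,y)=\frac{\partial}{\partial s}\Big|_{s=0}\Phi\bigl(s,\Phi(t,y)\bigr).
\end{align*}
By (a) applied at the point $\Phi(t,y)\in M$, the right-hand side equals $X(\Phi(t,y))$, which is exactly assertion (b).

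The only potential obstacle is ensuring that all the expressions involved are well-defined at the relevant points; but this is taken care of by the axioms in Definition \ref{FL8} (in particular $\{0\}\times M\subseteq\Omega$ and the openness of $\Omega$, which guarantee that $s\mapsto \Phi(s,\Phi(t,y))$ is defined for $s$ near $0$) and by the choice of $(I,U)$ supplied by Definition \ref{FL8}.(c). No further calculation is required.
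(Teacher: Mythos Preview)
Your proof is correct and follows essentially the same approach as the paper: for (a)$\Rightarrow$(b) you exploit the semigroup identity $\Phi(s+t,y)=\Phi(s,\Phi(t,y))$ and differentiate at $s=0$, and for (b)$\Rightarrow$(a) you specialize to $t=0$. The paper's argument is identical in substance, only slightly more terse.
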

\begin{proof}
(\ref{Eins})$\Rightarrow$(\ref{Zwei}): We calculate
\begin{align*}
\frac{\partial}{\partial s}\Big|_{s=t} \Phi(s,y) = \frac{\partial}{\partial s}\Big|_{s=0} \Phi(s+t , y) = \frac{\partial}{\partial s}\Big|_{s=0} \Phi_s (\Phi_t(y)) = X(\Phi_t(y)).
\end{align*}

(\ref{Zwei})$\Rightarrow$(\ref{Eins}): We have $\frac{\partial}{\partial t}\big|_{t=0} \Phi(t,x) = X(\Phi_0(x)) = X(x)$.
\end{proof}

The above lemma leads directly to the following theorem.

\begin{theorem}
If $X \colon M \rightarrow TM$ is a $C^{r-1}$-vector field that provides a local flow and $x \in M$, then we can find an integral-curve $\gamma\colon I \rightarrow M$ of $X$ on an open symmetric interval $I$ with $\gamma(0) = x$. 
\end{theorem}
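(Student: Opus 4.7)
The plan is to read this off directly from the preceding lemma and the definition of a local flow. Since $X$ provides a local flow, there exists by hypothesis a local $C^r$-$\mathbb{R}$-action $\Phi\colon\Omega\rightarrow M$ with $\frac{\partial}{\partial t}\big|_{t=0}\Phi(t,y)=X(y)$ for every $y\in M$. I would fix $x\in M$ and invoke clause (c) of Definition \ref{FL8} to obtain an open symmetric interval $I\subseteq\mathbb{R}$ and an open $x$-neighbourhood $U\subseteq M$ with $I\times U\subseteq \Omega$ satisfying the cocycle-type compatibility, which is exactly the hypothesis needed to apply the equivalence from the previous lemma.

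Next I would define the curve $\gamma\colon I\rightarrow M$ by $\gamma(t):=\Phi(t,x)$. Since $\Phi$ is of class $C^r$ (and $r\geq 1$), $\gamma$ is $C^r$ and in particular $C^1$. The initial condition $\gamma(0)=\Phi_0(x)=x$ follows from clause (b) of Definition \ref{FL8}. Finally, applying the implication (a)$\Rightarrow$(b) of the preceding lemma to the point $x\in U$ and any $t\in I$, one obtains
\begin{align*}
\gamma'(t)=\frac{\partial}{\partial s}\Big|_{s=t}\Phi(s,x)=X(\Phi(t,x))=X(\gamma(t)),
\end{align*}
so $\gamma$ is an integral curve of $X$ through $x$ defined on the open symmetric interval $I$.

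There is no real obstacle here: once the preceding lemma is in place, the construction is entirely a matter of unpacking the definition of a local flow and restricting $\Phi$ to the slice $\{(t,x):t\in I\}$. The only subtlety worth spelling out in a full proof is the verification that the neighbourhood $U$ can indeed be chosen to contain $x$, but this is built into clause (c) of Definition \ref{FL8}.
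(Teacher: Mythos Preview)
Your proof is correct and is exactly the argument the paper has in mind: the paper does not even write out a proof, merely stating that ``The above lemma leads directly to the following theorem,'' and your proposal simply unpacks that remark by setting $\gamma(t)=\Phi(t,x)$ and invoking condition (b) of the preceding lemma. There is nothing to add or compare.
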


The following Lemmas \ref{W} and \ref{EinD} as well as their proofs come from \cite{Gloeckner u. Neeb}. We only recall them for the convenience of the reader. They wear also cited in \cite[Chapter 1]{Eyni}.
\begin{lemma}\label{W}
Let $\Phi\colon\Omega\rightarrow M$ be a local $C^r$-$\mathbb{R}$-action, $x\in M$, $I$ be an open symmetric interval and  $U \subseteq M$ be an $x$-neighbourhood like in Definition \ref{FL8}.\ref{3}. Because $\Phi$ is continuous we find an open symmetric interval $I'\subseteq I$ and an open $x$-neighbourhood $W\subseteq U$ such that $\Phi(I' \times  W) \subseteq U$. In this situation we have $W\subseteq \Phi_{t}(U)$ for $t\in I'$.
\end{lemma}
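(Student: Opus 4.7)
The plan is to exploit the symmetry of $I'$ together with the local cocycle property in Definition \ref{FL8}.\ref{3} to exhibit an explicit preimage of every point of $W$ under $\Phi_t$. Concretely, I would fix $t\in I'$ and $y\in W$ and propose $z:=\Phi_{-t}(y)$ as the witness showing $y\in\Phi_t(U)$.

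First I would check that $z$ actually lies in $U$. Because $I'$ is symmetric, $-t\in I'$, and because $y\in W$ we have $(-t,y)\in I'\times W$, so the assumption $\Phi(I'\times W)\subseteq U$ gives $z=\Phi(-t,y)\in U$. This is the only place the shrinking from continuity of $\Phi$ is used.

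Next I would apply the local cocycle identity from Definition \ref{FL8}.\ref{3} with the pair $(t,-t)$. This is legal: $y\in W\subseteq U$, the parameters $t$ and $-t$ both lie in $I'\subseteq I$, and their sum $0$ lies in the symmetric interval $I$. Thus
\begin{equation*}
\Phi_t(z)=\Phi_t\bigl(\Phi_{-t}(y)\bigr)=\Phi_{t+(-t)}(y)=\Phi_0(y)=y,
\end{equation*}
where the last equality uses condition (b) of Definition \ref{FL8}. Hence $y=\Phi_t(z)\in\Phi_t(U)$, which is precisely the claim $W\subseteq\Phi_t(U)$.

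There is no real obstacle here; the result is pure bookkeeping and amounts to observing that $\Phi_{-t}|_W$ is a right inverse to $\Phi_t$ into $U$. The only subtlety is to apply the cocycle identity in the correct range, which is exactly why the pair $(I,U)$ from Definition \ref{FL8}.\ref{3} (rather than arbitrary intervals and neighbourhoods) has to be used.
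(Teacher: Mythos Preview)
Your proof is correct and is essentially identical to the paper's: both take a point of $W$, move it back by $\Phi_{-t}$ into $U$ using $\Phi(I'\times W)\subseteq U$, and then apply the cocycle identity $\Phi_t\circ\Phi_{-t}=\Phi_0=\id$ on $U$. You have simply been more explicit about why the hypotheses of Definition~\ref{FL8}.\ref{3} are met when invoking that identity.
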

\begin{proof}
Given $t\in I'$ and $z\in W$ we have $z\in U$ and so $\Phi_{t}(\Phi_{-t}(z))=\Phi_{0}(z)=z$. With $\Phi_{-t}(z)\in U$ we get $z\in\Phi_{t}(U)$.
\end{proof}

\begin{lemma}\label{EinD}
If $X\colon M\rightarrow E$ is a $C^{r-1}$-vector field that provides a local flow, $L\subseteq \mathbb{R}$ is an interval, $\varphi, \psi \colon L\rightarrow M$ are integral-curves for $X$ and $t_{0}\in L$ with $\varphi(t_{0})=\psi(t_{0})$, then $\varphi=\psi$. 
\end{lemma}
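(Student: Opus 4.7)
The plan is a standard connectedness argument: I will show that $J:=\{t\in L : \varphi(t)=\psi(t)\}$ is nonempty, closed, and open in the interval $L$, whence $J=L$. Nonemptiness is given ($t_0\in J$) and closedness is immediate from the continuity of $\varphi$ and $\psi$ together with the fact that $M$ is Hausdorff. The real work is openness, and for that I will use the local flow around any coincidence point to force the two integral curves to coincide with a single flow curve on a small time-interval.

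In detail, fix $t_1\in J$ and set $x:=\varphi(t_1)=\psi(t_1)$. Choose an open symmetric interval $I\subseteq\mathbb{R}$ and an open $x$-neighborhood $U$ as in Definition \ref{FL8}(c), so that $\Phi_s\circ\Phi_t=\Phi_{s+t}$ on $U$ for $s,t,s+t\in I$. By Lemma \ref{UundI}, each $\Phi_s|_U\colon U\to\Phi_s(U)$ is a diffeomorphism with inverse $\Phi_{-s}|_{\Phi_s(U)}$, and by Lemma \ref{W} I can shrink to a smaller symmetric interval $I'\subseteq I$ and a smaller $x$-neighborhood $W\subseteq U$ with $\Phi(I'\times W)\subseteq U$ and $W\subseteq\Phi_s(U)$ for all $s\in I'$. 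By continuity of $\varphi$ and $\psi$, shrinking $I'$ further, I may assume $\varphi(t_1+s),\psi(t_1+s)\in W$ for $s\in I'$.

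Set $\alpha(s):=\varphi(t_1+s)$; it is an integral curve of $X$ with $\alpha(0)=x\in U$. I claim $\alpha(s)=\Phi(s,x)$ for $s\in I'$, and analogously for $\psi$, which then yields $\varphi=\psi$ on $t_1+I'$ and gives openness of $J$. To prove the claim, consider the auxiliary curve
\begin{equation*}
\Psi(s):=\Phi(-s,\alpha(s)),\qquad s\in I',
\end{equation*}
which is well-defined because $\alpha(s)\in W\subseteq U$ and $-s\in I$. The key computation is
\begin{equation*}
\Psi'(s)=-\partial_{1}\Phi(-s,\alpha(s))+d_{2}\Phi(-s,\alpha(s);X(\alpha(s)))=-X(\Psi(s))+d_{2}\Phi(-s,\alpha(s);X(\alpha(s))),
\end{equation*}
using $\partial_{1}\Phi(t,y)=X(\Phi(t,y))$ for $y\in U$, which itself follows from the semigroup property by writing $\Phi(t+h,y)=\Phi(h,\Phi(t,y))$ and differentiating at $h=0$. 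To finish I need the flow-invariance of $X$, namely
\begin{equation*}
d_{2}\Phi(t,y;X(y))=X(\Phi(t,y))\qquad(y\in U,\ t\in I),
\end{equation*}
which I obtain by differentiating the identity $\Phi(t,\Phi(s,y))=\Phi(t+s,y)$ in $s$ at $s=0$ (both sides are $C^{1}$ in $s$, and their derivatives give precisely the two sides of the displayed identity). Inserting this into the expression for $\Psi'$ yields $\Psi'\equiv 0$, hence $\Psi(s)=\Psi(0)=x$, i.e.\ $\Phi(-s,\alpha(s))=x$.

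The last step is to read off $\alpha(s)=\Phi(s,x)$ from $\Phi(-s,\alpha(s))=x$. Since $\alpha(s)\in W\subseteq\Phi_{s}(U)$ and $\Phi_{-s}|_{\Phi_{s}(U)}$ is the inverse diffeomorphism of $\Phi_{s}|_{U}$ by Lemma \ref{UundI}, the equation $\Phi_{-s}(\alpha(s))=x=\Phi_{-s}(\Phi_{s}(x))$ forces $\alpha(s)=\Phi_{s}(x)$. Exactly the same argument with $\psi$ in place of $\varphi$ gives $\psi(t_1+s)=\Phi(s,x)$, so $\varphi$ and $\psi$ agree on $t_1+I'$, and $J$ is open. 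The subtle step, and the one that distinguishes this from the Banach case where local Lipschitz continuity would suffice, is the flow-invariance of $X$ in the absence of any ODE uniqueness theorem in locally convex spaces; it must be extracted purely from the assumed semigroup property of the local flow, which is why the assumption ``$X$ provides a local flow'' is essential here.
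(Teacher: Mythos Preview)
Your proof is correct and follows essentially the same approach as the paper: both arguments show that the coincidence set is open by considering the auxiliary curve $s\mapsto\Phi(-s,\varphi(t_{1}+s))$, proving it is constant via a flow-invariance identity, and then concluding via injectivity of $\Phi_{-s}$ on an appropriate domain. The only cosmetic difference is in how the identity $d_{2}\Phi(t,y;X(y))=X(\Phi(t,y))$ is obtained: the paper differentiates $\Phi(-t,\Phi(t,x))=x$ in $t$ and then writes an arbitrary $w\in W$ as $\Phi_{t}(x)$ using $W\subseteq\Phi_{t}(P)$, whereas you obtain it more directly by differentiating the semigroup relation $\Phi(t,\Phi(s,y))=\Phi(t+s,y)$ in $s$ at $s=0$; this slightly streamlines the argument and obviates the second shrinking step $(J',W)\subseteq(J,P)$ that the paper performs.
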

\begin{proof}
Let $\Phi \colon \Omega \rightarrow M$ be a local flow for $X$. Obviously the curves $\varphi$ and $\psi$ are automatically $C^{r}$-curves. We define $T:=\{t\in L: \varphi(t)=\psi(t)\}\subseteq L$. The set $T$ is closed in $L$, because $\varphi$ and $\psi$ are continuous and the diagonal in $M\times M$ is closed. Moreover $T\neq \emptyset$, because $t_{0}\in T$. We get $T=J$, if we can show that $T$ is open, because $J$ is connected. Let $t_{1}\in T$. We have to construct a $\delta>0$ such that $]t_{1}-\delta,t_{1}+\delta[\cap J\subseteq T$.

Let $x_{1}:=\varphi(t_{1})=\psi(t_{1})$. We find an open symmetric interval $I \subseteq \mathbb{R}$ and an open $x_{1}$-neighbourhood $U\subseteq M$ like in Definition \ref{FL8}.\ref{3}. Next we choose  an open symmetric interval $J\subseteq I$, an open $x_1$-neighbourhood $P \subseteq U$ with $\Phi (J \times P ) \subseteq U$, an open symmetric interval $J' \subseteq J$ and an open symmetric $x_1$-neighbourhood $W \subseteq P$ such that $\Phi(J' \times W) \subseteq P$. Now we get $W\subseteq \Phi_t(P)$ for $t \in J'$ with Lemma \ref{W}.

Given $x\in P$ we define $\alpha_x \colon J \rightarrow M$, $\alpha_x(t) = \Phi_{t}(x)$.
For $t\in J$ we get
\begin{align*}
x=\Phi_{-t}(\Phi_{t}(x))=\Phi(-t,\Phi(t,x)).
\end{align*}
Derivation in $t$ on both sides leads to
\begin{align}
&0=\frac{d}{dt}\Phi(-t,\Phi(t,x)) = T\Phi(-\sbull,\Phi(\sbull,x))(t,1)\nonumber\\
=&T\Phi\circ(T(-{\id}_{\mathbb{R}}),T\Phi(\sbull,x))(t,1) =T\Phi((-t,1),\alpha_x'(t))\nonumber\\
=&T\Phi(\sbull,\alpha_x(t))(-t,-1)+T\Phi(-t,\sbull)(\alpha_x'(t))\nonumber\\
=&(-1) \cdot \frac{\partial}{\partial s}\Big|_{s=-t} \Phi_s(\alpha_x(t)) + T(\Phi_{-t})(\alpha_x'(t))\nonumber\\
=&-X(\Phi_{-t}(\alpha_x(t))) + T(\Phi_{-t})(\alpha_x'(t))\label{FL5}
\end{align}
where we deduced $\alpha_x(t)\in V$ from $x\in P$ and $t\in J$ in order to show (\ref{FL5}). Altogether we have shown equation (\ref{FL5}) for $x\in P$ and $t\in J$.

Now let $w\in W$ and $t \in J'$. We find $x\in P$ with $\Phi_{t}(x)=w$.  Like above we define $\alpha_x(s):=\Phi_{s}(x)$ for $s \in J$ and get $0 = -X( \Phi_{-s} (\alpha_x(s))) + T(\Phi_{-s}) ( X( \alpha_x(s) ) )$ for $s \in J$. Hence 
\begin{align}\label{FL7}
0=-X(\Phi_{-t}(w)) + T(\Phi_{-t}) (X(w)),
\end{align}
because $t \in J' \subseteq J$ and $\alpha_x(t)=w$. (\ref{FL7}) holds for $w\in W$ and $t \in J'$.

Because $\varphi$ is continuous, we find $\delta \in J' \cap ]0,\infty[$ such that $\varphi\left(]t_{1}-\delta,t_{1}+\delta[\cap L\right)\subseteq W$. We define the $C^{1}$-map $h\colon]-\delta,\delta[ \rightarrow M$, $h(t) = \Phi_{-t}(\varphi(t+t_{1}))$. Given $t\in]-\delta,\delta[$ we conclude
\begin{align*}
&h'(t)=T\Phi(-\sbull,\varphi(\sbull+t_{1}))(t,1) 
=T\Phi\circ(T(-{\id}_{\mathbb{R}}),T\varphi(\sbull+t_{1}))(t,1)\\
=&T\Phi((-t,-1),\varphi'(t+t_{1}))\\
=&T\Phi(\sbull,\varphi(t+t_{1}))(-t,-1)+T\Phi(-t,\sbull)(\varphi'(t+t_{1}))\\
=&-X(\Phi_{-t}(\varphi(t+t_{1})))+T\Phi(-t,\sbull)(X(\varphi(t+t_{1}))) =0
\end{align*}
where we used $\varphi(t+t_{1})\in W$ and (\ref{FL7}). Thus $h$ is constant, because intervals are connected. We have $h(0)=\varphi(t_{1})$ and so $\Phi_{-t}(\varphi(t+t_{1}))=\varphi(t_{1})$ for all $t\in ]-\delta,\delta[$. After shrinking $\delta$ we assume $\psi(]t_{1}-\delta,t_{1}+\delta[\cap L)\subseteq W$. A similar calculation as above shows
\begin{align*}
\Phi_{-t}(\varphi(t+t_{1}))=\varphi(t_{1})=\psi(t_{1})=\Phi_{-t}(\psi(t+t_{1}))
\end{align*}
for $t\in]-\delta,\delta[$. The map $\Phi_{t}|_{P}$ is injective and $\varphi(t),\psi(t)\in W\subseteq P$. Thus $\varphi|_{]t_{1}-\delta,t_{1}+\delta[}=\psi|_{]t_{1}-\delta,t_{1}+\delta[}$. 
\end{proof}

\begin{definition}
Given a $C^{r-1}$-vector field $X\colon M\rightarrow TM$ that provides a local flow and $x \in M$ we define $I_x := \bigcup_\gamma I_\gamma$, where the union is taken over all integral-curves $\gamma \colon I_\gamma \rightarrow M$ of $X$ with $0\in I_\gamma$ and $\gamma(0) = x$. In this situation
\begin{align*}
\gamma_x \colon I_x \rightarrow M,~ t  \mapsto \gamma(t) \text{ for } t\in I_\gamma
\end{align*}
is a well-defined map and an integral-curve for $X$. We call $\gamma_x$ the \emph{maximal integral-curve} of $X$ that maps $0$ to $x$.
\end{definition}
\begin{proof}
If $t \in I_{\gamma_1} \cap I_{\gamma_2} =: I$, then $\gamma_1(t) = \gamma_2(t)$, because $0,t \in I$ and $\gamma_i|_I \colon I \rightarrow M$ are integral-curves of $X$ with $\gamma_1(0)=\gamma_2(0)$. We get $\gamma_1|_I = \gamma_2|_I$. 
\end{proof}

\begin{lemma}\label{FlussLemma}
Let $X \colon M \rightarrow TM$ be a $C^{r-1}$-vector field, $x \in M$ and $t \in I_x$. We write $y:= \gamma_x(t)$ and get
$I_x-t = I_{y}$ and $\gamma_x(\tau + t) = \gamma_{y}(\tau)$  for $\tau \in I_{y}$.
\end{lemma}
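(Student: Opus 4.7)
The plan is to show the two required identities simultaneously by exhibiting two translated curves and invoking maximality against Lemma~\ref{EinD} in each direction. Throughout we use that $X$ provides a local flow, so the uniqueness statement of Lemma~\ref{EinD} is available.

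First I would define $\alpha\colon I_x-t\rightarrow M$ by $\alpha(\tau):=\gamma_x(\tau+t)$. This is well-defined, since $\tau\in I_x-t$ means $\tau+t\in I_x$. It is clearly a $C^1$-curve with $\alpha(0)=\gamma_x(t)=y$, and by the chain rule $\alpha'(\tau)=\gamma_x'(\tau+t)=X(\gamma_x(\tau+t))=X(\alpha(\tau))$, so $\alpha$ is an integral curve of $X$ starting at $y$. By the definition of $\gamma_y$ as the union over all such integral curves (and the uniqueness from Lemma~\ref{EinD} guaranteeing that the union makes sense as a well-defined map), we obtain $I_x-t\subseteq I_y$ and $\gamma_x(\tau+t)=\alpha(\tau)=\gamma_y(\tau)$ for all $\tau\in I_x-t$. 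This already yields the curve identity $\gamma_x(\tau+t)=\gamma_y(\tau)$ on the expected domain.

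Second, for the reverse inclusion $I_y\subseteq I_x-t$, I would go in the other direction: define $\beta\colon I_y+t\rightarrow M$ by $\beta(s):=\gamma_y(s-t)$. This is a $C^1$-curve with $\beta(t)=\gamma_y(0)=y=\gamma_x(t)$ and $\beta'(s)=X(\gamma_y(s-t))=X(\beta(s))$, so $\beta$ is an integral curve for $X$. However, $\beta$ does not start at $0$ with value $x$, so to apply the definition of $\gamma_x$ directly we need a translated version. The cleanest way is to consider the concatenation: on the interval $J:=(I_y+t)\cup I_x$ (which is an interval since both contain $t$ in the first case and $0$ in the second, and both are open intervals sharing at least the piece $(I_x-t)\cap I_y + t$ provided by the first step) define $\delta(s)$ to equal $\gamma_x(s)$ on $I_x$ and $\beta(s)$ on $I_y+t$. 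The first step of the proof shows that on the overlap these agree, so $\delta$ is a well-defined $C^1$-integral curve with $\delta(0)=x$. Maximality of $\gamma_x$ then forces $J\subseteq I_x$, whence $I_y+t\subseteq I_x$, i.e.\ $I_y\subseteq I_x-t$.

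Combining both inclusions gives $I_x-t=I_y$, and the identity $\gamma_x(\tau+t)=\gamma_y(\tau)$ on $I_y=I_x-t$ was already established in the first step. I do not expect any serious obstacle here; the only subtle point is making the concatenation argument in the second step fully rigorous, in particular checking that $J$ is indeed an interval and that the two pieces of $\delta$ match on the overlap, which is exactly what the first step provides via $\gamma_x(\tau+t)=\gamma_y(\tau)$ for $\tau\in I_x-t$.
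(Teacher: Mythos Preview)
Your proof is correct, and the first step is essentially identical to the paper's. In the second step, however, you take an unnecessarily roundabout route. Your remark that ``$\beta$ does not start at $0$ with value $x$'' is actually false: since $0\in I_x$ we have $-t\in I_x-t\subseteq I_y$ (by your first step), so $0\in I_y+t$, and then the curve identity from the first step gives $\beta(0)=\gamma_y(-t)=\alpha(-t)=\gamma_x(0)=x$. This is exactly what the paper does: it observes $\varphi_2(0)=x$ directly and applies maximality of $\gamma_x$ to conclude $I_y+t\subseteq I_x$ in one line. Your concatenation argument is valid---the union $J$ is indeed an interval since both pieces contain $t$, and the overlap identity follows from step one---but it is extra machinery bypassing a one-line observation you already had in hand.
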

\begin{proof}
Defining $\varphi_1 \colon I_x -t \rightarrow M$, $t \mapsto \gamma_x(\tau + t)$, we have $\varphi_1'(t)= \gamma_x'(\tau + t) = X (\gamma_x(\tau + t)) = X(\varphi_1(\tau))$. From $t \in I_x$ we deduce $0 \in I_x - t$ and $\varphi_1(0) = y$. Hence
\begin{align}
&I_x-t \subseteq I_{y} \text{ and} \label{ofFl1}\\
&\gamma_{y}(\tau) = \varphi_1(\tau) \text{ for } \tau \in I_x-t \label{ofFl2}.
\end{align}
We define $\varphi_2 \colon I_y +t \rightarrow M$, $\tau \mapsto \gamma_y(\tau-t)$. With (\ref{ofFl1}) and $0 \in I_x$ we get $-t \in I_x - t \subseteq I_y$. Therefore $0 \in I_y +t$. With (\ref{ofFl2}) we calculate $\varphi_2(0)=\gamma_y(-t) = \varphi_1(-t) = \gamma_x(0) =x$. Obviously $\varphi_2'(\tau) = X(\varphi_2(\tau))$ for $\tau \in I_y +t$. Thus $I_y + t \subseteq I_x$ and so $I_x-t = I_y$, because of (\ref{ofFl1}). Using (\ref{ofFl1}) again we get $\gamma_y(\tau) = \gamma_x(\tau +t)$.
\end{proof}

\begin{definition}\label{GlobFluss}
Let $X\colon M\rightarrow TM$ be a $C^{r-1}$-vector field  that provides a local flow, $x\in M$ and $\gamma_x \colon I_x \rightarrow M$ be the maximal integral-curve of $X$ that maps $0$ to $x$. We define $\Omega^X := \bigcup_{x\in M} I_x \times \{x\} \subseteq \mathbb{R} \times M$ and call the map $\Phi^X \colon \Omega^X \rightarrow M$, $(x,t) \mapsto \gamma_x(t)$ the \emph{global flow} of the vector field $X$. 
Given $t \in \mathbb{R}$ we also define the set $\Omega^X_t := \{x \in M: (t,x) \in \Omega\} \subseteq M$.
\end{definition}

In the following proof of Theorem \ref{FlussCr} we follow the ideas of the proof of \cite[Theorem 1.16]{Lang} that is formulated for Banach manifolds but works also for manifolds that are modelled over locally convex spaces.
\begin{theorem}\label{FlussCr}
In the situation of Definition \ref{GlobFluss} $\Omega^X$ is open in $\mathbb{R} \times M$ and $\Phi^X$ is a $C^r$-map. Moreover
\begin{align}\label{FlussAequi}
(s, \Phi^X(t,x)) \in \Omega^X \Leftrightarrow (t+s,x) \in \Omega^X,
\end{align}
for $(t,x) \in \Omega^X$ and $s\in \mathbb{R}$ and in this situation
\begin{align}
\Phi^X (s , \Phi^X(t,x)) = \Phi^X (t+s , x).\label{FlussGleich}
\end{align}
\end{theorem}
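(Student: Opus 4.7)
The plan is to first deduce the flow identities (\ref{FlussAequi}) and (\ref{FlussGleich}) directly from Lemma \ref{FlussLemma}, and then to establish openness of $\Omega^X$ together with $C^r$-smoothness of $\Phi^X$ near an arbitrary $(t_0,x_0)\in\Omega^X$ by a compactness argument along the integral curve $\gamma_{x_0}$, expressing $\Phi^X$ locally as a finite composition of the given local flows.

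For the flow identities, assume $(t,x)\in\Omega^X$, set $y:=\Phi^X(t,x)=\gamma_x(t)$ and apply Lemma \ref{FlussLemma} to obtain $I_y=I_x-t$. Hence $s\in I_y$ is equivalent to $s+t\in I_x$, which is exactly (\ref{FlussAequi}); the same lemma yields $\gamma_y(s)=\gamma_x(s+t)$, i.e.\ (\ref{FlussGleich}).

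For the openness and smoothness, fix $(t_0,x_0)\in\Omega^X$ and assume $t_0\geq 0$ (the case $t_0\leq 0$ is symmetric by replacing $X$ with $-X$). The image $K:=\gamma_{x_0}([0,t_0])$ is compact in $M$. Since $X$ provides a local flow, every point $p\in K$ admits a local flow $\Psi^p\colon J^p\times U^p\to M$ on an open symmetric interval $J^p$ and an open $p$-neighbourhood $U^p$. Using compactness of $K$ together with continuity of $\gamma_{x_0}$, I choose a partition $0=\tau_0<\tau_1<\dots<\tau_n=t_0$ and local flows $\Psi^{p_1},\dots,\Psi^{p_n}$ with $\gamma_{x_0}([\tau_{i-1},\tau_i])\subseteq U^{p_i}$ and $\sigma_i:=\tau_i-\tau_{i-1}\in J^{p_i}$. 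Setting $f_0(y):=y$ and $f_i(y):=\Psi^{p_i}(\sigma_i,f_{i-1}(y))$ for $i=1,\dots,n$, continuity of the $\Psi^{p_i}$'s yields an open $x_0$-neighbourhood $V$ on which every $f_i$ is well-defined and $f_{i-1}(V)\subseteq U^{p_i}$. Then $F(t,y):=\Psi^{p_n}(t-\tau_{n-1},f_{n-1}(y))$ is a composition of $C^r$-maps and hence defines a $C^r$-map on a suitable open neighbourhood $W$ of $(t_0,x_0)$ in $\mathbb{R}\times M$.

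The crux is the identification $\Phi^X|_W=F$, which also yields $W\subseteq\Omega^X$. For each fixed $y\in V$ the curve $t\mapsto F(t,y)$ should assemble, piece by piece on the subintervals $[\tau_{i-1},\tau_i]$, into an integral curve of $X$ starting at $y$: on $[\tau_{i-1},\tau_i]$ it coincides with $s\mapsto \Psi^{p_i}(s-\tau_{i-1},f_{i-1}(y))$, which is an integral curve of $X$ by the defining property of the local flow $\Psi^{p_i}$, and the junctions match by construction. The uniqueness statement of Lemma \ref{EinD} then forces this curve to coincide with $\gamma_y$ on its interval of definition, giving $F=\Phi^X$ on $W$ and in particular that $W\subseteq\Omega^X$ and that $\Phi^X$ is $C^r$ on $W$. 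The main obstacle is the careful bookkeeping of this piecewise construction: one must check that after possibly shrinking $V$, each $f_i$ maps $V$ uniformly into $U^{p_{i+1}}$, that the glued curve is indeed $C^1$ at the junctions $\tau_i$ so that Lemma \ref{EinD} applies on the whole interval rather than subinterval by subinterval, and that an open time-direction neighbourhood of $t_0$ still fits into the last interval $J^{p_n}$.
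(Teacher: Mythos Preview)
Your proof is correct. The flow identities are handled exactly as in the paper, via Lemma \ref{FlussLemma}. For openness and $C^r$-smoothness, however, the paper organizes the argument differently: instead of covering the compact arc $\gamma_{x_0}([0,t_0])$ by finitely many local-flow boxes and writing $\Phi^X$ as an $n$-fold composition, it runs a supremum argument. One fixes $x_0$, sets
\[
Q:=\bigl\{\,b>0:\text{for every }0\le t<b,\ \Phi^X\text{ is }C^r\text{ on some neighbourhood of }(t,x_0)\text{ contained in }\Omega^X\,\bigr\},
\]
and shows by contradiction that $\sup Q$ cannot lie in $I_{x_0}$: if it did, a single local flow near $\gamma_{x_0}(\sup Q)$ together with the already established identity (\ref{FlussGleich}) in the form $\Phi^X(t,x)=\Phi^X(t-t_1,\Phi^X(t_1,x))$ would push the ``good'' region strictly past $\sup Q$.

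Both arguments encode the same mechanism, namely propagation along the integral curve via the semigroup law. Your compactness-and-partition version is more explicit and constructs the $C^r$ neighbourhood as a concrete composition of local flows, at the price of the bookkeeping you flag (shrinking $V$ inductively, matching junctions). The paper's version, following Lang, is leaner: it needs only one composition step rather than $n$, and it feeds the already proved (\ref{FlussAequi}) back into the smoothness argument instead of reassembling an integral curve and invoking Lemma \ref{EinD} again.
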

\begin{proof}
The relation (\ref{FlussAequi}) and the equation (\ref{FlussGleich}) follow directly from Lemma \ref{FlussLemma}. It remains to show the openness of $\Omega$ and the differentiability-assertion of $\Phi$. 

First, we mention a trivial fact. For every $x \in M$ we find a symmetric interval $I$ and an $x$-neighbourhood $V \subseteq M$ such that $I \times V \subseteq \Omega$ and $\Phi$ is of class $C^r$ on $I \times V$. 

Let $x_0 \in M$ and 
\begin{align*} 
Q:= \left\{b>0: \left(\forall ~ 0 \leq t < b\right) ~ \left(\exists ~ (t,x_0)-\text{neighbourhood } W \subseteq \Omega\right) ~ \Phi|_W \text{ is } C^{r} \right\}.
\end{align*}
Obviously $Q \neq \emptyset$. We show $I_{x_0} \cap [0, \infty[ \subseteq Q$. 

First case: $Q \subseteq \mathbb{R}$ has no upper bound. Then obviously the assertion holds.

Second case: $Q$ has an upper bound. Defining $b:= \sup Q$, we show $b >I_{x_0}$. Assume $b \leq I_{x_0}$. Then $b \in I_{x_0}$. Now let $J \subseteq \mathbb{R}$ be an open symmetric interval and $V \subseteq M$ be a $\Phi(b,x_0)$-neighbourhood such that $J \times V \subseteq \Omega$ and $\Phi|_{J \times V}$ is of class $C^{r}$. Because $I_{x_0} \rightarrow M$, $\tau \mapsto \Phi(\tau, x_0)$ is continuous, we find $t_1 \in ]0,b[$ with $\Phi(t_1,x_0) \in V$ and $b-t_1 \in J$. Moreover we find an interval $I$ and a $x_0$-neighbourhood $U \subseteq M$ such that $I \times U \subseteq \Omega$ and $\Phi|_{I \times U}$ is of class $C^{r}$, because $t_1 \in Q$. After shrinking $I$ and $U$, we assume $\Phi(I\times U) \subseteq V$, because $\Phi(t_1,x_0) \in V$.

We define $J':= J+t_1$. For $t \in J'$ and $x \in U$, we have $(t-t_1 , \Phi(t_1,x)) \in J \times V \subseteq \Omega$ and so we get $(t,x)\in \Omega$ with (\ref{FlussAequi}). Hence $J'\times U \subseteq \Omega$. Moreover
\begin{align*}
\Phi(t,x) = \Phi(\underbracket{t-t_1}_{\in J} , \underbracket{\Phi(t_1,x)}_{\in V})
\end{align*}
for $t \in J'$ and $x \in U$ and $\Phi|_{J \times V}$ is a $C^{r}$-map. Thus $J' \times U \subseteq \Omega$ and $\Phi|_{J'\times U}$ is of class $C^{r}$. But we also have $b \in J'$ and $J'$ is open. This is a contradiction to $b = \sup Q$.\\
%
In the analogous way we see $I_{x_0} \cap ]-\infty,0] \subseteq Q$ and conclude $I_{x_0} \subseteq Q$.
\end{proof}


\begin{lemma}
If $X\colon M\rightarrow TM$ is a $C^{r-1}$-vector field that provides a local flow and $t\in \mathbb{R}$, then $\Phi_t^X(\Omega^X_t) = \Omega^X_{-t}$ and $\Phi_t^X \colon \Omega_t^X \rightarrow \Omega_{-t}^X$ is a diffeomorphism between open sets of $M$ with inverse function $\Phi_{-t}^X$.
\end{lemma}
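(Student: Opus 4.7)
The plan is to derive everything from Lemma \ref{FlussLemma} together with the properties of the global flow established in Theorem \ref{FlussCr}. First I would observe that $\Omega_t^X$ is open in $M$: since $\Omega^X \subseteq \mathbb{R}\times M$ is open by Theorem \ref{FlussCr}, the slice $\Omega_t^X$ is the preimage of $\Omega^X$ under the continuous map $M\to \mathbb{R}\times M$, $x\mapsto (t,x)$. The same argument applies to $\Omega_{-t}^X$. Moreover, $\Phi_t^X\colon \Omega_t^X \to M$ is $C^r$ as the composition of $x\mapsto (t,x)$ with the $C^r$-map $\Phi^X$.

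Next I would show the inclusion $\Phi_t^X(\Omega_t^X)\subseteq \Omega_{-t}^X$. Pick $x\in \Omega_t^X$, which by definition means $t\in I_x$. Setting $y:=\Phi_t^X(x)=\gamma_x(t)$, Lemma \ref{FlussLemma} yields $I_y = I_x - t$. Since $0\in I_x$, we obtain $-t\in I_y$, hence $y\in \Omega_{-t}^X$, establishing the inclusion.

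Then I would verify the inverse identity $\Phi_{-t}^X\circ \Phi_t^X = \id_{\Omega_t^X}$. For $x\in \Omega_t^X$ and $y=\Phi_t^X(x)$, Lemma \ref{FlussLemma} gives
\begin{align*}
\Phi_{-t}^X(y) \;=\; \gamma_y(-t) \;=\; \gamma_x(-t+t) \;=\; \gamma_x(0) \;=\; x.
\end{align*}
By symmetry (applying the same reasoning with $-t$ in place of $t$), we obtain $\Phi_t^X(\Omega_{-t}^X)\subseteq \Omega_t^X$ and $\Phi_t^X\circ \Phi_{-t}^X = \id_{\Omega_{-t}^X}$. The two inclusions, combined with the identities, force the equality $\Phi_t^X(\Omega_t^X)=\Omega_{-t}^X$ and exhibit $\Phi_t^X$ and $\Phi_{-t}^X$ as mutually inverse bijections.

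Finally, since both $\Phi_t^X$ and $\Phi_{-t}^X$ are $C^r$-maps between open subsets of $M$ and are inverse to one another, $\Phi_t^X\colon \Omega_t^X\to \Omega_{-t}^X$ is a $C^r$-diffeomorphism, as claimed. There is no real obstacle here, since all the heavy work, namely openness of $\Omega^X$, smoothness of $\Phi^X$, and the flow equation on maximal integral curves, has already been carried out in Theorem \ref{FlussCr} and Lemma \ref{FlussLemma}; the present lemma is essentially a bookkeeping corollary.
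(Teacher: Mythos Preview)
Your proof is correct and follows essentially the same route as the paper's: the paper phrases the key step via the equivalence $(s,\Phi^X(t,x))\in\Omega^X \Leftrightarrow (t+s,x)\in\Omega^X$ from Theorem \ref{FlussCr}, while you go directly to its source, Lemma \ref{FlussLemma}, but the content is identical. Your version is in fact more complete, since you spell out the openness of $\Omega_t^X$ and the $C^r$-smoothness of $\Phi_t^X$, which the paper's proof leaves implicit.
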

\begin{proof}
It is sufficient to show $\Phi^X_t(\Omega_t^X) = \Omega_{-t}^X$. To this end let $x \in \Omega_t$. From (\ref{FlussAequi}) we know $(-t,\Phi_t(x)) \in \Omega^X \Leftrightarrow (0,x)\in \Omega^X$. Because the latter is true we get the Inclusion "$\subseteq$". Given $(-t, x)\in \Omega^X$ we get $(t, \Phi_{-t}(x))\in \Omega^X$ with (\ref{FlussAequi}). Moreover we have $x = \Phi_t^X(\Phi_{-t}(x))$.  
\end{proof}

\begin{definition}\label{Param}
Let $P$ be a locally convex space.
\begin{compactenum}[(a)]
\item If $\Omega \subseteq \mathbb{R}\times M \times P$ is open and $\Phi \colon \Omega \rightarrow M$ is a $C^r$-map such that 
\begin{compactenum}[(i)]
\item $\{0\} \times M \times P \subseteq \Omega$
\item $\Phi(0,x,p) = x$ for all $x \in M$ and $p \in P$.
\item For all $x \in M$, $p \in P$ we find a symmetric interval $I$, a $x$-neighbourhood $U$ and a $p$-neighbourhood $V$ such that $I \times U \times V \subseteq \Omega$, $I \times \bigcup_{p \in P} \Phi(I \times U \times \{p\}) \times \{p\} \subseteq \Omega$ and $\Phi(t,\Phi(s,x,p),p) = \Phi(t+s,x,p)$,
\end{compactenum}
then we call $\Phi$ a {\it $C^r$-$\mathbb{R}$-action on $M$ with parameters}. Obviously $\Phi \colon \Omega \rightarrow M$ is a  $C^r$-$\mathbb{R}$-action on $M$ with parameters if and only if, $\tilde{\Phi} \colon \Omega \rightarrow M \times P$, $(t,x,p) \mapsto (\Phi(t,x,p),p)$ is a local $C^r$-action on $M \times P$.
\item A $C^{r-1}$-map $X \colon M \times P \rightarrow TM$ is called {\it vector field with parameters}, if $X(\sbull,p) \in \mathcal{V}(M)$ for all $p \in P$.
We say that {\it $X$ provides a local flow with parameters}, if we find a local $C^r$-$\mathbb{R}$-action on $M$ with parameters such that $\frac{\partial}{\partial t}\big|_{t=0} \Phi(t,x,p)=X(x,p)$. Obviously the vector field $X(\sbull,p)$ provides a local flow for all $p \in P$. Moreover it is clear that $X$ provides a local flow with parameters if and only if $\tilde{X} \colon M \times P \rightarrow T(M \times P) = TM \times P \times P$, $(x,p) \mapsto (X(x,p),p,0)$ provides a local flow. 
\item Given is a vector field with parameters $X \colon M \times P \rightarrow M$ that provides a local flow with parameters, we define $I_{x,p} := \bigcup_\gamma I_\gamma$, where the union is taken over all integral-curves $\gamma \colon I_\gamma \rightarrow M$ of $X(\sbull,p)$ with $0\in I_\gamma$ and $\gamma(0) = x$. Then 
$\gamma_{x,p} \colon I_{x,p} \rightarrow M,~ t  \mapsto \gamma(t)$ for  $t\in I_\gamma$ 
is a well-defined map and an integral-curve for $X(\sbull,p)$. We call $\gamma_{x,p}$ the \emph{maximal integral-curve} of $X$ with parameter $p$ that maps $0$ to $x$. Obviously $\gamma\colon I\rightarrow M$ is the maximal integral curve for $X$ with parameter $p$ that maps $0$ to $x$ if and only if $\tilde{\gamma} \colon I \rightarrow M \times P$, $t \mapsto (\gamma(t),p)$ is the maximal integral curve for $\tilde{X}$ that maps $0$ to $(x,p)$.
\item We set $\Omega^X := \bigcup_{x\in M, p \in P} I_{x,p} \times \{x\} \times \{p\} \subseteq \mathbb{R} \times M \times P$ and define the map $\Phi^X \colon \Omega^X \rightarrow M$, $(t,x,p) \mapsto \gamma_{x,p}(t)$ that we call the \emph{global flow with parameters} for the vector field $X$. 

With the Definitions \ref{GlobFluss} and \ref{Param} we conclude that $\Omega^X$ is open in $\mathbb{R} \times M \times P$ and that $\Phi^X$ is a $C^r$-map. Moreover given $(t,x,p) \in \Omega^X$ and $s\in \mathbb{R}$
\begin{align}\label{FlussAequi}
(s, \Phi^X(t,x,p),p) \in \Omega^X \Leftrightarrow (t+s,x,p) \in \Omega^X,
\end{align}
and in this situation
\begin{align}
\Phi^X (s , \Phi^X(t,x,p),p) = \Phi^X (t+s,x,p).\label{FlussGleich}
\end{align}
\end{compactenum}
\end{definition}

%
%

\end{document}